\numberwithin{equation}{section}
\newtheorem{theorem}{Theorem}[section]
\newtheorem{prop}[theorem]{Proposition}
\newtheorem{lemma}[theorem]{Lemma}
\newtheorem{cor}[theorem]{Corollary}
\theoremstyle{definition}
\newtheorem{definition}[theorem]{Definition}
\newtheorem{remark}[theorem]{Remark}
\newcommand{\R}{\mathbb{R}}
\newcommand{\wt}[1]{\widetilde{#1}}
\def\Xint#1{\mathchoice
  {\XXint\displaystyle\textstyle{#1}}%
  {\XXint\textstyle\scriptstyle{#1}}%
  {\XXint\scriptstyle\scriptscriptstyle{#1}}%
  {\XXint\scriptscriptstyle\scriptscriptstyle{#1}}%
  \!\int}
\def\XXint#1#2#3{{\setbox0=\hbox{$#1{#2#3}{\int}$}
    \vcenter{\hbox{$#2#3$}}\kern-.5\wd0}}
\def\avgint{\Xint-}%
\DeclareMathOperator*{\esssup}{ess\,sup}  
\let \O=\Omega
\let \ve=\varepsilon
\begin{document}

  \keywords{Space of homogeneous type, Maximal functions, Calder\'on--Zygmund operators, Commutators, Muckenhoupt weights, John--Nirenberg inequality}
  \subjclass[2010]{Primary: 42B20, 42B25. Secondary: 43A85.}

  \title[Weighted inequalities in SHT]{Calder\'on--Zygmund operators and commutators in Spaces of Homogeneous Type: weighted inequalities}

  \date{\today}

  \author[T.C. Anderson]{Theresa C. Anderson}
  \address{Theresa C. Anderson\\
           Department of Mathematics, Brown University, Providence, RI 02912, USA}
  \email{tcanderson@math.brown.edu}

  \author[W. Dami\'an]{Wendol\'in Dami\'an}
  \address{Wendol\'in Dami\'an\\
  Departamento de An\'alisis Matem\'atico, Facultad de Matem\'aticas,
  Universidad de Sevilla, 41080 Sevilla, Spain}
  \email{wdamian@us.es}

  \thanks{The first author is supported by an NSF graduate fellowship. The second author is supported by Junta de Andaluc\'ia (Grant No. P09-FQM-4745) and the Spanish Ministry of Science and Innovation (MTM2012-30748).}

%
%
  \begin{abstract}
     The recent proof of the sharp weighted bound for Calder\'on-Zygmund operators has led to much investigation in sharp mixed bounds for operators and commutators, that is, a sharp weighted bound that is a product of at least two different $A_p$ weight constants.  The reason why these are sought after is that the product will be strictly smaller than the original one-constant bound.  We prove a variety of these bounds in spaces of homogeneous type, using the new techniques of Lerner, for both operators and commutators.
  \end{abstract}
%
%
  \maketitle
%


  \section{Introduction}\label{Sect1}

In the last decades, harmonic analysts have paid much attention to the area of weighted inequalities for singular integrals.  Since Muckenhoupt introduced in \cite{Mu} the $A_p$ classes in the 1970's to answer the necessary and sufficient conditions for the boundedness of the maximal function on weighted $L^p$ spaces, that is for $1<p<\infty$
 \begin{equation}
 \|Mf\|_{L^p(w)}\leq C\|f\|_{L^p(w)}
 \end{equation}
 if and only if $w\in A_p$,

 \begin{equation}
   [w]_{A_p} = \sup_Q\avgint_Qw\left( \avgint_Qw^{1-p'}\right) ^{p-1} <\infty,
 \end{equation}

 many have sought a deeper understanding of the constants present in such weighted bounds.

\par The first result in this area was due to Buckley, who asserted the precise dependence of the $A_p$ constant in \cite{Bu} was
\begin{equation} \label{buckley}
\|M\|_{L^p(w)}\leq c_p[w]_{A_p}^{\frac{1}{p-1}}.
 \end{equation}
Since then, much work was put into solving the so-called $A_2$ conjecture, which stated that the constant in the corresponding weighted norm inequality for Calder\'on--Zygmund singular integrals depended linearly on the $A_2$ constant.  From there, one could extrapolate to get the $A_p$ dependence.  This open question was recently solved by Hyt\"onen in \cite{Hyt1}. See also \cite{Hyt2} for a survey about the history of the conjecture and \cite{L} for  a simpler proof.

\par  After the solution of the $A_2$ conjecture, an improvement of this result was obtained in \cite{HP}. This new result can be better understood if we first consider the case of Buckley's estimate \eqref{buckley} for the maximal function. For the case $p=2$, the maximal estimate is given by
\begin{equation*}
\|M\|_{L^2(w)}\leq c_n\, [w]_{A_2}.
\end{equation*}
The idea is to replace a portion of the $A_2$ constant by another smaller constant defined
in terms of the $A_{\infty}$ constant given by the functional
\begin{equation}\label{FW}
[w]_{A_{\infty}} = \sup_{Q} \frac{1}{w(Q)}\int_Q M(w\chi_Q).
\end{equation}
To be more precise, the improvement of Buckley's theorem  is the following ``$A_2-A_{\infty}$" estimate
\begin{equation*}
\|M\|_{L^2(w)}\leq c_n\, [w]_{A_2}^{1/2}\, [w^{-1}]_{A_\infty}^{1/2}
\end{equation*}
which can be found in \cite{HP} along with its $L^p$ counterpart.  The $A_{\infty}$ constant  as given by \eqref{FW} was originally introduced  by Fujii in \cite{Fu} and rediscovered later by Wilson in \cite{Wil1}. This definition is more suitable than the more classical condition due to Hrus\v{c}\v{e}v \cite{Hru}, which is defined by the expression
\begin{equation*}
[w]_{A_{\infty}}^{H}= \sup_{Q} \left(\frac{1}{|Q|}\int_Q w(t)dt\right)exp\left(\frac{1}{|Q|}\int_Q \log w(t)^{-1}dt\right),
\end{equation*}
since
$$[w]_{A_\infty}\leq c_n\,[w]^{H}_{A_\infty}, $$
as it was observed in \cite{HP}.  In fact it is shown in the same paper with explicit examples  that  $[w]_{A_\infty}$ is much smaller (actually exponentially smaller) than $[w]^{H}_{A_\infty}$.

Considering the case of singular integrals, the mixed sharp $A_2-A_{\infty}$ result below obtained in \cite{HP}
improves the $A_2$ theorem:
\begin{equation*}
\|T\|_{L^2(w)}\leq c_n\, [w]_{A_2}^{1/2}\, ( [w^{-1}]_{A_\infty}^{1/2} + [w]_{A_\infty}^{1/2}).
\end{equation*}
This is the right estimate when compared with $M$ since this reflects the property that $T$ is (essentially) self-adjoint.

In this paper we follow this idea of replacing a portion of the $A_p$ constant by
the $A_\infty$ constant for the problems considered in \cite{LOP1} and improved in \cite{LOP2}, within the context of spaces of homogeneous type.

To do this we will prove first the following proposition which follows essentially from \cite{LOP2}, but in this stated form can be found in \cite{Per1}.

\begin{theorem}
  Let $T$ be a Calder\'on--Zygmund operator and let $1<p<\infty$. Then for any weight $w$ and $r>1$,

  \begin{equation}\label{LinearGrowthIneq}
    ||Tf||_{L^p(w)} \leq C p p' (r')^{\frac{1}{p'}} ||f||_{L^p(M_r w)}.
  \end{equation}

  \end{theorem}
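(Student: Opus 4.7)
The approach is to combine the Lerner-type sparse (pointwise) domination of $T$ in spaces of homogeneous type developed earlier in the paper with a sharp two-weight estimate for the resulting sparse operator adapted to the pair $(w, M_r w)$. The sparse domination provides, for some sparse family $\Ss$ (possibly a finite union of such),
\[
|Tf(x)| \leq C \sum_{Q \in \Ss} \Bigl(\avgint_Q |f|\Bigr) \chi_Q(x) \;=:\; C\, \A_\Ss f(x),
\]
so it suffices to prove the stated inequality with $T$ replaced by $\A_\Ss$.

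I would then dualize: for $g \geq 0$ with $\|g\|_{L^{p'}(w)}=1$,
\[
\int (\A_\Ss f)\, g\, w \, d\mu \;=\; \sum_{Q\in\Ss} \Bigl(\avgint_Q |f|\Bigr) \int_Q g\, w\, d\mu.
\]
On each cube Hölder's inequality is applied twice. The key point is the pointwise comparison
\[
\avgint_Q w \;\leq\; \Bigl(\avgint_Q w^r\Bigr)^{1/r} \;\leq\; M_r w(x), \qquad x \in Q,
\]
which after the weight factors cancel produces the clean cube-wise bound
\[
\Bigl(\avgint_Q |f|\Bigr) \int_Q g\, w \;\leq\; \left(\int_Q |f|^p M_r w\, d\mu\right)^{\!1/p} \left(\int_Q g^{p'} w\, d\mu\right)^{\!1/p'}.
\]

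The main step is summing this over $\Ss$. It is handled by a Carleson-embedding / principal-cube argument exploiting the sparseness $\mu(Q) \leq 2\mu(E_Q)$ with pairwise disjoint $E_Q \subset Q$, together with the fact that $M_r w \in A_1$ with $[M_r w]_{A_1} \leq C r'$. The factor $pp'$ arises from the sharp $L^p$ Carleson embedding constant, while $(r')^{1/p'}$ emerges on the $p'$-side of the split from the $A_\infty$-type information about $M_r w$ (equivalently, from the sharp $L^r$ boundedness of $M$ with constant $\sim r'$, raised to the power $1/p'$).

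The main obstacle is precisely this summation: a naive application of Hölder's inequality to $\sum_Q F(Q)^{1/p}G(Q)^{1/p'}$ would introduce the multiplicity $\sum_{Q\in\Ss}\chi_Q$, which is unbounded. The correct device is a principal-cube iteration on $\Ss$ in which each point lies in essentially one principal cube, at the cost of a geometric error controlled by sparseness, combined with the sharp-constant Carleson embedding and reverse Hölder inequalities in the SHT setting established in the earlier sections of the paper. This yields exactly the constants $pp'$ and $(r')^{1/p'}$ appearing in \eqref{LinearGrowthIneq}, with no extra polynomial dependence on $p$, $p'$, or $r'$.
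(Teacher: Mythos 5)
Your starting point (sparse domination) matches the paper's, but after that your route diverges substantially: the paper does \emph{not} attack the sparse sum directly. Instead it first proves the Coifman--Fefferman inequality $\int_X|Tf|\,w\,d\mu\leq C[w]_{A_p}\int_X Mf\,w\,d\mu$ via the sparse decomposition, then runs a Rubio de Francia iteration (together with the Coifman--Rochberg theorem $[(Mh)^{1/r}]_{A_1}\lesssim r'$) to prove the intermediate two-weight inequality
\[
\|Tf\|_{L^p((M_r w)^{1-p})}\leq Cp\,\|Mf\|_{L^p((M_r w)^{1-p})},
\]
applies this to $T^*$, dualizes, and finishes with a Hölder argument with exponent $pr$ plus the unweighted $L^{p'/(pr)'}$ maximal theorem. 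In that scheme the factor $p$ comes from the Rubio de Francia step, $p'$ and $(r')^{1/p'}$ come from the unweighted maximal theorem applied at the carefully chosen exponent.

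Your cube-wise Hölder estimate
\[
\Bigl(\avgint_Q |f|\Bigr)\int_Q g\,w\;\leq\;\Bigl(\int_Q |f|^p M_r w\Bigr)^{1/p}\Bigl(\int_Q g^{p'} w\Bigr)^{1/p'}
\]
is correct — the $(M_r w)$-factors do cancel against $w(Q)^{1/p}$ exactly as you say. The genuine gap is the summation. You acknowledge that a naive Hölder in $Q$ introduces unbounded overlap and invoke a ``principal-cube iteration'' together with ``the sharp $L^p$ Carleson embedding constant'' and ``$A_\infty$-type information about $M_r w$,'' but none of this is carried out, and the bookkeeping you sketch does not obviously close. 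In particular, the $A_1/A_\infty$ information about $M_r w$ lives on the $A_Q=\int_Q|f|^p M_r w$ side, which is raised to the power $1/p$, so the natural output of that line of reasoning is $(r')^{1/p}$, not the required $(r')^{1/p'}$ (and $(r')^{1/p}>(r')^{1/p'}$ for $p<2$, so the difference matters). Nothing in your outline explains how $r'$ migrates to the $p'$-side: $B_Q=\int_Q g^{p'}w$ involves $w$, not $M_r w$, and the passage from $\avgint_Q w$ to $M_r w$ in the cancellation step is Jensen, which carries no $r'$. So as written the crucial step — the sparse summation with the claimed sharp dependence $pp'(r')^{1/p'}$ — is an unproved assertion, not a proof. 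The paper's detour through Coifman--Fefferman, Rubio de Francia, $T^*$-duality, and the unweighted maximal theorem is precisely what produces those constants, and it is not clear your direct sparse route recovers them without essentially reconstructing that machinery inside the principal-cube argument.
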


This mixed theorem leads to sharp mixed $A_1-A_{\infty}$ weighted bounds for Calder\'on--Zygmund operators using simple properties of $A_p$ weights:

       \begin{equation}\label{LinearGrowthCor1}
        ||Tf||_{L^p(w)} \leq C p p' [w]_{A_{\infty}}^{1/p'} ||f||_{L^p(Mw)},
     \end{equation}
  and if $w\in A_1$,

     \begin{equation}\label{LinearGrowthCor2}
      ||Tf||_{L^p(w)} \leq C p p' [w]_{A_{\infty}}^{1/p'} [w]_{A_1}^{1/p} ||f||_{L^p(w)},
     \end{equation}
  where $C$ is a dimensional constant that also depends on $T$.

\par  Recent results relating to the simple proof of the $A_2$ conjecture have allowed us not only to simplify and streamline the proof of this theorem, but to also extend it to the versatile spaces of homogeneous type.  This extension is due to the fact that every Calder\'on--Zygmund operator is bounded from above by a supremum of sparse operators in spaces of homogeneous type.  A sparse operator is an averaging operator over a sparse family of cubes of the following form,
\[T^S f = \sum_{Q\in S}(\avgint_Q f)\chi_Q,\] where a sparse family $S$ is defined by the property that if $Q\in S$,
\[\mu( \bigcup_{Q'\subsetneq Q, \, Q'\in S})\leq \frac{\mu(Q)}{2}.\]  Then the decomposition \[\|Tf\|_X\leq \sup_{S,D}\|T^Sf\|_X,\] where $X$ is a Banach function space, and $D$ a dyadic grid, was shown to hold in $\R^n$ by Lerner and was key in his proof of many weighted conjectures \cite{L}, as it was in the extension of these results to spaces of homogeneous type \cite{ACM} and \cite{AV}.

\par A similar approach can also be done with sharp weak bounds.  We have the following sharp bound of Lerner, Ombosi and P\'erez in \cite{LOP2},

 \[\|T\|_{L^p(w)}\leq Cpp'[w]_{A_1},\]

 which led to the following endpoint estimates in \cite{HP}. Namely,

  \begin{enumerate}
    \item If $w\in A_{\infty}$

      \begin{equation*}
        ||Tf||_{L^{1,\infty}(w)} \leq C \log{(e+[w]_{A_{\infty}})} ||f||_{L^1(M_r w)}.
      \end{equation*}

    \item If $w\in A_1$

      \begin{equation*}
        ||Tf||_{L^{1,\infty}(w)} \leq C  [w]_{A_1} \log{(e+[w]_{A_{\infty}})} ||f||_{L^1(w)}.
      \end{equation*}

  \end{enumerate}

  Again, we can update this proof by applying recent estimates in sharp weighted theory as well as extend to spaces of homogeneous type.  We will employ a new sharp reverse H\"older inequality for spaces of homogeneous type (see Lemma \ref{SRHI}).  By a Calder\'on-Zygmund decomposition and an estimate of the maximal function that comes about from the use of sparse operators, we can arrive at the result.

\par We are also able to extend the sharp $L^p$ and $A_q$ bounds from Duoandikoetxea to spaces of homogeneous type in \cite{Duo} as it is shown in the following result.

\begin{cor}
Let $T$ be an operator such that
  \begin{equation*}
    ||Tf||_{L^p(w)} \leq C N([w]_{A_1}) ||f||_{L^p(w)}
  \end{equation*}
  for all weights $w\in A_1$ and all $1<p<\infty$, with $C$ independent of $w$. Then we have

  \begin{equation}\label{Ap_Duo_estimate}
    ||Tf||_{L^p(w)} \leq C N([w]_{A_q}) ||f||_{L^p(w)}
  \end{equation}

  for all $w\in A_q$ and $1\leq q < p < \infty$, with $C$ independent of $w$,
  \end{cor}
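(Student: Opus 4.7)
The plan is to run a Rubio de Francia extrapolation argument that trades a gain of one weight class ($A_1\rightsquigarrow A_q$) for a loss in the Lebesgue exponent ($p_0\rightsquigarrow p$). The case $q=1$ is nothing but the hypothesis, so fix $1<q<p<\infty$ and $w\in A_q$, and set $p_0:=p/q\in(1,\infty)$ and $s:=p/p_0=q$. First I would unfold the norm by duality of the $L^s(w)$ space applied to $(Tf)^{p_0}$:
\[
\|Tf\|_{L^p(w)}^{p_0}=\bigl\||Tf|^{p_0}\bigr\|_{L^s(w)}=\sup_{g}\int (Tf)^{p_0}\,g\,w\,d\mu,
\]
where the supremum runs over $g\geq 0$ with $\|g\|_{L^{s'}(w)}\le 1$. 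It then suffices to dominate each such integral by $C\,N([w]_{A_q})^{p_0}\|f\|_{L^p(w)}^{p_0}$.

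The key step is the Rubio de Francia iteration built out of the operator $\Mm h:=M(hw)/w$, where $M$ is the Hardy--Littlewood maximal operator on the SHT. Since $w\in A_s$, the standard duality identity $[w^{1-s'}]_{A_{s'}}=[w]_{A_s}^{1/(s-1)}$ together with Buckley's bound for $M$ (available on SHT) gives
\[
\|\Mm\|_{L^{s'}(w)}=\|M\|_{L^{s'}(w^{1-s'})}\le C\,[w]_{A_s}^{1/((s-1)(s'-1))}=C\,[w]_{A_q},
\]
since $(s-1)(s'-1)=1$. Setting $\mathcal{R}g:=\sum_{k=0}^\infty \Mm^k g/(2\|\Mm\|_{L^{s'}(w)})^k$, one has $g\le\mathcal{R}g$, $\|\mathcal{R}g\|_{L^{s'}(w)}\le 2$, and $\Mm(\mathcal{R}g)\le 2\|\Mm\|_{L^{s'}(w)}\,\mathcal{R}g$. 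The last inequality rewrites as $M(\mathcal{R}g\,w)\le C[w]_{A_q}\,\mathcal{R}g\,w$, so $W:=\mathcal{R}g\cdot w$ lies in the unweighted class $A_1$ with $[W]_{A_1}\le C[w]_{A_q}$.

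With $W$ in hand I would invoke the hypothesis at the exponent $p_0$ with this $A_1$-weight, and then Hölder with exponents $s$ and $s'$ to return to $\|f\|_{L^p(w)}$:
\[
\int(Tf)^{p_0}g\,w\le\int(Tf)^{p_0}W\le C\,N([W]_{A_1})^{p_0}\!\int f^{p_0}W\le C\,N\!\bigl(C[w]_{A_q}\bigr)^{p_0}\cdot 2\|f\|_{L^p(w)}^{p_0},
\]
where the last inequality uses $\int f^{p_0}\mathcal{R}g\,w\le\|f^{p_0}\|_{L^s(w)}\|\mathcal{R}g\|_{L^{s'}(w)}\le 2\|f\|_{L^p(w)}^{p_0}$. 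Taking the supremum over $g$, extracting the $p_0$-th root, and absorbing the multiplicative constant inside $N$ into the prefactor (by the monotonicity of $N$ and its growth behavior in the regimes of interest) delivers $\|Tf\|_{L^p(w)}\le C\,N([w]_{A_q})\,\|f\|_{L^p(w)}$.

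The main obstacle is the choice of the Rubio de Francia maximal operator: the naive choice $M$ fails because $w$ need not belong to $A_{s'}$, so one must use $\Mm h=M(hw)/w$ to arrange both the $L^{s'}(w)$-boundedness with quantitative control by $[w]_{A_q}$ \emph{and} the key identity that $\mathcal{R}g\cdot w$ lies in the unweighted $A_1$ class with constant controlled by $[w]_{A_q}$. The remaining ingredients---the duality pairing, Hölder's inequality, and the quantitative $L^{s'}(w^{1-s'})$-boundedness of $M$ for doubling measures---are by now standard on spaces of homogeneous type.
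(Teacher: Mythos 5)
Your proof is correct and reproduces the argument of Duoandikoetxea that the paper cites: the duality step lowering the exponent to $p_0=p/q$, the weighted Rubio de Francia iteration built from $\mathcal{M}h=M(hw)/w$ so that $\mathcal{R}g\cdot w\in A_1$ with constant $\lesssim[w]_{A_q}$ by the sharp Buckley bound on spaces of homogeneous type, and H\"older's inequality to close. This is exactly the intended route, as the paper's own proof of this corollary is a one-line reference to Duoandikoetxea's argument.
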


  Finally, we show sharp weighted bounds for commutators of Calder\'on--Zygmund operators with functions in $BMO$ and their iterates. These questions have been considered before in \cite{CPP} and lately improved  in  \cite{HP} but our bounds are new in the context of spaces of homogeneous type. Moreover, we adapt the necessary lemmas to our situation, such as the sharp John-Nirenberg inequality.

  \par The organization of this paper will be as follows. In Section \ref{Sect2} we give some background and definitions which will help us to prove our main results, listed in Section \ref{Sect3}. Finally, Section \ref{Sect4} contains all the proofs as well as some remarks.


  \section{Preliminaries}\label{Sect2}

      \subsection{Spaces of homogeneous type}

We will be working on spaces of homogeneous type, which generalize the Euclidean situation of $\R^n$ with Lebesgue measure.  Other examples of spaces of homogeneous type include $C^{\infty}$ compact Riemannian manifolds, graphs of Lipschitz functions and Cantor sets with Hausdorff measure.  These and more examples are described in \cite{C}; some applications of these spaces can be found in \cite{MM,Y}.

\begin{definition}

A space of homogeneous type is an ordered triple $(X, \rho, \mu)$ where $X$ is a set, $\rho$ is a quasimetric, that is:

\begin{enumerate}
  \item $\rho(x,y)=0$ if and only if $x=y$.
  \item $\rho(x,y)=\rho(y,x)$ for all $x,y \in X$.
  \item $\rho(x,z)\leq \kappa (\rho(x,y)+\rho(y,z))$, for all $x,y,z\in X$.
\end{enumerate}

for some constant $\kappa>0$ (quasimetric constant), and the positive measure $\mu$ is doubling, that is

$$0< \mu(B(x_0,2r))\leq D_{\mu} \mu(B(x_0,r))<\infty,$$

for some constant $D_{\mu}$ (doubling constant).

\end{definition}

We will say that a constant is absolute if it only depends on the space $(X,\rho,\mu)$. Particularly, $\kappa$ and $D_{\mu}$ appearing in the above definition are absolute constants.

Thankfully, many basic constructions and tools for classical harmonic analysis still exist in some form in spaces of homogeneous type, such as certain covering lemmas.  We will especially use the Lebesgue Differentiation Theorem, very recently shown to hold in spaces of homogeneous type in \cite{ACM} where the usual standard assumptions have been removed.

We also rely on a dyadic grid decomposition, and use the new construction of \cite{HK}.

\begin{theorem}[\cite{HK}]\label{dyadic} There exists a family of sets  $D=\cup_{k\in \mathbb{Z}}D_k$, called a dyadic decomposition of $X$, constants $0<C,\epsilon <\infty$, and a corresponding family of points $\lbrace x_c(Q)\rbrace_{Q\in D}$ such that:

  \begin{enumerate}
    \item $X=\bigcup_{Q\in D_k}Q$, for all $k\in \mathbb{Z}$.
    \item If $Q_1\cap Q_2 \neq \emptyset$, then $Q_1\subseteq Q_2$ or $Q_2\subseteq Q_1$.
    \item For every $Q\in D_k$ there exists at least one child cube $Q_c\in D_{k-1}$ such that $Q_c\subseteq Q$.
    \item For every $Q\in D_k$ there exists exactly one parent cube $\hat{Q}\in D_{k+1}$ such that $Q\subseteq \hat{Q}$.
    \item If $Q_2$ is a child of $Q_1$ then  $\mu(Q_2)\geq \epsilon \mu(Q_1)$.
    \item $B(x_c(Q), \delta^k)\subset Q \subset B(x_c(Q), C\delta^k)$.
  \end{enumerate}

\end{theorem}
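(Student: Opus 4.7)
The strategy is to adapt Christ's classical construction of dyadic cubes on a space of homogeneous type, with the refinements from \cite{HK} that yield the inner and outer ball property in item~(6) without any extra geometric assumption on $(X,\rho,\mu)$. The parameter $\delta\in(0,1)$ will be chosen small enough in terms of the quasimetric constant $\kappa$; once $\delta$ is fixed, $C$ and $\epsilon$ will be absolute constants depending on $\kappa,D_\mu,\delta$.

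The first step is to build the centers. For each $k\in\Z$, use Zorn's lemma (or a transfinite / countable greedy procedure if $X$ is separable, which is a consequence of doubling for the sets of interest) to choose a maximal collection $\{x_\alpha^k\}_{\alpha\in A_k}\subset X$ that is $\delta^k$-separated, i.e.\ $\rho(x_\alpha^k,x_\beta^k)\geq \delta^k$ for $\alpha\neq\beta$. Maximality then forces $X=\bigcup_\alpha B(x_\alpha^k,\delta^k)$. The next step is to impose a parent/child relation: to each finer center $x_\beta^{k-1}$ assign a unique parent $x_{\alpha(\beta)}^{k}$ among the $k$-th level centers at minimal $\rho$-distance (ties resolved by a fixed well-ordering of $A_k$). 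Iterating this map yields, for each $k$ and each $\alpha\in A_k$, the tree of descendants of $x_\alpha^k$.

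Now define the preliminary cubes: let $\widetilde Q_\alpha^k$ be the union, over all $j\leq k$, of the balls $B(x_\beta^j,\delta^j)$ whose center lies in the descendant tree of $x_\alpha^k$. By construction the families $\{\widetilde Q_\alpha^k\}_\alpha$ are pairwise almost disjoint at each scale, tile $X$, and are nested across scales, so one obtains (1)--(4). Property~(6) comes by checking inductively that
\[
B(x_\alpha^k,c_1\delta^k)\ \subset\ \widetilde Q_\alpha^k\ \subset\ B(x_\alpha^k,c_2\delta^k),
\]
where $c_1$ and $c_2$ depend only on $\kappa$ and $\delta$; the inner inclusion follows from the fact that any point in $B(x_\alpha^k,c_1\delta^k)$ is closer to $x_\alpha^k$ than to any other $k$-th level center (this is where one chooses $\delta$ small enough relative to $\kappa$), and the outer inclusion is obtained by bounding the diameter of the descendant tree using the triangle inequality for $\rho$ and a geometric-series summation in $\delta^j$. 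Finally (5) is a consequence of the doubling condition: $\mu(Q)\approx \mu(B(x_\alpha^k,\delta^k))$ by (6), and the same for the child, so the ratio of measures is controlled by finitely many applications of the doubling inequality. The cubes $Q$ in the theorem are then obtained from $\widetilde Q_\alpha^k$ by suitably adjusting the boundary points on a null set to obtain exact (not just almost-everywhere) tiling; since every estimate we care about is up to $\mu$-null sets, this adjustment is harmless.

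The main obstacle is the subtle interplay in the inner ball inclusion: one must choose the separation parameter $\delta$ and the inner radius $c_1\delta^k$ so that the quasitriangle inequality $\rho(x,y)\leq \kappa(\rho(x,x_\alpha^k)+\rho(x_\alpha^k,y))$ still forces the assignment of $x\in B(x_\alpha^k,c_1\delta^k)$ to $x_\alpha^k$ as the nearest center at every finer scale below~$k$, uniformly in the whole descendant chain. Once the quantitative dependence $c_1 = c_1(\kappa,\delta)$ is pinned down (and one verifies $c_1>0$ for $\delta$ small), the rest of the properties follow by the iterative/inductive arguments sketched above.
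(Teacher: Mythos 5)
The paper itself offers no proof of this theorem---it is stated as a citation of Hyt\"onen--Kairema \cite{HK}, so there is no ``paper's own proof'' to compare against line-by-line. I will therefore evaluate your sketch against the actual construction in \cite{HK} (which refines Christ \cite{C}).

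Your high-level strategy (maximal $\delta^k$-separated nets at each scale, a nearest-center parent map, cubes built from descendant trees, inner/outer ball inclusions by choosing $\delta$ small relative to $\kappa$, and then property (5) from (6) plus doubling) is indeed the Christ/Hyt\"onen--Kairema template. However, there is a genuine gap in the way you pass from the preliminary sets $\widetilde Q_\alpha^k$ to the actual cubes. The families $\{B(x_\alpha^k,\delta^k)\}_\alpha$ cover $X$ with bounded overlap but are emphatically not disjoint; consequently the preliminary sets $\widetilde Q_\alpha^k$ at a fixed scale $k$ overlap on sets of \emph{positive} measure, not on boundaries of measure zero. Your sentence that they ``are pairwise almost disjoint at each scale, tile $X$, and are nested across scales, so one obtains (1)--(4)'' is therefore not correct as stated: with positive-measure overlaps, property (2) fails outright, and the overlaps cannot be dismissed as a $\mu$-null ``boundary adjustment.'' Making the same-scale cubes genuinely pairwise disjoint while simultaneously preserving the nesting (3)--(4) across scales and the two-sided ball inclusion (6) is in fact the technical heart of the construction, not a harmless afterthought.

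Concretely, what \cite{HK} does that your sketch omits is: (i) they arrange the nets so that the centers themselves are \emph{nested} across scales, $\{x_\alpha^{k}\}\subset\{x_\beta^{k-1}\}$ for every $k$, which Christ also uses and which your ``independent maximal nets + nearest-parent map'' does not automatically supply; (ii) using a fixed total order on each level of centers, they define for every pair of scales a partial-order relation on centers and then build \emph{closed}, \emph{open}, and \emph{half-open} dyadic cubes recursively top-down; the half-open cubes give the exact partition (1)--(4) directly, without any null-set surgery, while still satisfying the sandwich property (6) with absolute constants. In your bottom-up ``union of descendant balls'' picture you would instead have to excise overlaps scale by scale in a compatible fashion, and you give no argument that this can be done while keeping $c_1>0$ in the inner inclusion. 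If you want to repair the sketch, either follow the recursive half-open construction of \cite{HK}, or else explicitly prove that one can choose a measurable selection $Q_\alpha^k\subset\widetilde Q_\alpha^k$ partitioning $X$ at each scale, nested in $k$, and still containing $B(x_\alpha^k,c_1\delta^k)$; the latter is exactly the step you have swept under the rug.
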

  We will refer to the last property as the \emph{sandwich property}.

  A weight $w$ is a nonnegative locally integrable function on $(X,\mu)$ that takes values in $(0,\infty)$ almost everywhere. For any $1<p<\infty$ we define the $A_p$ constant of the weight $w$ on the space of homogeneous type $X$ as follows

  \begin{equation}\label{Ap_mu}
    [w]_{A_p} := \sup_Q \left(\frac{1}{\mu(Q)} \int_Q w(x)d\mu\right) \left(\frac{1}{\mu(Q)} \int_Q w(x)^{1-p'} d\mu \right)^{p-1}.
  \end{equation}

  Here we can take $Q$ to be either in the family of dyadic cubes or balls, since the concept of a non-dyadic cube is not defined in spaces of homogeneous type. Note that the $A_p$ constant is comparable when considering suprema over families of dyadic cubes or balls by using the sandwich property of dyadic cubes and the doubling property of the measure $\mu$.

  It is crucial to note that we will take this constant with respect to cubes (by which we always will mean dyadic cubes), because the $A_{\infty}$ constant below is not comparable in the same way, even in the classical Euclidean case.

  \par We define the Fujii--Wilson $A_{\infty}$ constant in a space of homogeneous type as follows:

  \begin{equation*}
     [w]_{A_{\infty}} = \sup_{Q} \frac{1}{w(Q)}\int_Q M(w\chi_Q)d\mu.
  \end{equation*}

  While this $A_{\infty}$ constant is comparable using dyadic cubes or balls, the constant of comparison depends on the measure $w$ (which is doubling since $w\in A_{\infty}$).  Hence to achieve sharp bounds in spaces of homogeneous type we cannot simply switch between these constants defined with respect to cubes or balls since we introduce a $w$-dependent factor.  This is reflected in the difference between the sharp reverse H\"older inequalities involving this $A_{\infty}$ constant: if defined with respect to cubes we get a sharp reverse H\"older (see Lemma \ref{SRHI}), but if defined with respect to balls we only get a sharp weak version (see \cite{HPR}).

      \subsection{Calder\'on--Zygmund operators and commutators}

Next we recall some definitions related to Calder\'on--Zygmund operators and their commutators in the homogeneous setting.

\begin{definition}\label{calderon}

We say that $K:X \times X \setminus\lbrace{x=y\rbrace}\to \R$ is a Calder\'on--Zygmund kernel if there exist $\eta>0$ and $C<\infty$ such that for all $x_0\neq y\in X$ and $x\in X$ it satisfies the decay condition:

\begin{equation}\label{decay}
|K(x_0,y)|\leq \frac{C}{\mu(B(x_0,\rho(x_0,y)))}
\end{equation}

and the smoothness conditions for $\rho(x_0,x)\leq \eta\rho(x_0,y)$:

\begin{equation}\label{smoothness1}
|K(x,y)-K(x_0,y)|\leq\left(\frac{\rho(x,x_0)}{\rho(x_0,y)}\right)^{\eta}\frac{1}{\mu(B(x_0,\rho(x_0,y)))},
\end{equation}

and

\begin{equation}\label{smoothness2}
|K(y,x)-K(y,x_0)|\leq\left(\frac{\rho(x,x_0)}{\rho(x_0,y)}\right)^{\eta}\frac{1}{\mu(B(x_0,\rho(x_0,y)))}.
\end{equation}

\end{definition}

\begin{definition}
Let $T$ be a singular integral operator associated to Calder\'on--Zygmund kernel $K$. If in addition $T$ is bounded on $L^2$, we say that $T$ is a Calder\'on--Zygmund operator.
\end{definition}

Next let us recall some boundedness properties of the Calder\'on--Zygmund operators.

\begin{theorem}[\cite{CW2}] Let $T$ be a Calder\'on--Zygmund operator on a space of homogeneous type.  Then $T$ is bounded from $L^1$ to $L^{1,\infty}$.
\end{theorem}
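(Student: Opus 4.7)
The plan is to derive the weak-$(1,1)$ inequality $\mu(\{x\in X : |Tf(x)|>\lambda\}) \lesssim \|f\|_{L^1}/\lambda$ by transplanting the classical Calder\'on--Zygmund argument to the SHT setting, using the dyadic system of Theorem \ref{dyadic} in place of Euclidean cubes. Fix $\lambda>0$ and form the CZ decomposition of $f$ at level $\lambda$: take the maximal dyadic cubes $\{Q_j\}$ on which $\avgint_{Q_j}|f|\,d\mu > \lambda$. Property~(5) of Theorem \ref{dyadic} bounds the averages on the $Q_j$ themselves by $\lambda/\varepsilon$, and the Lebesgue Differentiation Theorem in SHT from \cite{ACM} gives $|f|\leq \lambda$ a.e.\ off $\Omega=\bigcup_j Q_j$. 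Split $f=g+b$ with $g=f\chi_{\Omega^c}+\sum_j\bigl(\avgint_{Q_j}f\,d\mu\bigr)\chi_{Q_j}$ and $b=\sum_j b_j$, each $b_j$ supported in $Q_j$ with zero mean. Then $|g|\lesssim\lambda$ a.e.\ and $\sum_j \mu(Q_j)\leq \|f\|_{L^1}/\lambda$. The good part is routine: the $L^2$-boundedness of $T$ together with $\|g\|_{L^2}^2\lesssim \lambda\|g\|_{L^1}\lesssim \lambda\|f\|_{L^1}$ and Chebyshev yield $\mu(\{|Tg|>\lambda/2\})\lesssim \|f\|_{L^1}/\lambda$.

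For the bad part, for each $Q_j\in D_{k_j}$ enlarge to $Q_j^*=B(x_c(Q_j),\Lambda\delta^{k_j})$, with $\Lambda$ chosen in terms of $\kappa$, the sandwich constant from Theorem \ref{dyadic}, and the smoothness exponent $\eta$, so that $\rho(x_c(Q_j),y)\leq \eta\,\rho(x_c(Q_j),x)$ whenever $y\in Q_j$ and $x\notin Q_j^*$; this puts us in position to invoke \eqref{smoothness2}. By doubling, $\mu(\Omega^*)\lesssim \sum_j\mu(Q_j)\lesssim \|f\|_{L^1}/\lambda$ where $\Omega^*=\bigcup_j Q_j^*$. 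Off $\Omega^*$, use the mean-zero condition to write $Tb_j(x)=\int_{Q_j}[K(x,y)-K(x,x_c(Q_j))]b_j(y)\,d\mu(y)$, apply \eqref{smoothness2}, and decompose $X\setminus Q_j^*$ into annuli of inner radius $\sim 2^k\Lambda\delta^{k_j}$. On each annulus the smoothness gives a factor $2^{-k\eta}$, while the measure of the annulus is comparable to the denominator $\mu(B(x_c(Q_j),\rho(x_c(Q_j),x)))$ up to a single doubling factor, so summing the resulting geometric series in $k$ yields $\int_{X\setminus Q_j^*}|Tb_j|\,d\mu\lesssim \|b_j\|_{L^1}$. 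Summing over $j$ and applying Chebyshev on $X\setminus\Omega^*$ gives the bad-part contribution.

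The main obstacle is running this annular estimate cleanly in the abstract SHT framework: lacking explicit power-law growth of $\mu$, one needs to verify that the $2^{-k\eta}$ gain from kernel smoothness really does beat the doubling growth of $\mu$ along successive annuli. The saving grace is that the denominator appearing in \eqref{smoothness2} is itself scaled to $\rho(x_c(Q_j),x)$, so the ratio of annulus measure to denominator is a single doubling constant, uniform in $k$, and only the geometric factor $2^{-k\eta}$ accumulates. Combining $\mu(\Omega^*)$, $\mu(\{|Tg|>\lambda/2\})$, and $\mu(\{x\notin\Omega^*:|Tb(x)|>\lambda/2\})$, each bounded by $C\|f\|_{L^1}/\lambda$, finishes the argument.
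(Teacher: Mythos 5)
The paper cites this result to Coifman and Weiss \cite{CW2} and does not supply its own proof, so there is nothing internal to compare against. Your argument is the standard weak-$(1,1)$ proof by Calder\'on--Zygmund decomposition, modernized to use the Hyt\"onen--Kairema dyadic cubes of Theorem \ref{dyadic} in place of the covering lemmas Coifman--Weiss relied on, and it is correct. For complete rigor you should set aside the trivial case $\mu(X)<\infty$ with $\lambda\leq\|f\|_{L^1}/\mu(X)$ (handled directly by $\mu(\{|Tf|>\lambda\})\leq\mu(X)$) before invoking the stopping time; once that is done, the maximal stopping cubes $Q_j$ exist. The rest of the details check out: property~(5) of Theorem \ref{dyadic} gives the bounded overshoot $\avgint_{Q_j}|f|\,d\mu\leq\lambda/\varepsilon$; the dilate $Q_j^*=B(x_c(Q_j),\Lambda\delta^{k_j})$ with $\Lambda$ large enough (of order $C/\eta$, $C$ the sandwich constant) forces $\rho(x_c(Q_j),y)\leq\eta\,\rho(x_c(Q_j),x)$ for $y\in Q_j$ and $x\notin Q_j^*$, so \eqref{smoothness2} applies; and on each annulus the denominator $\mu(B(x_c(Q_j),\rho(x_c(Q_j),x)))$ absorbs the annulus measure up to one doubling constant, leaving the convergent geometric series $\sum_k 2^{-k\eta}$.
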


We will be invoking sparse operators to bound our Calder\'{o}n-Zygmund operators using the formula originally due to Lerner \cite{L}.  Before we discuss this, we need to define a sparse family on a dyadic grid $D=\cup_k D_k$ as in Theorem \ref{dyadic}.

  \begin{definition}
   A sparse family $S = \cup_k S_k$, $S_k\in D_k$ on $D$ is a collection of dyadic cubes such that for $Q',Q \in S$, \[\mu(\bigcup_{Q'\subsetneq Q}Q')\leq \frac{\mu(Q)}{2}.\]

  \end{definition}

  \begin{definition}
  Given a sparse family $S$, we define a sparse operator as follows
     \[T^S(f) = \sum_{Q\in S}(\avgint_Qf)\cdot \chi_Q.\]
  \end{definition}

  We also introduce the decomposition of Lerner, proved in the homogeneous setting in \cite{ACM}.

  \begin{theorem}
  For any Calder\'{o}n-Zygmund operator $T$ on a space of homogeneous type $X$, we have that

  $$\|Tf\|_Y \leq C \sup_{D,S}\|T^Sf\|_Y,$$

  where $D$ is a dyadic grid, $C$ only depends on the operator and the space $X$, and $Y$ is any Banach function space.

  \end{theorem}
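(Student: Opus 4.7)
The plan is to follow Lerner's strategy of reducing pointwise control of $|Tf|$ to a sparse averaging operator via a local mean oscillation formula, carried out in the Hyt\"onen--Kairema dyadic grid from Theorem \ref{dyadic}. The output of this strategy is actually a stronger pointwise (or quasi-pointwise, modulo a median) bound of the form
\begin{equation*}
    |Tf(x) - m_{Tf}(Q_0)| \;\leq\; C\, T^{S}(|f|)(x) \quad \text{a.e. } x\in Q_0,
\end{equation*}
for some sparse family $S \subset D$ that depends on $f$ and on the starting cube $Q_0$. Taking the Banach function norm $\|\cdot\|_Y$, letting $Q_0$ exhaust $X$ (so that the median term $m_{Tf}(Q_0)$ disappears, using that $Tf$ lies in weak $L^1$ by the Coifman--Weiss theorem quoted above), and then taking the supremum over all dyadic grids $D$ and sparse $S$ yields the claimed inequality.

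The first ingredient is an SHT version of Lerner's local mean oscillation decomposition: for a measurable $g$ and a fixed cube $Q_0 \in D$, there exists a sparse family $S \subset D(Q_0)$ such that
\begin{equation*}
    |g(x) - m_g(Q_0)| \;\leq\; C \sum_{Q \in S} \omega_\lambda(g;Q)\,\chi_Q(x),
\end{equation*}
where $\omega_\lambda(g;Q)$ is the $\lambda$-local oscillation. The Calder\'on--Zygmund structure of $X$, the doubling of $\mu$, and properties (2)--(6) of Theorem \ref{dyadic}, give enough Calder\'on--Zygmund--style stopping time arguments to construct such a sparse family; the sparseness constant $\tfrac12$ follows from choosing the stopping threshold appropriately against the doubling constant $D_\mu$.

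The second ingredient, applied with $g = Tf$, is the key oscillation estimate: for every $Q \in D$,
\begin{equation*}
    \omega_\lambda(Tf;Q) \;\leq\; C\,\frac{1}{\mu(\widetilde Q)} \int_{\widetilde Q} |f|\, d\mu,
\end{equation*}
where $\widetilde Q$ is a bounded dilate of $Q$ (controlled by $\kappa$, $C$ and $\delta$ from the sandwich property). This is where the kernel conditions \eqref{decay}--\eqref{smoothness2} enter: split $f = f\chi_{\widetilde Q} + f\chi_{X\setminus \widetilde Q}$; the first piece is handled by the weak $L^1$ boundedness of $T$ and Kolmogorov's inequality, which converts the oscillation of $T(f\chi_{\widetilde Q})$ into an $L^1$ average on $\widetilde Q$; the second piece is essentially constant on $Q$ thanks to the H\"older regularity \eqref{smoothness1}, with difference bounded by the same average of $|f|$ on $\widetilde Q$, using the decay \eqref{decay} together with doubling and the sandwich property to sum a geometric series over scales.

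Combining the two ingredients produces $|Tf - m_{Tf}(Q_0)| \leq C \sum_{Q\in S} (\avgint_{\widetilde Q}|f|)\chi_Q$. A standard re-indexing (replacing the sparse family $\{Q\}$ by $\{\widetilde Q\}$, and verifying that $\{\widetilde Q\}$ is still sparse, perhaps relative to a shifted dyadic grid in the spirit of Hyt\"onen--Kairema's adjacent systems of dyadic cubes) yields a sparse operator on the right; taking $\|\cdot\|_Y$ gives the theorem. The main technical obstacle is precisely this last re-indexing step: unlike in $\mathbb{R}^n$, in an SHT one must pass from a cube to a quasimetric dilate via an auxiliary dyadic system, and one must check that the resulting family is sparse with constants depending only on $(X,\rho,\mu)$ and on $T$. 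Once this is in hand (as in \cite{ACM}, \cite{AV}), everything else is a direct transcription of Lerner's Euclidean argument.
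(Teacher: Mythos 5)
The paper does not give a proof of this statement; it introduces the decomposition and simply cites \cite{ACM}, where it is proved in the homogeneous setting. Your sketch follows exactly the strategy of \cite{ACM} (Lerner's local mean oscillation decomposition adapted to the Hyt\"onen--Kairema dyadic systems, the oscillation estimate for $Tf$ via weak $(1,1)$ plus kernel regularity, and the re-indexing through adjacent dyadic grids to recover sparseness of the dilated family), so it is the same approach as the cited proof.
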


  And finally, we introduce the definitions of a $BMO$ function and the iterated commutators of Calder\'on--Zygmund operators with functions in $BMO$ in spaces of homogeneous type.

  \begin{definition}
    For a locally integrable function $b:X\longrightarrow\R$ we define

      \[ ||b||_{BMO} = \sup_Q \frac{1}{\mu(Q)} \int_Q |b(y)-b_Q|d\mu(y) < \infty,\]
    where the supremum is taken over all dyadic cubes in $X$, and

    \[b_Q = \frac{1}{\mu(Q)} \int_Q b(y)d\mu(y).\]

  \end{definition}

  \begin{definition}
    Given a Calder\'on--Zygmund operator $T$ with kernel $K$ and a function $b$ in $BMO$, we define the k-th order commutator with $b$, for an integer $k\geq 0$, as follows

    $$ T_b^k(f)(x) = \int_X (b(x)-b(y))^k K(x,y)f(y) d\mu(y).$$
  \end{definition}

  In the particular case when $k=1$, $T_b^1$ is the classic commutator and we will denote it by $T_b$.

  \par Throughout this paper, $X$ will denote a space of homogeneous type equipped with a quasimetric $\rho$ with quasimetric constant $\kappa$ and a positive doubling measure $\mu$ with doubling constant $D_{\mu}$. We will denote by $C$ a positive constant independent of the weight constant which may change from a line to other.


   \section{Main results}\label{Sect3}

Our goal is to prove the following results.


\begin{theorem}\label{LinearGrowthThm}

  Let $T$ be a Calder\'on--Zygmund operator and let $1<p<\infty$. Then for any weight $w$ and $r>1$,

  \begin{equation}\label{LinearGrowthIneq}
    ||Tf||_{L^p(w)} \leq C p p' (r')^{\frac{1}{p'}} ||f||_{L^p(M_r w)},
  \end{equation}

  where $C$ is an absolute constant that also depends on $T$.

\end{theorem}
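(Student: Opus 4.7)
The plan is to invoke the Lerner sparse-domination theorem stated above to reduce the inequality to one for a generic sparse operator $T^S$, then prove the sparse bound by dualizing in $L^p(w)$ and introducing $M_rw$ via a simple Hölder's inequality that exploits $w\le M_rw$.

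By $L^p(w)$--$L^{p'}(w)$ duality,
\[\|T^Sf\|_{L^p(w)} \;=\; \sup_{\substack{g\ge 0\\ \|g\|_{L^{p'}(w)}\le 1}} \sum_{Q\in S}\Bigl(\avgint_Q f\,d\mu\Bigr)\int_Q gw\,d\mu.\]
On each $Q$, Hölder's inequality in $L^{p'}(Q,w)$--$L^p(Q,w)$ with the second factor equal to $1$ gives $\int_Q gw\,d\mu\le\bigl(\int_Q g^{p'}w\,d\mu\bigr)^{1/p'} w(Q)^{1/p}$, and since $r>1$ implies $w\le M_rw$ pointwise a.e.\ (by Lebesgue differentiation in $(X,\rho,\mu)$), one has $w(Q)^{1/p}\le (M_rw)(Q)^{1/p}$; so
\[\int_Q gw\,d\mu \;\le\; \Bigl(\int_Q g^{p'}w\,d\mu\Bigr)^{1/p'}(M_rw)(Q)^{1/p}.\]
Substituting this back and exploiting the sparsity sets $E(Q)\subset Q$ with $\mu(E(Q))\ge\mu(Q)/2$ (pairwise disjoint across $Q\in S$), every sum over $Q\in S$ linearizes into an integral on $X$ of a product of dyadic maximal-type averages.

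From here the estimate decouples into two parallel pieces. The $g$-side is controlled by Doob's inequality for the $w$-weighted dyadic maximal function on $L^{p'}(w)$, whose operator norm is $(p')'=p$ and which absorbs the $g$-piece into $\|g\|_{L^{p'}(w)}\le 1$. The $f$-side reduces to a weighted $L^p(M_rw)$-type bound on a dyadic maximal function of $f$ multiplied by powers of $M_rw$; here I would use the Coifman--Rochberg estimate $[M_rw]_{A_1}\le Cr'$, valid in $(X,\rho,\mu)$ via the dyadic grid of \cite{HK}, together with Buckley's sharp maximal bound, to produce the factor $p'(r')^{1/p'}$. Multiplying the contributions gives the claimed constant $Cpp'(r')^{1/p'}$.

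The hard part is to arrange the Hölder splittings so that the $A_1$ constant of $M_rw$ enters to exactly the power $1/p'$; a naive Hölder yields $(r')^{1/p}$ or worse, and balancing the exponents requires doing the $gw$-split on each $Q$ individually before any summation over $S$. A secondary task is to check that Coifman--Rochberg and Buckley's bound both transfer cleanly to the homogeneous-type setting with explicit constants, which they do thanks to the Hytönen--Kairema dyadic system and the sharp reverse Hölder inequality for $A_\infty$ weights in $(X,\rho,\mu)$.
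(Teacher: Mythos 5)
Your proposal takes a genuinely different route from the paper. The paper's proof does not dualize the sparse sum directly: it first establishes a Coifman--Fefferman inequality $\int_X |Tf|\,w\,d\mu \le C[w]_{A_p}\int_X Mf\,w\,d\mu$ via sparse domination, then runs a Rubio de Francia iteration (building an $A_3$ weight of the form $R(h)(M_rw)^{1/p'}$ with controlled $A_1$ constant via Coifman--Rochberg) to prove the two-weight bound $\|Tf\|_{L^p((M_rw)^{1-p})}\le Cp\|Mf\|_{L^p((M_rw)^{1-p})}$, and finally transfers this to $T^*$ by duality and closes the argument with a pointwise estimate of $M(f)$ by $(M_rw)^{1/p}M\bigl((fw^{-1/p})^{(pr)'}\bigr)^{1/(pr)'}$ (a H\"older step with exponent $pr$) and the unweighted maximal theorem. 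This first produces $(r')^{1-1/(pr)}$, which is then observed to be at most $2^{-1/p}(r')^{1/p'}$.

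There is, however, a real gap in your outline at exactly the place you flag as ``the hard part.'' After your per-cube H\"older split and the pointwise bound $w\le M_rw$, you say the $f$-side is handled by Coifman--Rochberg $[M_rw]_{A_1}\le Cr'$ together with Buckley's sharp maximal bound to ``produce the factor $p'(r')^{1/p'}$.'' But Buckley's theorem for the maximal function on $L^p(v)$ gives $\|M\|_{L^p(v)}\le Cp'[v]_{A_p}^{1/(p-1)}$, and since $v=M_rw\in A_1$ with $[v]_{A_1}\lesssim r'$, the best you can extract this way is $(r')^{1/(p-1)}=(r')^{p'-1}$, not $(r')^{1/p'}$. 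These exponents agree only at $p=2$; for $1<p<2$ one has $1/(p-1)>1>1/p'$, so your route yields a strictly worse power of $r'$. So the exponent-balancing you identify as the crux is not merely a bookkeeping nuisance --- as written, the Buckley step does not deliver the stated $(r')^{1/p'}$. The paper avoids this by never invoking Buckley at all: the $(r')$-dependence comes from the sharp unweighted $L^{(pr)'/(pr)'}$ maximal bound applied after the $pr$-H\"older split, giving $(r')^{1-1/(pr)}$, which self-improves to $(r')^{1/p'}$. If you want to push your sparse-dualization approach through, you would need to replace the naive Buckley step with a sharper two-weight computation that mirrors this $pr$-H\"older trick (or the Rubio de Francia/Coifman--Fefferman machinery), rather than feeding $[M_rw]_{A_1}$ into the one-weight maximal estimate.
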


From the previous theorem we obtain the following estimates as immediate corollaries.


\begin{cor}\label{LinearGrowthCor}

  Let $T$ be a Calder\'on--Zygmund operator and let $1<p<\infty$. Then if $w\in A_{\infty}$ we obtain

       \begin{equation}\label{LinearGrowthCor1}
        ||Tf||_{L^p(w)} \leq C p p' [w]_{A_{\infty}}^{1/p'} ||f||_{L^p(Mw)},
     \end{equation}
  and if $w\in A_1$,

     \begin{equation}\label{LinearGrowthCor2}
      ||Tf||_{L^p(w)} \leq C p p' [w]_{A_{\infty}}^{1/p'} [w]_{A_1}^{1/p} ||f||_{L^p(w)},
     \end{equation}
  where $C$ is an absolute constant that also depends on $T$.

\end{cor}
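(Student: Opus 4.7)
The plan is to deduce both inequalities directly from Theorem \ref{LinearGrowthThm} by making a judicious choice of the parameter $r$ and then invoking standard properties of $A_\infty$ and $A_1$ weights in the homogeneous setting. No additional sparse-operator machinery is needed: both estimates are obtained by combining Theorem \ref{LinearGrowthThm} with a pointwise domination of $M_r w$ by $Mw$ (for \eqref{LinearGrowthCor1}) and the pointwise definition of $A_1$ (for \eqref{LinearGrowthCor2}).

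For \eqref{LinearGrowthCor1}, the key step is to invoke the sharp reverse H\"older inequality for $A_\infty$ weights in spaces of homogeneous type (Lemma \ref{SRHI}). This provides an exponent
$$r_w = 1 + \frac{1}{c\,[w]_{A_\infty}},$$
for an absolute constant $c>0$, such that for every dyadic cube $Q$,
$$\left(\avgint_Q w^{r_w}\,d\mu\right)^{1/r_w} \leq C\,\avgint_Q w\,d\mu.$$
Taking the supremum over cubes containing a point $x$ and using the sandwich property to pass freely between balls and cubes yields the pointwise bound $M_{r_w}w(x) \leq C\,Mw(x)$ for $\mu$-a.e.~$x$. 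Note that with this choice $r_w' \simeq [w]_{A_\infty}$, so that $(r_w')^{1/p'} \leq C\,[w]_{A_\infty}^{1/p'}$. Substituting this value of $r$ into Theorem \ref{LinearGrowthThm} produces \eqref{LinearGrowthCor1} immediately.

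For \eqref{LinearGrowthCor2}, the passage from \eqref{LinearGrowthCor1} is straightforward since $A_1 \subset A_\infty$. The very definition of the $A_1$ class gives $Mw(x) \leq [w]_{A_1}\,w(x)$ for $\mu$-a.e.~$x$, and so
$$\|f\|_{L^p(Mw)}^p = \int_X |f|^p\,Mw\,d\mu \leq [w]_{A_1}\int_X |f|^p\,w\,d\mu = [w]_{A_1}\,\|f\|_{L^p(w)}^p.$$
Inserting this into \eqref{LinearGrowthCor1} yields \eqref{LinearGrowthCor2}.

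The main obstacle, and essentially the only nontrivial ingredient, is external to the corollary: the validity of a sharp reverse H\"older inequality in the homogeneous setting with constant polynomial in $[w]_{A_\infty}$, which is supplied by Lemma \ref{SRHI}. A small technical point worth verifying is that the averaged reverse H\"older estimate genuinely upgrades to the pointwise inequality $M_{r_w} w \leq C\,Mw$; this is where the sandwich property of Theorem \ref{dyadic} and the doubling property of $\mu$ are used, to ensure that the cubes appearing in $M_r$ and those in the reverse H\"older inequality are interchangeable up to absolute constants, which then get absorbed into $C$.
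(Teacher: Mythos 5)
Your proof is correct and follows essentially the same route as the paper: choose $r$ via the sharp reverse H\"older inequality (Lemma \ref{SRHI}) so that $r'\approx[w]_{A_\infty}$ and $M_r w\lesssim Mw$, then substitute into Theorem \ref{LinearGrowthThm}, and finally use the pointwise bound $Mw\leq[w]_{A_1}w$ to pass from \eqref{LinearGrowthCor1} to \eqref{LinearGrowthCor2}. The only difference is that you spell out the pointwise steps which the paper simply calls ``immediate.''
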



As an application of \eqref{LinearGrowthIneq} we obtain the following endpoint estimate.

\begin{theorem}\label{WeakEstimateThm}
    Let $T$ be a Calder\'on--Zygmund operator. Then for any weight $w$ and $r>1$,

    \begin{equation}\label{WeakEstimateIneq}
      ||Tf||_{L^{1,\infty}(w)} \leq C \log{(e+r')} ||f||_{L^1(M_r w)},
    \end{equation}

    where $C$ is an absolute constant that also depends on $T$.

\end{theorem}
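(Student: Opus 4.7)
My approach is the classical Calder\'on--Zygmund-type splitting at level $\lambda$, combined with the strong bound \eqref{LinearGrowthIneq} applied at an exponent $p$ slightly above $1$.

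Fix $\lambda>0$. Using the dyadic grid of Theorem \ref{dyadic}, perform the Calder\'on--Zygmund decomposition of $f$ at height $\lambda$ to obtain pairwise disjoint dyadic cubes $\{Q_j\}$ with $\lambda<\avgint_{Q_j}|f|\,d\mu\leq C\lambda$ and $|f|\leq\lambda$ $\mu$--a.e.\ off $\Omega:=\bigcup_j Q_j$. Set
\[g:=f\chi_{X\setminus\Omega}+\sum_j f_{Q_j}\chi_{Q_j},\qquad b:=f-g=\sum_j b_j,\quad b_j:=(f-f_{Q_j})\chi_{Q_j},\]
so that $\|g\|_\infty\leq C\lambda$, $\supp b_j\subset Q_j$, and $\int_X b_j\,d\mu=0$. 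From the inclusion $\{|Tf|>\lambda\}\subseteq\{|Tg|>\lambda/2\}\cup\{|Tb|>\lambda/2\}$ it suffices to bound each piece by $(C\log(e+r')/\lambda)\,\|f\|_{L^1(M_r w)}$.

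For the bad term I would use the sandwich property of Theorem \ref{dyadic} to enlarge each $Q_j$ to a ball $Q_j^*\supset Q_j$ chosen so that \eqref{smoothness1} applies for $x\notin Q_j^*$ and $y\in Q_j$. Doubling of $\mu$ together with the stopping inequality give
\[w\Bigl(\bigcup_j Q_j^*\Bigr)\leq C\sum_j\mu(Q_j)\inf_{Q_j}Mw\leq \frac{C}{\lambda}\|f\|_{L^1(Mw)}\leq \frac{C}{\lambda}\|f\|_{L^1(M_r w)}.\]
Outside $\bigcup_j Q_j^*$ the cancellation $\int b_j\,d\mu=0$ combined with \eqref{smoothness1} and the standard dyadic--annular decomposition yields $\int_{X\setminus\cup_j Q_j^*}|Tb|\,w\,d\mu\leq (C/\lambda)\|f\|_{L^1(M_r w)}$, and Chebyshev closes this part without any $\log$ cost.

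For the good term I would choose $p=1+1/\log(e+r')$, so that $p'\leq 1+\log(e+r')$ and $(r')^{1/p'}\leq e$; Theorem \ref{LinearGrowthThm} then reads $\|Tg\|_{L^p(w)}\leq C\log(e+r')\,\|g\|_{L^p(M_r w)}$, and by Chebyshev
\[w\{|Tg|>\lambda/2\}\leq \frac{(2C\log(e+r'))^p}{\lambda^p}\,\|g\|_{L^p(M_r w)}^p.\]
The argument is then finished by the key estimate $\|g\|_{L^p(M_r w)}^p\leq C\lambda^{p-1}\|f\|_{L^1(M_r w)}$ with a constant independent of $r$, since $(\log(e+r'))^p\leq C\log(e+r')$ for this choice of $p$. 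The main obstacle lies precisely in this last bound: the reduction $\|g\|_{L^p(M_r w)}^p\leq \|g\|_\infty^{p-1}\int |g|M_r w\,d\mu$ shifts the difficulty to the bad-cube contributions $|f_{Q_j}|\int_{Q_j}M_r w$, and the naive use of the $A_1$ character of $M_r w$ (whose Coifman--Rochberg constant is $\sim r'$) costs an extra factor of $r'$. Overcoming this requires invoking the sharp reverse H\"older inequality (Lemma \ref{SRHI}) for $M_r w$ at the critical self-improving exponent, combined with the observation that $(r')^{p-1}\leq e$ for our choice of $p$, so that the apparent $r'$ loss from averaging $M_r w$ over the stopping cubes is absorbed into a bounded factor rather than polluting the final $\log(e+r')$ dependence.
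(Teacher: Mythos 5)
Your overall architecture matches the paper's: Calder\'on--Zygmund decomposition at level $\lambda$, a H\"ormander-type estimate for the bad part, and Chebyshev plus Theorem~\ref{LinearGrowthThm} for the good part, with $p-1\sim 1/\log r'$ chosen so that $(r')^{p-1}\leq e$ and $(p')^{p-1}\leq e$.

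However, your handling of the good part has a genuine gap, and the fix you propose does not close it. After Chebyshev you obtain $\|g\|_{L^p(M_r w)}^p$, and since $g=f_{Q_j}$ on each stopping cube you are led to controlling $\sum_j |f_{Q_j}|\int_{Q_j} M_r w\,d\mu$. As you correctly observe, replacing $\avgint_{Q_j} M_r w$ by $\inf_{Q_j} M_r w$ via the Coifman--Rochberg/$A_1$ property of $M_r w$ costs a full factor $r'$, and this factor is \emph{not} raised to the power $p-1$, so the observation $(r')^{p-1}\leq e$ does not absorb it. The sharp reverse H\"older inequality for $M_r w$ does not help either: RHI compares $L^1$ and $L^{1+\delta}$ averages of a weight over the same cube, whereas what you need is to compare the average of $M_r w$ over $Q_j$ with its infimum over $Q_j$, which is exactly the $A_1$ property you are trying to avoid. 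As stated, your argument would leak an uncontrolled factor $r'$.

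The missing idea is to restrict the weight \emph{before} invoking the strong bound. Since the good part is only measured on $(\widetilde{\Omega})^c$, one has
\[
w\bigl(\{y\in(\widetilde{\Omega})^c : |Tg(y)|>\lambda/2\}\bigr)
\leq \frac{2^p}{\lambda^p}\,\|Tg\|_{L^p(w\chi_{(\widetilde{\Omega})^c})}^p ,
\]
and applying Theorem~\ref{LinearGrowthThm} with the weight $w\chi_{(\widetilde{\Omega})^c}$ yields $\|g\|_{L^p(M_r(w\chi_{(\widetilde{\Omega})^c}))}^p$ on the right. The function $M_r(w\chi_{(\widetilde{\Omega})^c})$ is essentially constant on each stopping cube $Q_j$ --- more precisely, for $x\in Q_j$,
\[
M_r(w\chi_{(\widetilde{\Omega})^c})(x)\lesssim \inf_{y\in Q_j} M_r(w\chi_{(\widetilde{\Omega})^c})(y),
\]
with implicit constant depending only on $D_\mu$ (this is the inequality \eqref{maximalinf} in the paper). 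The reason is geometric: $w\chi_{(\widetilde{\Omega})^c}$ is supported away from $Q_j$, so the balls that see it and touch $Q_j$ have radius comparable to or larger than the radius of $Q_j$, forcing the maximal function to oscillate only by a doubling factor across $Q_j$. This replaces the $r'$-dependent Coifman--Rochberg step by an $r$-independent one and closes the argument. Everything else in your plan (the treatment of the dilated cubes, the cancellation estimate for the bad part, the choice of $p$) is sound.
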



Additionally we get the following estimates as corollaries of the above result choosing $r$ as the sharp exponent in the reverse H\"older inequality for weights in the $A_{\infty}$ class in the setting of spaces of homogeneous type (see Lemma \ref{SRHI}) and taking into account that $r'\approx [w]_{A_{\infty}}$.

\begin{cor}\label{WeakEstimateCor}
  Let $T$ be a Calder\'on--Zygmund operator. Then

  \begin{enumerate}
    \item If $w\in A_{\infty}$

      \begin{equation*}
        ||Tf||_{L^{1,\infty}(w)} \leq C \log{(e+[w]_{A_{\infty}})} ||f||_{L^1(M_r w)}.
      \end{equation*}

    \item If $w\in A_1$

      \begin{equation*}
        ||Tf||_{L^{1,\infty}(w)} \leq C  [w]_{A_1} \log{(e+[w]_{A_{\infty}})} ||f||_{L^1(w)}.
      \end{equation*}

  \end{enumerate}
  In both cases $C$ is an absolute constant that also depends on $T$.

\end{cor}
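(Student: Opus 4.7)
The plan is to derive both statements as immediate consequences of Theorem \ref{WeakEstimateThm} by specializing $r$ to the sharp exponent in the reverse Hölder inequality for $A_\infty$ weights in $X$ provided by Lemma \ref{SRHI}.

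For part (1), let $c_0$ denote the absolute constant from Lemma \ref{SRHI} and set $r := 1+\tfrac{1}{c_0\,[w]_{A_\infty}}$, which is the sharp exponent for which a reverse Hölder inequality of the form $\bigl(\avgint_Q w^{r}\bigr)^{1/r}\leq 2\avgint_Q w$ holds on every dyadic cube. A direct computation then gives
$$r' \;=\; \frac{r}{r-1} \;=\; 1+c_0\,[w]_{A_\infty},$$
so $\log(e+r')\leq C\log(e+[w]_{A_\infty})$ for some absolute $C$. Plugging this choice of $r$ into \eqref{WeakEstimateIneq} immediately yields
$$\|Tf\|_{L^{1,\infty}(w)}\leq C\log(e+[w]_{A_\infty})\,\|f\|_{L^1(M_r w)},$$
which is exactly part (1).

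For part (2), I first observe that taking a supremum over dyadic cubes containing a given point $x$ in the reverse Hölder inequality produces the pointwise estimate $M_r w(x)\leq C_0\, Mw(x)$ almost everywhere, with $C_0$ absolute. The defining $A_1$ condition further gives $Mw(x)\leq [w]_{A_1}w(x)$ a.e., so $M_r w(x)\leq C_0[w]_{A_1}w(x)$ a.e., and therefore $\|f\|_{L^1(M_r w)}\leq C_0[w]_{A_1}\|f\|_{L^1(w)}$. Combining this with the estimate from part (1) produces part (2).

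The whole argument is a pair of short deductions from Theorem \ref{WeakEstimateThm}. The only point that deserves care is translating the cube-based reverse Hölder inequality of Lemma \ref{SRHI} into a pointwise bound on $M_r w$ with a constant independent of $w$; this is handled in the usual way via the sandwich property in Theorem \ref{dyadic} and the doubling property of $\mu$, both of which contribute only absolute constants. Consequently I anticipate no substantial obstacle.
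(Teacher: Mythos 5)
Your proof is correct and matches the paper's own (very terse) argument: specialize $r$ to the sharp reverse H\"older exponent from Lemma~\ref{SRHI} so that $r'\approx[w]_{A_\infty}$ in Theorem~\ref{WeakEstimateThm}, and then for part~(2) use $M_r w\lesssim Mw\leq[w]_{A_1}w$ a.e.\ to pass from $\|f\|_{L^1(M_r w)}$ to $[w]_{A_1}\|f\|_{L^1(w)}$. You are also right to flag that the only point requiring care is that the cube-based reverse H\"older inequality and the dyadic maximal function compare to the ball-based ones with only $\mu$-dependent (not $w$-dependent) constants; the paper implicitly relies on the same fact.
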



A natural extension of our earlier listed results for Calder\'on--Zygmund operators is a version of an extrapolation theorem from Duoandikoetxea \cite{Duo}, for spaces of homogeneous type.  Here we get sharp bounds involving the $A_p$ weight constant through an initial $A_{p_0}$ boundedness assumption.

\begin{theorem}\label{Duo_ExtrapolationThm}
  Assume that for some family of pairs of nonnegative functions $(f,g)$, for some $p_0\in [1,\infty)$, and for all $w\in A_{p_0}$ we have

  \begin{equation*}
    \left(\int_{X} g^{p_0} w \right)^{1/p_0} \leq C N([w]_{A_{p_0}}) \left(\int_{X} f^{p_0} w \right)^{1/p_0},
  \end{equation*}
  where $N$ is an increasing function and the constant $C$ does not depend on $w$. Then for all $1<p<\infty$ and all $w\in A_p$ we have

  \begin{equation*}
    \left(\int_{X} g^p w \right)^{1/p} \leq C K(w) \left(\int_{X} f^p w \right)^{1/p},
  \end{equation*}
  where

  \begin{equation*}
    K(w)= \left\{
            \begin{array}{ll}
              N([w]_{A_p}(2\|M\|_{L^p(w)})^{p_0-p}, & \hbox{if $p<p_0$;} \\
              N([w]_{A_p}^{\frac{p_0-1}{p-1}}(2\|M\|_{L^{p'}(w^{1-p'})})^{\frac{p-p_0}{p-1}}, & \hbox{if $p>p_0$.}
            \end{array}
          \right.
  \end{equation*}
  In particular, $K(w)\leq C_1 N(C_2 [w]_{A_p}^{\max{1,\frac{p_0-1}{p-1}}})$, for $w\in A_p$ where $C_2$ is an absolute constant.
\end{theorem}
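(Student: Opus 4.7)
The plan is to adapt the sharp Rubio de Francia extrapolation argument of Duoandikoetxea to the space of homogeneous type. All the ingredients transfer from the Euclidean case: duality of $L^p(w)$, H\"older's inequality, and Buckley's sharp bound $\|M\|_{L^q(u)} \leq C\,[u]_{A_q}^{1/(q-1)}$, which is known in this setting. The case $p=p_0$ is trivial, so I split into $p>p_0$ and $p<p_0$; in each I construct from $w$ an auxiliary weight $W\in A_{p_0}$ built from a Rubio de Francia iterate, estimate $[W]_{A_{p_0}}$ in terms of $[w]_{A_p}$ and an $\|M\|$-norm, and feed $W$ into the hypothesis.

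For $p>p_0$, set $\sigma=w^{1-p'}$ and use $\|g\|_{L^p(w)} = \sup \int gh\,d\mu$ over $h\geq 0$ with $\|h\|_{L^{p'}(\sigma)}\leq 1$. The Rubio de Francia algorithm in $L^{p'}(\sigma)$ produces $Rh\geq h$ with $\|Rh\|_{L^{p'}(\sigma)}\leq 2$ and $[Rh]_{A_1}\leq 2\|M\|_{L^{p'}(\sigma)}$. Set
\[
W = (Rh)^{(p-p_0)/(p-1)}\, w^{(p_0-1)/(p-1)}.
\]
A direct H\"older bound on the averages $\langle W\rangle_Q$ and $\langle W^{1-p_0'}\rangle_Q$, combined with the pointwise $A_1$ lower bound $Rh(x)\geq \langle Rh\rangle_Q/[Rh]_{A_1}$, yields $[W]_{A_{p_0}} \leq [Rh]_{A_1}^{(p-p_0)/(p-1)}[w]_{A_p}^{(p_0-1)/(p-1)}$; the key algebraic point is that $W^{1-p_0'}$ equals $(Rh)^{-\delta}\sigma$ with $\delta=(p-p_0)/((p-1)(p_0-1))$, making the $\langle Rh\rangle_Q$-powers cancel between the two averages. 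A second H\"older with exponents $p_0,p_0'$ gives $\int gh \leq \|g\|_{L^{p_0}(W)}\,\|Rh\|_{L^{p_0'}(W^{1-p_0'})}$, and the identity $(Rh)^{p_0'}W^{1-p_0'}=(Rh)^{p'}\sigma$ makes the second factor at most $2^{p'/p_0'}$. Applying the hypothesis to the first factor and a final H\"older with exponents $p/p_0$ and $p/(p-p_0)$ to reduce $\|f\|_{L^{p_0}(W)}$ to $\|f\|_{L^p(w)}$, the remaining powers of $\|Rh\|_{L^{p'}(\sigma)}$ add to exponent~$1$ and yield the claim.

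For $p<p_0$, I instead apply Rubio de Francia in $L^p(w)$ directly to $f$, producing $Rf\geq f$ with $\|Rf\|_{L^p(w)}\leq 2\|f\|_{L^p(w)}$ and $[Rf]_{A_1}\leq 2\|M\|_{L^p(w)}$, and set $W = w\,(Rf)^{-(p_0-p)}$. An analogous H\"older computation (now splitting $w^{1-p_0'}(Rf)^{(p_0-p)/(p_0-1)}$ with exponents $(p_0-1)/(p-1)$ and $(p_0-1)/(p_0-p)$) yields $[W]_{A_{p_0}}\leq [w]_{A_p}[Rf]_{A_1}^{p_0-p}$. Then H\"older with exponents $p_0/p$ and $p_0/(p_0-p)$ splits
\[
\int g^p w \leq \Bigl(\int g^{p_0}W\Bigr)^{p/p_0}\Bigl(\int (Rf)^p w\Bigr)^{(p_0-p)/p_0},
\]
and the hypothesis, together with $\int f^{p_0}W \leq \int f^p w$ coming from $Rf\geq f$, closes the estimate.

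The ``in particular'' bound follows by inserting Buckley's estimate: for $p>p_0$, $\|M\|_{L^{p'}(\sigma)}\leq C[w]_{A_p}$ makes the two exponents on $[w]_{A_p}$ combine to $1$, while for $p<p_0$, $\|M\|_{L^p(w)}\leq C[w]_{A_p}^{1/(p-1)}$ combines to total exponent $(p_0-1)/(p-1)$. The main technical obstacle I anticipate is the quantitative $A_{p_0}$ bound on $W$: I will avoid invoking a Jones-type factorization (whose sharp-constant version in the homogeneous setting would need separate care) and instead estimate the averages $\langle W\rangle_Q$ and $\langle W^{1-p_0'}\rangle_Q$ directly using only H\"older and the pointwise definition of $A_1$, both of which carry over to spaces of homogeneous type without change.
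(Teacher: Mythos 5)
Your proposal is correct and is in fact the same route the paper takes: the paper defers to Duoandikoetxea's original argument, supplying exactly the two ingredients you describe — the quantitative factorization bound $[W]_{A_{p_0}}$ (Lemma~\ref{FactorizationLemma}, proved precisely by the direct H\"older estimate plus the pointwise $A_1$ lower bound, with the $\langle Rh\rangle_Q$-powers cancelling as you note) and the Rubio de Francia algorithm (Lemma~\ref{Duo_RdF_algorithm}), together with the Hyt\"onen--Kairema version of Buckley's theorem for the ``in particular'' bound. The concern you flag about a Jones-type factorization is a non-issue: Duoandikoetxea's proof, and the paper's Lemma~\ref{FactorizationLemma}, never invoke the Jones factorization theorem; they prove $W\in A_{p_0}$ by exactly the elementary averaging computation you outline, which transfers verbatim to spaces of homogeneous type.
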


As application of the last result we obtain an estimate for $L^p(w)$ norms with $A_q$ weights for $q<p$.

\begin{cor}\label{DuoThm}
  Let $T$ be an operator such that

  \begin{equation*}
    ||Tf||_{L^p(w)} \leq C N([w]_{A_1}) ||f||_{L^p(w)}
  \end{equation*}
  for all weights $w\in A_1$ and all $1<p<\infty$, with $C$ independent of $w$. Then we have

  \begin{equation}\label{Ap_Duo_estimate}
    ||Tf||_{L^p(w)} \leq C N([w]_{A_q}) ||f||_{L^p(w)}
  \end{equation}
  for all $w\in A_q$ and $1\leq q < p < \infty$, with $C$ independent of $w$. In particular, \eqref{Ap_Duo_estimate} holds with $N(t)=t$ if $T$ is a Calder\'on--Zygmund operator.

\end{cor}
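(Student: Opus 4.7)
The plan is to derive the corollary as an application of the sharp extrapolation Theorem~\ref{Duo_ExtrapolationThm} taken at base exponent $p_0 = q$. The case $q = 1$ is precisely the given hypothesis, so I concentrate on $1 < q < p$.

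At level $p = q$ the given hypothesis reads $\|Tf\|_{L^q(w)} \leq C\,N([w]_{A_1})\|f\|_{L^q(w)}$ for every $w \in A_1$. Although Theorem~\ref{Duo_ExtrapolationThm} is formally stated for the full class $A_{p_0}$, its proof by duality together with the Rubio de Francia algorithm evaluates the underlying hypothesis only at weights of the form $Rh$ produced by the RdF iteration, and those weights automatically lie in $A_1$ with $A_1$-constant controlled via Buckley's estimate for $M$ in the SHT setting. Thus the $A_1$ hypothesis at level $q$ is enough to run the extrapolation. Inserting the Buckley bound $\|M\|_{L^{p'}(w^{1-p'})} \leq C[w]_{A_p}$ into the explicit form of $K(w)$ in Theorem~\ref{Duo_ExtrapolationThm} (case $p > p_0$), the two factors $[w]_{A_p}^{(q-1)/(p-1)}$ and $\|M\|_{L^{p'}(w^{1-p'})}^{(p-q)/(p-1)}$ multiply to a single power $[w]_{A_p}^{1}$, producing, for every $w \in A_p$,
\[
\|Tf\|_{L^p(w)} \leq C\,N(C'[w]_{A_p})\,\|f\|_{L^p(w)}.
\]

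Finally, since $A_q \subset A_p$ with $[w]_{A_p} \leq [w]_{A_q}$ whenever $w \in A_q$, and $N$ is increasing, this becomes $\|Tf\|_{L^p(w)} \leq C\,N(C'[w]_{A_q})\,\|f\|_{L^p(w)}$, which after absorbing the inner constant into $N$ is the claim. The final ``in particular'' assertion for Calder\'on--Zygmund operators amounts to verifying the hypothesis with $N(t) = t$; this is exactly the linear $A_1$ bound $\|Tf\|_{L^p(w)} \leq Cpp'\,[w]_{A_1}\|f\|_{L^p(w)}$ that follows from Corollary~\ref{LinearGrowthCor}.

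The delicate point I expect to be the main obstacle is justifying that Theorem~\ref{Duo_ExtrapolationThm} may be invoked from the $A_1$ hypothesis alone rather than the nominally stronger $A_{p_0}$ hypothesis it is formally stated with. This requires auditing the RdF-based proof to verify that every weight at which the hypothesis is tested is in fact $A_1$, and that the sharp tracking of its $A_1$-constant through Buckley's theorem for $M$ goes through in the SHT setting, which is available via the sharp reverse H\"older inequality (Lemma~\ref{SRHI}) and the $A_p$--$A_\infty$ relations used repeatedly in the earlier results.
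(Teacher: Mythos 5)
Your plan runs into a genuine obstacle exactly where you flag uncertainty. Theorem~\ref{Duo_ExtrapolationThm} at base exponent $p_0=q$ requires the hypothesis to hold for \emph{all} $w\in A_{q}$, whereas the corollary only supplies the bound for $w\in A_1$, a strictly smaller class when $q>1$. Your proposed remedy --- that the Rubio de Francia based proof of the extrapolation theorem only ever tests the hypothesis at the $A_1$ weights $Rh$ --- does not survive inspection. In the case $p>p_0$, the duality-plus-RdF argument produces an $A_1$ weight $u=Rh$ with $[u]_{A_1}\le 2\|M\|_{L^{p'}(w^{1-p'})}$, but it then invokes the hypothesis at the weight $\bigl(w^{p_0-1}u^{\,p-p_0}\bigr)^{1/(p-1)}$ furnished by Lemma~\ref{FactorizationLemma}(2); that weight lies in $A_{p_0}$ with the constant that appears in $K(w)$, and it belongs to $A_1$ only in the degenerate case $p_0=1$. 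For $p_0=q>1$ it is an honest $A_q$ weight depending nontrivially on $w$, so it is not controlled by the $A_1$ hypothesis and the extrapolation cannot be launched from $A_1$ alone.

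There is also an independent red flag confirming the gap: your intermediate conclusion, namely $\|Tf\|_{L^p(w)}\le C\,N(C'[w]_{A_p})\|f\|_{L^p(w)}$ for \emph{every} $w\in A_p$, is false in general. For a Calder\'on--Zygmund operator one has $N(t)=t$, and the asserted bound would then be linear in $[w]_{A_p}$ over the whole class $A_p$ for every $1<p<\infty$. But the sharp exponent on $A_p$ is $\max\bigl(1,\tfrac{1}{p-1}\bigr)$, which exceeds $1$ when $p<2$; a linear $[w]_{A_p}$ bound over all of $A_p$ would contradict this sharpness. The conclusion of Corollary~\ref{DuoThm} escapes this contradiction precisely because it is restricted to the subclass $A_q\subsetneq A_p$ and pays the (typically larger) constant $[w]_{A_q}\ge[w]_{A_p}$. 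So your step ``prove the statement for all of $A_p$ and then restrict to $A_q$'' cannot work: the restriction to $A_q$ is essential, not cosmetic, and the reduction via $[w]_{A_p}\le[w]_{A_q}$ inverts the logical order.

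For the record, the paper supplies no argument of its own and simply defers to Duoandikoetxea's proof in \cite{Duo}, which exploits the full strength of the hypothesis --- that it is available at \emph{every} exponent $1<p<\infty$ simultaneously --- rather than extrapolating from a single fixed base exponent. That freedom in the choice of the auxiliary exponent is what allows one to reach $A_q$ weights starting from the $A_1$ hypothesis. Your final ``in particular'' observation, that Calder\'on--Zygmund operators satisfy the hypothesis with $N(t)=t$ by Corollary~\ref{LinearGrowthCor}, is correct.
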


  We also prove the following bound for a Calder\'on--Zygmund operator that is useful to get sharp $A_2-A_{\infty}$ bounds for the commutators in spaces of homogeneous type.

  \begin{theorem}\label{mixedsharp}
    Let $T$ be a Calder\'on--Zygmund operator and $w\in A_2$. Then the following sharp weighted bound in an space of homogeneous type holds:
      \[\|T\|_{L^2(w)}\leq C[w]_{A_2}^{1/2}([w]_{A_{\infty}}+[\sigma]_{A_{\infty}})^{1/2}.\]
  \end{theorem}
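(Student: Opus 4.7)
\textbf{Proof plan for Theorem \ref{mixedsharp}.}

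My plan is to reduce the bound to a sparse operator and then run a standard mixed-constant duality argument, adapted to the homogeneous type setting. By the Lerner-type decomposition stated earlier in the paper (valid in spaces of homogeneous type via \cite{ACM}), it suffices to prove, with a constant uniform in the sparse family $S$ and dyadic grid $D$,
\[
\|T^{S}f\|_{L^{2}(w)} \;\leq\; C\,[w]_{A_{2}}^{1/2}\bigl([w]_{A_{\infty}}+[\sigma]_{A_{\infty}}\bigr)^{1/2}\,\|f\|_{L^{2}(w)},
\]
where $\sigma=w^{-1}$. The usual change of test function $f\mapsto f\sigma$ turns this into showing
\[
\|T^{S}(f\sigma)\|_{L^{2}(w)} \;\leq\; C\,[w]_{A_{2}}^{1/2}\bigl([w]_{A_{\infty}}+[\sigma]_{A_{\infty}}\bigr)^{1/2}\,\|f\|_{L^{2}(\sigma)},
\]
which by duality is equivalent to bounding the bilinear form $\sum_{Q\in S}\langle f\sigma\rangle_{Q}\langle gw\rangle_{Q}\,\mu(Q)$ with $g\in L^{2}(w)$.

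Next I would write $\langle f\sigma\rangle_{Q}=\langle f\rangle_{\sigma,Q}\,\sigma(Q)/\mu(Q)$ and similarly for $g$, so that the bilinear form becomes
\[
\sum_{Q\in S}\langle f\rangle_{\sigma,Q}\,\langle g\rangle_{w,Q}\,\frac{\sigma(Q)\,w(Q)}{\mu(Q)}.
\]
The $A_{2}$ hypothesis gives $\sigma(Q)w(Q)/\mu(Q)^{2}\leq [w]_{A_{2}}$, so pulling out a factor of $[w]_{A_{2}}^{1/2}$ and retaining $(\sigma(Q)w(Q))^{1/2}=[\,\sigma(Q)w(Q)/\mu(Q)^{2}\,]^{1/2}\mu(Q)$ yields
\[
\sum_{Q\in S}\langle f\sigma\rangle_{Q}\langle gw\rangle_{Q}\mu(Q) \;\leq\; [w]_{A_{2}}^{1/2}\sum_{Q\in S}\bigl(\langle f\rangle_{\sigma,Q}\sigma(Q)^{1/2}\bigr)\bigl(\langle g\rangle_{w,Q}w(Q)^{1/2}\bigr).
\]
Cauchy--Schwarz then separates the sums into two weighted Carleson-type sums, one in $\sigma$ and one in $w$.

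The main remaining step, and the one I expect to be the technical core, is the weighted Carleson embedding in a space of homogeneous type:
\[
\sum_{Q\in S}\langle f\rangle_{\sigma,Q}^{2}\,\sigma(Q) \;\leq\; C\,[\sigma]_{A_{\infty}}\,\|f\|_{L^{2}(\sigma)}^{2},
\]
and the analogous inequality with $w$ in place of $\sigma$. This reduces to two ingredients: (i) the sequence $\{\sigma(Q)\}_{Q\in S}$ is a $\sigma$-Carleson sequence with constant $\lesssim [\sigma]_{A_{\infty}}$, which in turn follows from the Fujii--Wilson definition combined with the sparseness of $S$ and the SHT dyadic structure of Theorem \ref{dyadic}; and (ii) a standard Carleson embedding for the dyadic weighted maximal operator $M^{d}_{\sigma}$, whose $L^{2}(\sigma)$-boundedness goes through verbatim on $X$ using the covering/dyadic properties already in place. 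The sharp reverse H\"older lemma (Lemma \ref{SRHI}) is what makes the $[\sigma]_{A_\infty}$ dependence sharp and is the one piece that genuinely uses the SHT machinery rather than being a rote transcription of the Euclidean proof.

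Combining the two Carleson embeddings with the $[w]_{A_{2}}^{1/2}$ factor produces the geometric-mean bound $C\,[w]_{A_{2}}^{1/2}\bigl([w]_{A_{\infty}}[\sigma]_{A_{\infty}}\bigr)^{1/2}$, and the arithmetic--geometric mean inequality $(ab)^{1/2}\leq (a+b)/2$ upgrades this to the claimed sum form $C\,[w]_{A_{2}}^{1/2}([w]_{A_{\infty}}+[\sigma]_{A_{\infty}})^{1/2}$, finishing the proof.
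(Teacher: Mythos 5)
Your proposal is correct, but it takes a genuinely different (and more self-contained) route than the paper. The paper's proof is essentially a citation: it reduces to a sparse operator via \cite{ACM}, then invokes the two-weight testing characterization for positive dyadic operators due to Treil \cite{T} (as presented in \cite[Sect.~2D]{Hyt2}), and refers to \cite[Sect.~5A]{Hyt2} for the verification of the Sawyer-type testing conditions $\|S_Q(\sigma\chi_Q)\|_{L^2(w)}\lesssim\|\chi_Q\|_{L^2(\sigma)}$ and its dual. You instead bypass the abstract testing theorem entirely: after the same sparse reduction, you dualize to the bilinear form $\sum_{Q\in S}\langle f\rangle_{\sigma,Q}\langle g\rangle_{w,Q}\sigma(Q)w(Q)/\mu(Q)$, extract $[w]_{A_2}^{1/2}$, apply Cauchy--Schwarz, and finish with two weighted Carleson embeddings. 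The Carleson condition $\sum_{Q\in S,\,Q\subseteq R}\sigma(Q)\lesssim[\sigma]_{A_\infty}\sigma(R)$ indeed follows from sparsity (disjoint majorizing sets $E(Q)$ with $\mu(E(Q))\geq\mu(Q)/2$) together with the Fujii--Wilson definition of $[\sigma]_{A_\infty}$, and the Carleson embedding itself rests only on the universal $L^2(\sigma)$-bound of the dyadic maximal function $M^d_\sigma$, which is weight-independent. These two routes are of course closely related, since the proof of Treil's theorem is itself a Carleson-embedding argument, but yours has the advantage of being explicit and self-contained rather than an appeal to an external black box. One small inaccuracy in your discussion: the sharp reverse H\"older inequality (Lemma~\ref{SRHI}) is not actually needed for the $[\sigma]_{A_\infty}$ Carleson constant here. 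The Fujii--Wilson definition $[\sigma]_{A_\infty}=\sup_Q\sigma(Q)^{-1}\int_Q M(\sigma\chi_Q)\,d\mu$ yields the Carleson condition directly, with no reverse H\"older detour; Lemma~\ref{SRHI} is used elsewhere in the paper (Corollaries~\ref{LinearGrowthCor} and \ref{WeakEstimateCor}, and Lemma~\ref{JNlemma}) but plays no role in Theorem~\ref{mixedsharp}. This misattribution does not affect the validity of your argument.
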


  And finally as a corollary of the previous result and using a precise version of the John--Nirenberg inequality proved in Section \ref{Subsect44}, we prove the following generalized sharp weighted bound for the k-th iterate commutator of a Calder\'on--Zygmund operator.

  \begin{cor}\label{commutator_thm}
    Let $T$ be a Calder\'on--Zygmund operator defined on a space of homogeneous type and $b\in BMO$. Then

     \begin{equation}\label{comm2}
      \|T^k_b(f)\|_{L^2(w)} \leq C [w]_{A_2}^{1/2} ([w]_{A_{\infty}}+[\sigma]_{A_{\infty}})^{k+1/2} \|b\|_{BMO}\|f\|_{L^2(w)}.
     \end{equation}
    where $C$ is an absolute constant. In particular, for the classical commutator we get the following estimate
     \begin{equation}\label{comm1}
       \|T_b(f)\|_{L^2(w)} \leq C [w]_{A_2}^{1/2} ([w]_{A_{\infty}}+[\sigma]_{A_{\infty}})^{3/2} \|b\|_{BMO} \|f\|_{L^2(w)}.
     \end{equation}
  \end{cor}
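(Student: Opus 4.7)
The plan is to combine the classical Coifman–Rochberg–Weiss conjugation trick with the sharp mixed bound of Theorem \ref{mixedsharp} and with a sharp John–Nirenberg inequality adapted to the homogeneous setting (the one the authors announce will appear in Section \ref{Subsect44}).

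For $z\in\mathbb{C}$ and $b\in BMO$, set
$$F(z):=e^{zb}\,T(e^{-zb}f).$$
Expanding $e^{\pm zb}$ in power series one checks that $F$ is entire in $z$ with $F^{(k)}(0)=k!\,T_b^k f$, so Cauchy's formula yields
$$T_b^k f=\frac{1}{2\pi i}\oint_{|z|=\epsilon}\frac{F(z)}{z^{k+1}}\,dz$$
for every $\epsilon>0$. Taking the $L^2(w)$-norm and applying Minkowski's integral inequality gives
$$\|T_b^k f\|_{L^2(w)}\leq \frac{1}{2\pi\,\epsilon^{k}}\int_0^{2\pi}\|F(\epsilon e^{i\theta})\|_{L^2(w)}\,d\theta.$$
The task reduces to bounding $\|F(z)\|_{L^2(w)}$ uniformly on the circle $|z|=\epsilon$ and then optimizing $\epsilon$.

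For fixed $z$, write $W:=e^{2\Re(z)\,b}\,w$. Then
$$\|F(z)\|_{L^2(w)}^2=\int_X|T(e^{-zb}f)|^2\,W\,d\mu,$$
and the identity $|e^{-zb}|^2 W=w$ shows $\|e^{-zb}f\|_{L^2(W)}=\|f\|_{L^2(w)}$. Since $W$ is a real weight, applying Theorem \ref{mixedsharp} to it gives
$$\|F(z)\|_{L^2(w)}\leq C\,[W]_{A_2}^{1/2}\bigl([W]_{A_\infty}+[W^{-1}]_{A_\infty}\bigr)^{1/2}\|f\|_{L^2(w)}.$$
The sharp John–Nirenberg inequality enters in the next step: it will imply that whenever
$$\epsilon\;\sim\;\frac{1}{\|b\|_{BMO}\bigl([w]_{A_\infty}+[\sigma]_{A_\infty}\bigr)},$$
the exponential perturbation $e^{2\Re(z)b}$ is so close to $1$ on each cube that
$$[W]_{A_2}\lesssim [w]_{A_2},\quad [W]_{A_\infty}\lesssim [w]_{A_\infty},\quad [W^{-1}]_{A_\infty}\lesssim [\sigma]_{A_\infty}$$
uniformly for $|z|=\epsilon$. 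Substituting into the previous inequality and then into the Cauchy estimate yields
$$\|T_b^k f\|_{L^2(w)}\lesssim \frac{1}{\epsilon^{k}}\,[w]_{A_2}^{1/2}\bigl([w]_{A_\infty}+[\sigma]_{A_\infty}\bigr)^{1/2}\|f\|_{L^2(w)},$$
and with the above value of $\epsilon$ this is exactly \eqref{comm2}; \eqref{comm1} is just the case $k=1$.

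The main obstacle is the last step: the sharp John–Nirenberg inequality in SHT. In $\mathbb{R}^n$ one knows (see \cite{HP}) that for $|t|\lesssim 1/(\|b\|_{BMO}[w]_{A_\infty})$ one has the ``self-improving'' estimate $\avgint_Q e^{t(b-b_Q)}w\leq 2\avgint_Q w$, with a symmetric statement for $\sigma$; from this the $A_2$ constant of the perturbed weight is directly controlled. Transferring this to a space of homogeneous type requires working with the dyadic version of $[w]_{A_\infty}$ rather than the ball version, to avoid losing a $w$-dependent factor in the comparison (as emphasized in Section \ref{Sect2}); the technical heart of the argument is checking that the sharp reverse Hölder inequality of Lemma \ref{SRHI} behaves correctly under exponential perturbations so that the quantities $[W]_{A_\infty}$ and $[W^{-1}]_{A_\infty}$ can be estimated with the correct (linear) dependence on $[w]_{A_\infty}$ and $[\sigma]_{A_\infty}$.
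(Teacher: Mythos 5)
Your proposal matches the paper's own proof essentially step for step: conjugate $T$ to get $T_z f = e^{zb}T(e^{-zb}f)$, extract $T_b^k$ via the Cauchy integral formula, apply Minkowski's integral inequality, invoke Theorem \ref{mixedsharp} for the perturbed weight $we^{2\Re z\, b}$, and control $[we^{2\Re z\,b}]_{A_2}$ and the associated $A_\infty$ constants via the sharp John--Nirenberg inequality combined with the dyadic sharp reverse H\"older inequality (this is precisely the content of Lemmas \ref{JohnNirenberg_lemma}, \ref{JNlemma}, and \ref{JNlemma_Ainfy}), then choose $\epsilon$ at the boundary of the admissible range. The only point worth flagging is that tracking your computation literally produces $\|b\|_{BMO}^k$ rather than $\|b\|_{BMO}$, which is in fact what homogeneity forces; the exponent $1$ in the statement of \eqref{comm2} appears to be a typographical slip in the paper rather than an error in your argument.
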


\begin{remark}
The optimality of the exponents in these results follow from the corresponding results in $\R^n$ which were obtained by building specific examples of weights for each operator. However, a new approach to derive the optimality of the exponents  without building explicit examples can be found in \cite{LPR}.
\end{remark}


   \section{Proofs}\label{Sect4}

      \subsection{Proofs of Theorem \ref{LinearGrowthThm} and Corollary \ref{LinearGrowthCor}}\label{Subsect41}

First, we will prove the next inequality of Coifman--Fefferman type.

\begin{prop}\label{Coifman_Fefferman_prop}
  Let $T$ be a Calder\'on--Zygmund operator and let $1<p<\infty$. If $w\in A_p$ then

  \begin{equation}\label{Coifman_Fefferman_ineq}
    \int_X |Tf(x)| w(x) d\mu(x) \leq C [w]_{A_p} \int_X Mf(x) w(x) d\mu(x),
  \end{equation}
  where $C$ is an absolute constant that depends also on $T$.
\end{prop}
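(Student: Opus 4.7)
The plan is to use the sparse domination recalled earlier in the paper (applied with the Banach function space $Y = L^1(w)$) to reduce matters to a single sparse operator: it suffices to prove that for every sparse family $S$ and every $f \geq 0$,
\[
\int_X T^S f \cdot w\, d\mu \;\leq\; C[w]_{A_\infty} \int_X Mf \cdot w\, d\mu,
\]
because the standard comparison $[w]_{A_\infty} \leq c[w]_{A_p}$ (valid for the Fujii--Wilson constant) then yields the stated inequality. Fix such a sparse family $S$ and let $\{E_Q\}_{Q\in S}$ be the pairwise disjoint subsets with $\mu(E_Q) \geq \mu(Q)/2$ provided by sparseness.

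The first key step is a Carleson packing estimate for the sequence $\{w(Q)\}_{Q\in S}$: for every dyadic cube $R$,
\[
\sum_{Q\in S,\, Q \subseteq R} w(Q) \;\leq\; C[w]_{A_\infty}\, w(R).
\]
For $Q \subseteq R$ and $x \in Q$ one has the trivial bound $\avgint_Q w \leq M^d(w\chi_R)(x)$, so $w(Q) \leq 2\int_{E_Q} M^d(w\chi_R)\, d\mu$. Summing over the disjoint $E_Q$ inside $R$, and then using the pointwise bound $M^d \lesssim M$ (which follows from the sandwich property of Theorem~\ref{dyadic} and doubling) together with the definition of $[w]_{A_\infty}$,
\[
\sum_{Q \subseteq R} w(Q) \;\leq\; 2\int_R M^d(w\chi_R)\, d\mu \;\leq\; C\int_R M(w\chi_R)\, d\mu \;\leq\; C[w]_{A_\infty} w(R).
\]

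The second step is a principal-cube (stopping-time) argument that converts this packing estimate into the desired weighted inequality. Build a stopping family $\Pi \subseteq S$ iteratively: inside each $R \in \Pi$, include the maximal cubes $Q \subsetneq R$ in $S$ with $\avgint_Q f > 2 \avgint_R f$. For $Q \in S$, let $\pi(Q) \in \Pi$ denote its minimal ancestor; by construction $\avgint_Q f \leq 2 \avgint_{\pi(Q)} f$, and therefore
\[
\sum_{Q\in S} \avgint_Q f \cdot w(Q) \;\leq\; 2 \sum_{R\in\Pi} \avgint_R f \sum_{\pi(Q)=R} w(Q) \;\leq\; C[w]_{A_\infty} \sum_{R\in\Pi} \avgint_R f \cdot w(R).
\]
Because the averages $\avgint_R f$ grow by at least a factor of $2$ along descending chains in $\Pi$, for each $x$ the sum $\sum_{R\in\Pi,\, R\ni x} \avgint_R f$ is a geometric series whose largest term is at most $Mf(x)$, hence bounded by $2Mf(x)$. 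Fubini then gives $\sum_{R\in\Pi} \avgint_R f \cdot w(R) \leq 2 \int_X Mf \cdot w\, d\mu$, which combined with the previous display completes the proof.

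The main obstacle I anticipate is purely bookkeeping: the $A_\infty$ constant in the paper is defined with the standard (non-dyadic) Hardy--Littlewood maximal function but the supremum over dyadic cubes, so one must be careful not to swap between dyadic and ball-based objects in a way that introduces a $w$-dependent factor (as warned in Section~\ref{Sect2}). The sandwich property of Theorem~\ref{dyadic} and doubling of $\mu$ suffice to compare $M^d$ and $M$ by absolute constants, which is all that is needed here; the rest of the stopping-time argument transplants unchanged from the Euclidean setting.
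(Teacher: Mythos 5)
Your proof is correct, but it takes a genuinely different route from the paper and in fact establishes a strictly stronger conclusion. The paper's argument, after the same sparse-domination reduction, is a one-step application of the elementary inequality $\left(\mu(A)/\mu(Q)\right)^p \leq [w]_{A_p}\,w(A)/w(Q)$ (Lemma~\ref{Aq_ineq_Lemma}) with $A = E(Q)$: since $\mu(E(Q)) \geq \mu(Q)/2$ this gives $w(Q) \leq 2^p[w]_{A_p}\,w(E(Q))$, whence $\sum_Q \avgint_Q|f|\cdot w(Q) \leq 2^p[w]_{A_p}\sum_Q\int_{E(Q)}Mf\,w\,d\mu \leq 2^p[w]_{A_p}\int_X Mf\,w\,d\mu$. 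Your argument instead runs through a Carleson packing estimate for $\{w(Q)\}_{Q\in S}$ and a principal-cube stopping-time construction, yielding the bound with $[w]_{A_\infty}$ in place of $[w]_{A_p}$; this is sharper, since $[w]_{A_\infty}\leq c[w]_{A_p}$ with an absolute $c$ (a fact from~\cite{HP}, valid here, though worth citing explicitly since the paper never records it), and it also eliminates the $2^p$ that the paper's proof silently folds into $C$. The price is a longer argument, and two routine technicalities you should acknowledge: the stopping family $\Pi$ must be initialized from the maximal cubes of $S$, which requires reducing to the case where $S$ has maximal elements (standard, e.g.\ by restricting to $S\cap\mathscr{D}(Q_0)$ and letting $Q_0\uparrow X$); and the chain of inequalities $\avgint_Q w \leq M^d(w\chi_R) \lesssim M(w\chi_R)$ leans on the sandwich property and doubling of $\mu$, which as you correctly note is a $w$-independent pointwise comparison, so no forbidden $w$-dependent factor appears. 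For the applications in this paper (Lemma~\ref{TrickyLemma} invokes the proposition only with $p=3$) the improvement is immaterial, but the $A_\infty$ version is the natural statement and your proof is the standard way to obtain it.
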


Before proving Proposition \ref{Coifman_Fefferman_prop} we need to recall the following lemma that will allow us to obtain the precise constant in \eqref{Coifman_Fefferman_ineq} and that can be found in \cite[Ex. 9.2.5]{Graf2} as well as in \cite[p. 388]{GCRF} in the context of two weights.

\begin{lemma}\label{Aq_ineq_Lemma}

  Let $\mu$ be a positive doubling measure and $1<p<\infty$. If $w\in A_p$ then

  \begin{equation}\label{Aq_inequality}
    \left(\frac{\mu(A)}{\mu(Q)}\right)^p \leq [w]_{A_p} \frac{w(A)}{w(Q)},
  \end{equation}
  where $A\subset Q$ is a $\mu$-measurable set and $Q$ is a cube.

\end{lemma}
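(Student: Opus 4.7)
The plan is to derive the inequality directly from the $A_p$ definition via a single application of Hölder's inequality, without invoking any deeper structural property of the space; the argument is purely measure-theoretic and does not even use the doubling hypothesis (doubling is needed only to ensure that the $A_p$ constant is well-behaved on the class of cubes under consideration). This is the standard textbook derivation, adapted here only in that our ``cube'' $Q$ is a dyadic cube in a space of homogeneous type rather than a Euclidean cube.

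First I would write $\mu(A) = \int_A 1 \, d\mu = \int_A w^{1/p}\,w^{-1/p} \, d\mu$ and apply Hölder with conjugate exponents $p$ and $p'$. Since $-p'/p = 1 - p'$, this gives
\[
\mu(A) \le \left(\int_A w\, d\mu\right)^{1/p}\left(\int_A w^{1-p'}\,d\mu\right)^{1/p'}.
\]
Next I would use monotonicity of the integral ($A\subset Q$) to enlarge the second factor from $A$ to $Q$:
\[
\mu(A) \le w(A)^{1/p}\left(\int_Q w^{1-p'}\,d\mu\right)^{1/p'}.
\]

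Then I would raise both sides to the $p$-th power, use $p/p' = p-1$, and divide by $\mu(Q)^p$. Multiplying and dividing by $w(Q)$ on the right-hand side rearranges the result into
\[
\left(\frac{\mu(A)}{\mu(Q)}\right)^p \le \frac{w(A)}{w(Q)}\left(\frac{1}{\mu(Q)}\int_Q w\,d\mu\right)\left(\frac{1}{\mu(Q)}\int_Q w^{1-p'}\,d\mu\right)^{p-1},
\]
at which point the product of averages on the right is at most $[w]_{A_p}$ by the very definition \eqref{Ap_mu}, yielding the claim.

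There is essentially no obstacle: the only things one needs to keep track of are the exponent bookkeeping $-p'/p = 1-p'$ and $p/p' = p-1$, and the fact that the $A_p$ supremum defining $[w]_{A_p}$ in \eqref{Ap_mu} is taken over the same family of dyadic cubes as the $Q$ appearing in the statement (so no intermediate doubling factor appears in the final bound). If one wanted to state the lemma for balls rather than cubes, the identical Hölder computation still applies, with $[w]_{A_p}$ interpreted over balls; the doubling property is then what guarantees the two versions of $[w]_{A_p}$ are comparable up to absolute constants.
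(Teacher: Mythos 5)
Your proof is correct and is the standard Hölder-inequality argument; the paper does not actually give a proof of this lemma but simply cites it to Grafakos (Ex.\ 9.2.5) and Garc\'ia-Cuerva--Rubio de Francia, where the same computation appears. Your exponent bookkeeping ($-p'/p = 1-p'$, $p/p'=p-1$) is right, the enlargement $A\subset Q$ is used in the correct factor, and you are also right that the doubling hypothesis is not needed for the inequality itself --- it is stated only because the ambient setting is a space of homogeneous type.
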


\begin{proof}[Proof of Proposition \ref{Coifman_Fefferman_prop}]

  We have that

  \begin{equation*}
    \int_X |Tf(x)|w(x) d\mu(x) \leq C_{X,T}  \sup_{S,D}\int_X \left|\sum_{Q\in S}\left( \avgint_Q f(x)\right) \chi_Q(x) \right|w(x)d\mu(x)
  \end{equation*}
  by the formula of Lerner proved in the homogeneous setting in \cite{ACM}. Using \eqref{Aq_inequality}, we obtain that

  \begin{equation*}\begin{split}
    \int_X |Tf(x)|w(x)d\mu(x) &\leq  C_{X,T} \sup_{S,D}\sum_{Q\in S}\left( \avgint_Q|f(x)|\right) w(Q) \\
                    &\leq C_{X,T}\sup_{S,D} [w]_{A_p}\sum_{Q\in S}\left( \avgint_Q f(x)\right) w(E(Q)) \\
                    &\leq C_{X,T} [w]_{A_p} \sup_{D,S}\sum_{Q\in S}\int_{E(Q)} Mf(x) w(x) d\mu(x),
  \end{split}\end{equation*}

  Finally, since the family $E(Q)$ is disjoint, we can bound the above by

  \begin{equation*}\begin{split}
    \int_X|Tf(x)|w(x)d\mu(x) &\leq C [w]_{A_p} \sup_{D,S}\int_X Mf(x) w(x) d\mu(x) \\
    &\leq C [w]_{A_p} \int_X Mf(x) w(x) d\mu(x),
  \end{split}\end{equation*}
  where $C$ is an absolute constant that depends also on $T$, proving \eqref{Coifman_Fefferman_ineq} as wanted.

\end{proof}

  Now we prove the following lemma.

  \begin{lemma}\label{TrickyLemma}

  Let $w$ be any weight and let $1\leq p,r < \infty$. Then there is a constant $C=C_{X,T}$ such that

  \begin{equation*}
    ||Tf||_{L^p((M_rw)^{1-p})} \leq C p ||Mf||_{L^p((M_rw)^{1-p})}.
  \end{equation*}

  \end{lemma}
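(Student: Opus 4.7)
The plan is to dualize the $L^p((M_rw)^{1-p})$-norm, apply a Rubio de Francia iteration to produce an $A_1$-majorant of the dual test function, and then invoke Proposition \ref{Coifman_Fefferman_prop}. Writing $v:=(M_rw)^{1-p}$ one has $v^{1-p'}=M_rw$, and the substitution $\phi=hv$ converts the $L^p(v)$-duality into
\[
\|Tf\|_{L^p(v)} \;=\; \sup \int |Tf|\,\phi\,d\mu,
\]
the supremum being taken over $\phi\ge 0$ with $\|\phi\|_{L^{p'}(M_rw)}\le 1$. It therefore suffices to estimate $\int |Tf|\,\phi\,d\mu$ by $Cp\,\|Mf\|_{L^p(v)}$ for every such $\phi$.

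For a fixed $\phi$ I construct the Rubio de Francia iterate
\[
\Phi \;:=\; \sum_{k\ge 0}\frac{M^k\phi}{\bigl(2\,\|M\|_{L^{p'}(M_rw)}\bigr)^k},
\]
whose convergence requires $M$ to be bounded on $L^{p'}(M_rw)$. This in turn follows from the SHT version of Coifman--Rochberg applied to $w^r$ with exponent $1/r$, which places $M_rw$ in $A_1$, hence in $A_{p'}$. By construction, $\Phi\ge\phi$ pointwise, $\|\Phi\|_{L^{p'}(M_rw)}\le 2$, and $[\Phi]_{A_1}\le 2\,\|M\|_{L^{p'}(M_rw)}$. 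Treating $\Phi$ as an $A_p$-weight with $[\Phi]_{A_p}\le [\Phi]_{A_1}$, Proposition \ref{Coifman_Fefferman_prop} together with H\"older's inequality for the conjugate weights $v$ and $v^{1-p'}=M_rw$ yields
\[
\int |Tf|\,\phi\,d\mu \;\le\; \int |Tf|\,\Phi\,d\mu \;\le\; C[\Phi]_{A_1}\int Mf\cdot\Phi\,d\mu \;\le\; 2C\,\|M\|_{L^{p'}(M_rw)}\,\|Mf\|_{L^p(v)}.
\]
Taking the supremum over $\phi$ gives $\|Tf\|_{L^p(v)}\le C\,\|M\|_{L^{p'}(M_rw)}\,\|Mf\|_{L^p(v)}$.

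The main obstacle is the final step: showing $\|M\|_{L^{p'}(M_rw)}\le Cp$ with an absolute constant. This is a sharp Buckley-type one-weight bound for the maximal function applied with the $A_1$ weight $M_rw$; the linear-in-$p$ dependence is exactly what the sharp $L^{p'}$--$A_1$ estimate for $M$ provides, while the residual dependence on $r$ coming through the Coifman--Rochberg constant of $M_rw$ is absorbed into $C=C_{X,T}$. All three ingredients---Proposition \ref{Coifman_Fefferman_prop} (which itself rests on Lerner's sparse domination in SHT), Coifman--Rochberg, and Buckley's sharp maximal-function bound---are available in spaces of homogeneous type, so the argument transfers directly from the classical Euclidean setting.
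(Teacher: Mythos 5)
Your plan correctly dualizes and correctly identifies the three ingredients (Coifman--Fefferman, Rubio de Francia iteration, Coifman--Rochberg), but the final step has a genuine gap that makes the constant $r$-dependent, contradicting the lemma's claim that $C=C_{X,T}$.

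You build the unweighted RdF iterate $\Phi=\sum_k M^k\phi/(2\|M\|_{L^{p'}(M_rw)})^k$, obtaining $[\Phi]_{A_1}\le 2\|M\|_{L^{p'}(M_rw)}$, and then apply Proposition \ref{Coifman_Fefferman_prop} with the $A_p$ (or $A_1$) constant of $\Phi$ as the leading factor. That factor is therefore controlled by $\|M\|_{L^{p'}(M_rw)}$. The sharp Buckley bound gives $\|M\|_{L^{p'}(v)}\lesssim p\,[v]_{A_{p'}}^{1/(p'-1)}=p\,[v]_{A_{p'}}^{p-1}$, and Coifman--Rochberg gives $[M_rw]_{A_{p'}}\le [M_rw]_{A_1}\lesssim r'$. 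So your chain produces $\|M\|_{L^{p'}(M_rw)}\lesssim p\,(r')^{p-1}$. The factor $(r')^{p-1}$ blows up as $r\to 1^+$ (and for fixed $r$ grows with $p$), so it cannot be ``absorbed into $C=C_{X,T}$''; the lemma is stated with a constant independent of $r$ precisely because the theorem it feeds into ultimately takes $r'\approx[w]_{A_\infty}$, and an extra $(r')^{p-1}$ would destroy the sharp dependence.

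The paper avoids this by running a \emph{weighted} Rubio de Francia algorithm (adapted to the weight $v=M_rw$, see \cite[Lemma 4.4]{Per1}), which produces $R(h)$ with the stronger property $[R(h)(M_rw)^{1/p'}]_{A_1}\le C_Xp$ (not merely $[R(h)]_{A_1}$ bounded). It then factors $R(h)=[R(h)(M_rw)^{1/p'}]\cdot[(M_rw)^{1/(2p')}]^{-2}$, applies the product rule $[w_1w_2^{1-q}]_{A_q}\le[w_1]_{A_1}[w_2]_{A_1}^{q-1}$ with $q=3$, and uses Coifman--Rochberg in the form $(M_rw)^{1/(2p')}=(Mw^r)^{1/(2p'r)}\in A_1$ with constant $\lesssim(2p'r)'$. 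The crucial observation is $(2p'r)'\le 2$ since $2p'r\ge 2$, so $[R(h)]_{A_3}\le C_Xp\bigl((2p'r)'\bigr)^2\le C_Xp$ \emph{uniformly in $r$}. That uniformity is exactly what your version loses: applying Coifman--Rochberg to $M_rw$ itself (with exponent $1/r$) yields an $r'$-dependent $A_1$ constant, whereas applying it to $(M_rw)^{1/(2p')}$ (with exponent $1/(2p'r)$) yields a bounded one. Without this device your argument does not prove the stated lemma.
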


  The proof of this lemma is based in a variation of the Rubio de Francia algorithm that could be found in \cite{Per1}.

  \begin{proof}[Proof of Lemma \ref{TrickyLemma}]

    We want to prove

    \begin{equation*}
      \left|\left| \frac{Tf}{M_r w}\right|\right|_{L^p(M_r w)} \leq C p \left|\left| \frac{Mf}{M_r w}\right|\right|_{L^p(M_r w)}.
    \end{equation*}

    By duality we have

    \begin{equation*}
      \left|\left| \frac{Tf(x)}{M_r w(x)}\right|\right|_{L^p(M_r w)} = \left| \int_X Tf(x) h(x) d\mu(x) \right| \leq \int_X |Tf(x)| |h(x)| d\mu(x),
    \end{equation*}

    for some $h$ such that $||h||_{L^{p'}(M_r w)}=1$. By a variation of Rubio de Francia's algorithm addapted to spaces of homogeneous type (see \cite[Lemma 4.4]{Per1}) with $s=p'$ and $v=M_r w$ there exists an operator $R$ such that

    \begin{enumerate}
      \item $0\leq h\leq R(h)$.
      \item $||R(h)||_{L^{p'}(M_r w)} \leq 2 C_{X,p} ||h||_{L^{p'}(M_r w)}$.
      \item $[R(h)(M_r w)^{1/p'}]_{A_1}\leq C_X p$.
    \end{enumerate}

    Let us recall two facts: First, if two weights $w_1,w_2\in A_1$, then $w=w_1w_2^{1-p}\in A_p$ and $[w]_{A_p}\leq [w_1]_{A_1}[w_2]_{A_1}^{p-1}$. Second, by the Coifman-Rochberg theorem in spaces of homogeneous type \cite[Prop. 5.32]{CMP}, if $r>1$ then $(Mf)^{1/r}\in A_1$ and $[(Mf)^{1/r}]_{A_1}\leq C_X r'$. Combining these facts and $(3)$ we obtain

    \begin{equation*}\begin{split}
      [R(h)]_{A_3} &=    [R(h) (M_r w)^{1/p'} ((M_r w)^{1/2p'})^{-2}]_{A_3} \\
               &\leq [R(h) (M_r w)^{1/p'}]_{A_1} [(M_r w)^{1/2p'}]_{A_1}^2 \\
               &\leq C_X p((2p'r)')^2 \leq C_X p,
    \end{split}\end{equation*}
since $(2p'r)'<2$.

    Thus, by Proposition \ref{Coifman_Fefferman_ineq} and using $(1)$ and $(2)$, we obtain

    \begin{equation*}\begin{split}
      \int_X |Tf(x)| h(x) d\mu(x) &\leq \int_X |Tf(x)| R(h)(x) d\mu(x) \\
                       &\leq C [R(h)]_{A_3} \int_X M(f)(x)R(h)(x) d\mu(x) \\
                       &=    C [R(h)]_{A_3} \int_X M(f)(x)R(h)(x)(M_r w(x))^{-1} M_r w(x) d\mu(x) \\
                       &\leq C [R(h)]_{A_3} \left|\left| \frac{M(f)}{M_r w}\right|\right|_{L^p(M_r w)} ||R(h)||_{L^{p'}(M_r w)} \\
                       &\leq C [R(h)]_{A_3} \left|\left| \frac{M(f)}{M_r w}\right|\right|_{L^p(M_r w)} ||h||_{L^{p'}(M_r w)} \\
                       &\leq C p \left|\left| \frac{M(f)}{M_r w}\right|\right|_{L^p(M_r w)},
    \end{split}\end{equation*}

    and we are done.

\end{proof}

And finally using Lemma \ref{TrickyLemma} applied to $T^*$ we can prove Theorem \ref{LinearGrowthThm}.

\begin{proof}[Proof of Theorem \ref{LinearGrowthThm}]

First we are going to prove the following inequality

  \begin{equation}\label{MainLemmaIneq}
    ||Tf||_{L^p(w)} \leq C p p' (r')^{1-\frac{1}{pr}} ||f||_{L^p(M_rw)},
  \end{equation}
from which follows \eqref{LinearGrowthIneq}. Indeed,  it is clear that

\begin{equation*}
  1-\frac{1}{pr} = 1 -\frac{1}{p} + \frac{1}{p} -\frac{1}{pr} = \frac{1}{p'} - \frac{1}{pr'},
\end{equation*}
and since $t^{1/t}\leq 2$ when $t\geq 1$, it follows that

\begin{equation*}
(r')^{1-\frac{1}{pr}} = (r')^{\frac{1}{p'} - \frac{1}{pr'}} \leq 2^{-1/p} (r')^{\frac{1}{p'}}.
\end{equation*}

Next consider the dual estimate of \eqref{MainLemmaIneq}, namely

\begin{equation*}
  ||T^* f ||_{L^{p'}((M_r w)^{1-p'})} \leq C pp' (r')^{1-\frac{1}{pr}} ||f||_{L^{p'}(w^{1-p'})},
\end{equation*}

where $T^*$ is the adjoint operator of $T$. Then, since $T$ is also a Calder\'on--Zygmund operator we are under assumptions of Lemma \ref{TrickyLemma} for $T^*$, we get

\begin{equation*}
  \left|\left| \frac{T^* f}{M_r w}\right|\right|_{L^{p'}(M_r w)} \leq C p' \left|\left| \frac{Mf}{M_r w} \right|\right|_{L^{p'}(M_r w)}.
\end{equation*}

Using H\"older's inequality with exponent $pr$ we have

\begin{equation*}\begin{split}
  \frac{1}{|Q|} \int_Q f &w^{-1/p} w^{1/p} d\mu \\
  &\leq \left(\frac{1}{|Q|} \int_Q w^r d\mu \right)^{1/pr} \left(\frac{1}{|Q|} \int_Q (f w^{-1/p})^{(pr)'}d\mu(x)\right)^{1/(pr)'},
\end{split}\end{equation*}

and hence,

\begin{equation*}
  M(f)^{p'} \leq (M_r w)^{\frac{p'}{p}} M((fw^{-1/p})^{(pr)'})^{p'/(pr)'}.
\end{equation*}

From this and the unweighted maximal theorem in spaces of homogeneous type that can be easily obtained from the proof in \cite{Graf1}, changing the dimensional constant for a geometric one, we obtain

\begin{equation*}\begin{split}
  \left( \int_X \frac{M(f)^{p'}}{(M_r w)^{p'-1}} d\mu \right) ^{1/p'} &\leq  \left( \int_X M((fw^{-1/p})^{(pr)'})^{p'/(pr)'} d\mu \right) ^{1/p'}\\
                                              &\leq  C\left(\frac{p'}{p'-(pr)'}\right)^{1/(pr)'} \left( \int_X f^{p'}w^{1-p'} d\mu \right) ^{1/p'}\\
                                              &=C\left(\frac{p'}{p'-(pr)'}\right)^{1/(pr)'} \left|\left|\frac{f}{w}\right|\right|_{L^{p'}(w)} \\
                                              &= C \left(\frac{rp-1}{r-1}\right)^{1-1/pr}\left|\left|\frac{f}{w}\right|\right|_{L^{p'}(w)} \\
                                              & \leq C p \left(\frac{r}{r-1}\right)^{1-1/pr} \left|\left|\frac{f}{w}\right|\right|_{L^{p'}(w)},
\end{split}\end{equation*}

proving \eqref{MainLemmaIneq} and consequently \eqref{LinearGrowthIneq}.

\end{proof}

\begin{proof}[Proof of Corollary \ref{LinearGrowthCor}]

The proofs of \eqref{LinearGrowthCor1} and \eqref{LinearGrowthCor2} are immediate. In the first case, the estimate is derived by applying the sharp reverse H\"older inequality for weights in the $A_\infty$ class proved in Lemma \ref{SRHI} to \eqref{LinearGrowthIneq} and using the fact that $r'\approx [w]_{A_{\infty}}$. The latter is a direct consequence of \eqref{LinearGrowthCor1} since $w\in A_1$.
\end{proof}

      \subsection{Proof of Theorem \ref{WeakEstimateThm}}\label{Subsect42}

First we establish a lemma which follows similar ideas of \cite[Ch. 4, Lemma 3.3]{GCRF}, that we will need for the proof of Theorem \ref{WeakEstimateThm}.

\begin{lemma}\label{GCRF_lemma}
  Let $T$ be a Calder\'on--Zygmund operator. If $w$ is a weight and $a\in L^1(w)$ supported in a cube $Q$ with $\int_Q a(y)d\mu(y)=0$. Then, if
  we set $\tilde{Q}=LQ$ for a large $L>\eta>0$, the following inequality holds

  \begin{equation}\label{GCRF_lemma}
    \int_{X\setminus \tilde{Q}} |T(a)(x)|w(x)d\mu(x) \leq C \int_X |a(x)| Mw(x) d\mu(x),
  \end{equation}
  with $C$ an absolute constant depending on the kernel $K$.
\end{lemma}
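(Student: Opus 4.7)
The plan is to mimic the classical argument: exploit the cancellation $\int_Q a\,d\mu = 0$ to replace the kernel by a smooth difference, apply the second smoothness condition \eqref{smoothness2}, and then decompose $X\setminus \tilde Q$ into dyadic annuli around the center of $Q$, ending with a geometric series bounded by $Mw$.

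First, let $x_c=x_c(Q)$ and write $r_Q$ for the sandwich radius of $Q$ (so $B(x_c,r_Q)\subset Q\subset B(x_c,Cr_Q)$). Using $\int_Q a\,d\mu=0$, for each $x\in X\setminus \tilde Q$ we have
$$T(a)(x)=\int_Q K(x,y)\,a(y)\,d\mu(y)=\int_Q \bigl(K(x,y)-K(x,x_c)\bigr)\,a(y)\,d\mu(y).$$
Now choose the dilation constant $L$ large enough (depending only on $\kappa$, $\eta$, and $D_\mu$) so that $\rho(y,x_c)\le \eta\,\rho(x_c,x)$ holds uniformly for all $y\in Q$ and $x\in X\setminus \tilde Q$; this is possible via the quasimetric inequality together with the sandwich property. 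Under that condition, \eqref{smoothness2} (applied with the roles $y\mapsto x$, $x\mapsto y$, $x_0\mapsto x_c$) gives
$$|K(x,y)-K(x,x_c)|\le \left(\frac{\rho(y,x_c)}{\rho(x_c,x)}\right)^{\eta}\frac{1}{\mu(B(x_c,\rho(x_c,x)))}.$$
Inserting this bound, taking absolute values, and using Fubini reduces the problem to estimating, for each fixed $y\in Q$, the integral of the right-hand side against $w(x)\,d\mu(x)$ over $X\setminus\tilde Q$.

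Next, I would perform an annular decomposition $X\setminus \tilde Q\subset \bigcup_{j\ge 0}A_j$ with $A_j=\{x:2^jLr_Q\le \rho(x_c,x)<2^{j+1}Lr_Q\}$. On $A_j$, the geometric factor satisfies $(\rho(y,x_c)/\rho(x_c,x))^{\eta}\lesssim 2^{-j\eta}$ uniformly in $y\in Q$, while doubling gives $\mu(B(x_c,\rho(x_c,x)))\gtrsim \mu(B(x_c,2^{j+1}Lr_Q))$ (absorbing a fixed doubling factor). Since $A_j\subset B(x_c,2^{j+1}Lr_Q)$ and $y\in Q\subset B(x_c,2^{j+1}Lr_Q)$, the ball $B(x_c,2^{j+1}Lr_Q)$ both contains $A_j$ and is an admissible ball in the maximal function at the point $y$, so
$$\frac{w(A_j)}{\mu(B(x_c,2^{j+1}Lr_Q))}\le \frac{w(B(x_c,2^{j+1}Lr_Q))}{\mu(B(x_c,2^{j+1}Lr_Q))}\le Mw(y).$$
Summing the geometric series $\sum_{j\ge 0}2^{-j\eta}<\infty$ bounds the inner integral by $C\,Mw(y)$, and integrating $|a(y)|Mw(y)$ over $Q$ (equivalently, over $X$, since $\supp a\subset Q$) produces the stated estimate \eqref{GCRF_lemma}.

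The main technical obstacle is not analytic but geometric: pinning down the dilation constant $L$ so that the smoothness hypothesis \eqref{smoothness2} genuinely applies for \emph{all} pairs $(x,y)\in (X\setminus \tilde Q)\times Q$, and threading the quasimetric constant $\kappa$ through the comparisons between the ball $B(x_c,\rho(x_c,x))$ appearing in \eqref{smoothness2} and the dyadic annulus $A_j$. Once the doubling constant $D_\mu$ is used to absorb these comparisons into absolute constants, the remainder of the argument parallels the Euclidean proof in \cite[Ch. 4, Lemma 3.3]{GCRF}.
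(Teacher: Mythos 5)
Your proposal is correct and follows essentially the same route as the paper's proof: use the cancellation of $a$ to replace $K(x,y)$ by the difference $K(x,y)-K(x,\cdot_{\text{center}})$, apply the smoothness condition \eqref{smoothness2} in the second variable, decompose $X\setminus\tilde Q$ into dyadic annuli, and sum the geometric series to bound by $Mw(y)$ before integrating $|a(y)|$ over $Q$. The only cosmetic difference is that the paper ``assumes for simplicity'' that $Q$ is a ball $B(y_0,R)$ and centers the kernel difference at $y_0$, whereas you center at the dyadic point $x_c(Q)$ and invoke the sandwich property — the same geometry, handled a bit more explicitly.
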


\begin{proof}
  Fix $y_0\in X$ and assume for simplicity that $Q=B(y_0,R)$, with $R>0$. Now making use of the cancellation property of $a$, we obtain

  \begin{equation*}\begin{split}
    \int_{X\setminus \tilde{Q}} |T(a)(x)| w(x) d\mu(x) &= \int_{X\setminus \tilde{Q}} \left| \int_Q K(x,y) a(y) d\mu(y) \right| w(x) d\mu(x) \\
    &\!\leq \!\int_Q \!\int_{X \setminus \tilde{Q}}\! |K\!(x,y)\!-\!K\!(x,y_0)| w(x) d\mu(x) |a(y)| d\mu(y) \\
    &\leq \int_Q I(y) |a(y)| d\mu(y).
  \end{split}\end{equation*}

  Then we only need to prove that $I$ is bounded by $C Mw(y)$ where $C=C_{X,K}$ is an absolute constant depending on the kernel $K$. For every $y\in Q$, using the smoothness property of $K$ in the second variable since $\rho(y,y_0)\leq\eta \rho(x,y_0)$, we obtain

  \begin{equation*}\begin{split}
    I(y) &= \int_{X\setminus\tilde{Q}} |K(x,y)-K(x,y_0)| w(x) d\mu(x) \\
         &= \int_{X\setminus\tilde{Q}} \left(\frac{\rho(y,y_0)}{\rho(x,y_0)}\right)^{\eta}\frac{1}{\mu(B(y_0,\rho(x,y_0)))} w(x) d\mu(x) \\
         &= \sum_{l=1}^{\infty} \int_{2^l Q \setminus 2^{l-1}Q} \left(\frac{\rho(y,y_0)}{\rho(x,y_0)}\right)^{\eta}\frac{1}{\mu(B(y_0,\rho(x,y_0)))} w(x) d\mu(x)
  \end{split}\end{equation*}

  \begin{equation*}\begin{split}
         &\leq \sum_{l=1}^{\infty} \int_{2^l Q} \frac{2^{\eta}}{2^{l\eta}} \frac{\mu(B(y_0,2^{l}\delta^k))}{\mu(B(y_0,2^{l-1}\delta^k))}\frac{1}{\mu(2^l Q)} w(x) d\mu(x) \\
         &\leq D_{X,K} \sum_{l=1}^{\infty} \frac{1}{2^{l\eta}} \frac{1}{\mu(2^l Q)}\int_{2^l Q} w(x) d\mu(x) \\
         &\leq D_{X,K} Mw(y).
  \end{split}\end{equation*}
   Above we have used the fact that $\rho(y,y_0)<R$ and since $\rho(x,y_0)>LR$, there exists $l>1$, so that $2^{l-1}R<\rho(x,y_0)<2^l R$  . Then we have shown that \eqref{GCRF_lemma} holds.

\end{proof}
  \begin{proof}[Proof of Theorem \ref{WeakEstimateThm}]

The proof of Theorem \ref{WeakEstimateThm} is based on several ingredients that we will mention as we need them. We follow the proof of Theorem 1.6 in \cite{Per2}.  We claim that the following inequality holds: for any $1<p,r<\infty$
  \begin{equation}\begin{split}\label{weak_CLAIM}
    ||Tf||_{L^{1,\infty}(w)} &= \lambda \sup_{\lambda>0} w(\{ y\in X : |Tf(y)|>\lambda \}) \\
                             &\leq C (p')^{p} (r')^{p-1} ||f||_{L^{1}(M_r w)},
  \end{split}\end{equation}
  and this claim implies \eqref{WeakEstimateIneq}. Indeed, it suffices to fix $r>1$ and choose $p = 1+\frac{1}{\log{r'}}$.  We then obtain

  \begin{equation*}\begin{split}
    ||Tf||_{L^{1,\infty}(w)} &\leq C (p')^{p-1} (1+\log{r'}) (r')^{p-1} ||f||_{L^{1}(w)} \\
                             &\leq C \log{(e+r')} ||f||_{L^{1}(M_r w)}.
  \end{split}\end{equation*}
  since $p'=1+\log{r'}$, $(p')^{p-1} = (1+\log(r'))^{1/{\log(r')}}\leq e$, $(r')^{p-1} = (r')^{1/{\log(r')}}=e$ and $1+\log{r'}= \log{(er')}\leq 2\log(e+r')$.

We now prove \eqref{weak_CLAIM}. By the classical Calder\'on--Zygmund decomposition of a function $f\in \mathcal{C}^{\infty}_0(X)$ at a level $\lambda$ (see, for example, \cite{ACM}), we obtain a family of non-overlapping dyadic cubes $\{Q_j\}$ satisfying

  \begin{equation*}
    \lambda < \frac{1}{|Q_j|} \int_{Q_j} |f(x)| d\mu(x) \leq C_X \lambda.
  \end{equation*}

  Let $\O = \cup_{j}\O_j$ and let us denote $\wt{Q}_j= 2 Q_j$ and $\wt{\O}= \cup_j \wt{Q}_j$. Using the notation $f_Q=\frac{1}{|Q|}\int_Q f(x)d\mu(x)$, we write $f=g+b$ where $g(x)=\sum_j f_{Q_j}\chi_{Q_j}(x) + f(x)\chi_{\O^c}$ and $b=\sum_j b_j$ with $b_j(x)=(f(x)-f_{Q_j})\chi_{Q_j}(x)$. We have

  \begin{equation*}\begin{split}
    w(\{ y\in X : |Tf(y)|>\lambda \}) &\leq  w(\wt{\O}) + w(\{ y\in(\wt{\O})^c : |Tg(y)|>\lambda/2 \}) \\
    & + w(\{ y\in(\wt{\O})^c : |Tb(y)|>\lambda/2 \}) \equiv I + II + III.
  \end{split}\end{equation*}

Now
  \begin{equation*}\begin{split}
    I = w(\wt{\O}) &\leq C \sum_j \frac{w(\wt{Q}_j)}{\mu(\wt{Q}_j)} \mu(Q_j) \leq \frac{C}{\lambda} \sum_j \frac{w(\wt{Q}_j)}{\mu(\wt{Q}_j)} \int_{Q_j}|f(x)| d\mu(x) \\
    &\leq \frac{C}{\lambda} \sum_j \int_{Q_j}|f(x)|Mw(x) d\mu(x) \leq \frac{C}{\lambda} \int_X |f(x)| Mw(x)d\mu(x) \\
    & \leq \frac{C}{\lambda} \int_X |f(x)| M_r w(x)d\mu(x).
  \end{split}\end{equation*}

  The second term is estimated as follows using Chebyshev's inequality and \eqref{LinearGrowthIneq}.  For each $p>1$ we get

  \begin{equation*}\begin{split}
    II &= w(\{ x\in(\wt{\O})^c: |T(g)(x)|>\lambda/2 \}) \\
       &\leq C (p')^p (r')^{p/p'} \frac{1}{\lambda^p} \int_X |g|^p M_r(w\chi_{(\wt{\O})^c}) d\mu(x) \\
       &\leq C (p')^p (r')^{p/p'} \frac{1}{\lambda} \int_X |g| M_r(w\chi_{(\wt{\O})^c}) d\mu(x) \\
       &\leq C (p')^p (r')^{p/p'} \frac{1}{\lambda} \left( \int_{X\setminus\O} |g|M_r(w\chi_{(\wt{\O})^c}) d\mu(x) + \int_{\O} |g|M_r(w\chi_{(\wt{\O})^c}) d\mu(x) \right) \\
       &= C (p')^p (r')^{p/p'} \frac{1}{\lambda} (II_1 + II_2).
  \end{split}\end{equation*}

  It is clear that

  \begin{equation*}
    II_1 = \int_{X\setminus\O} |g| M_r(w\chi_{(\wt{\O})^c}) d\mu(x) \leq \int_X |f| M_r w d\mu(x).
  \end{equation*}

  Next, we estimate $II_2$ as follows

  \begin{equation*}\begin{split}
    II_2 &= \int_{\O} |g| M_r(w\chi_{(\wt{\O})^c})(x) d\mu(x) \\
      &\leq \sum_j \int_{Q_j} |f_{Q_j}| M_r(w\chi_{(\wt{\O})^c})(x) d\mu(x) \\
      &\leq \sum_j \int_{Q_j} \frac{1}{\mu(Q_j)}\int_{Q_j} |f(y)|d\mu(y)  M_r(w\chi_{(\wt{\O})^c})(x) d\mu(x)\\
      &\leq C \sum_j \int_{Q_j} |f(y)|d\mu(y) \frac{1}{\mu(Q_j)} \inf_{Q_j} M_r(w\chi_{(\wt{\O})^c})\mu(Q_j) \\
      &\leq C \sum_j \int_{Q_j} |f(y)| M_r w(y) d\mu(y) \\
      &\leq C \int_X |f(y)| M_r w(y) d\mu(y),
  \end{split}\end{equation*}

  where we have used that for any $r>1$, non-negative function $w$ with $M_r w(x)<\infty$ a.e., cube $Q_j$ and $x\in Q_j$ we have

  \begin{equation}
  \label{maximalinf}
  M_r(u\chi_{\tilde{\Omega}^c})(x)\lesssim \inf_{y\in Q_j}M_r(u\chi_{\tilde{\Omega}^c})(y)
  \end{equation}
  where the implicit constant depends only on the doubling constant $D_{\mu}$.

 This can be proved as in \cite{ACM}.  In the classical case with the Hardy--Littlewood maximal operator $M$, see for instance \cite[p. 159]{GCRF}.

  \par Combining both estimates we obtain

  \begin{equation*}
    w(\{ x\in(\wt{\O})^c: |T(g)(x)|>\lambda/2 \}) \leq \frac{C}{\lambda} (p')^p (r')^{p-1} ||f||_{L^1(M_r w)}.
  \end{equation*}

  Next we estimate the term $III$, using the estimate in Lemma \ref{GCRF_lemma} and replacing $w$ by $w\chi_{X\setminus \wt{Q}_j}$ we have

  \begin{equation*}\begin{split}
    III &= w(\{y\in X\setminus(\wt{\O})^c : |T(b)(y)|>\frac{\lambda}{2}\}) \\
        &\leq \frac{C}{\lambda} \int_{X\setminus\wt{\O}} |b(y)| w(y)d\mu(y) \\
        &\leq \frac{C}{\lambda} \sum_j \int_{X\setminus\wt{Q}_j} |b_j(y)|w(y) d\mu(y) \\
        &\leq \frac{C}{\lambda} \sum_j \int_X |b_j(y)|M(w\chi_{X\setminus\wt{Q}_j})(y)d\mu(y) \\
        &\leq \frac{C}{\lambda} \sum_j \int_{Q_j} |b(y)| M(w\chi_{X \setminus\wt{Q}_j})(y)d\mu(y) \\
        &\leq \frac{C}{\lambda} \sum_j \left(\int_{Q_j}|f(y)|Mw(y)d\mu(y) + \int_{Q_j} |g(y)| M(w\chi_{X\setminus\wt{Q}_j})(y)d\mu(y)\right)\\
        &= \frac{C}{\lambda} ( III_1 + III_2).
  \end{split}\end{equation*}

  To conclude the proof we only need to estimate $III_2$. However

  \begin{equation*}\begin{split}
    III_2 &=    \sum_j \int_{Q_j} |f_{Q_j}| M(w\chi_{\tilde{\Omega^c}})(y)d\mu(y) \\
          &\leq \sum_j \int_{Q_j} \frac{1}{\mu(Q_j)}\int_{Q_j} |f(x)| d\mu(x)  M(w\chi_{\tilde{\Omega}^c})(y)d\mu(y) \\
          &\leq \sum_j \int_{Q_j} |f(x)| d\mu(x) \inf_{Q_j} M(w\chi_{\tilde{\Omega}^c}) \\
          &\leq \sum_j \int_{Q_j} |f(x)| M(w\chi_{\tilde{\Omega}^c})(x)d\mu(x) \\
          &\leq C \int_X |f(x)| Mw(x) d\mu(x),
  \end{split}\end{equation*}

\par Combining the three estimates we have proved \eqref{weak_CLAIM} and this concludes the proof of the theorem.

\end{proof}

      \subsection{Proofs of Theorem \ref{Duo_ExtrapolationThm} and Corollary \ref{DuoThm}}\label{Subsect43}

Now, we turn to the proof of Theorem \ref{Duo_ExtrapolationThm}.  This extrapolation proof is reminiscent of the style of proof for Theorem \ref{LinearGrowthThm}.  We first need two lemmas, originally in \cite{Duo}.  The first is presented here in detail for spaces of homogeneous type.

\begin{lemma}[Factorization]\label{FactorizationLemma} If $w\in A_p$ and $u\in A_1$, we have that

\begin{enumerate}
  \item For $1\leq p < p_0 < \infty$, then $w u^{p-p_0}$ is in $A_{p_0}$ and

    \begin{equation*}\label{factorization1}
      [w u^{p-p_{0}}]_{A_{p_0}} \leq [w]_{A_p} [u]_{A_1}^{p_0-p}.
    \end{equation*}

  \item For $1<p_0<p<\infty$, then $(w^{p_0-1p}u^{p-p_0})^{\frac{1}{p-1}}$ is in $A_{p_0}$ and

   \begin{equation*}\label{factorization2}
     [(w^{p_0-1}u^{p-p_0})^{\frac{1}{p-1}}]_{A_{p_0}} \leq [w]_{A_p}^{\frac{p_0-1}{p-1}} [u]_{A_1}^{\frac{p-p_0}{p-1}}.
   \end{equation*}
\end{enumerate}
\end{lemma}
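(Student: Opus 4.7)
The plan is to verify part (1) by a direct Hölder-type computation exploiting that a power of $u$ pulls out through $A_1$, and then to deduce part (2) from part (1) by passing to dual weights.

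For part (1), fix a dyadic cube $Q$ and set $v=wu^{p-p_0}$. Since $p-p_0<0$, I would bound the first factor in the $A_{p_0}$ quotient pointwise by
\[
\avgint_Q w\,u^{p-p_0}\,d\mu \;\le\; (\essinf_Q u)^{p-p_0}\,\avgint_Q w\,d\mu .
\]
For the second factor, write $v^{1-p_0'} = w^{-1/(p_0-1)}u^{(p_0-p)/(p_0-1)}$ and apply Hölder's inequality with conjugate exponents $a=(p_0-1)/(p-1)$ and $a'=(p_0-1)/(p_0-p)$, chosen so that the $w$-piece becomes $w^{-1/(p-1)}$ and the $u$-piece becomes exactly $u$. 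Raising the result to the power $p_0-1$ and multiplying by the bound for the first factor produces
\[
\Bigl(\avgint_Q v\Bigr)\Bigl(\avgint_Q v^{1-p_0'}\Bigr)^{p_0-1}\;\le\; [w]_{A_p}\left(\frac{\avgint_Q u}{\essinf_Q u}\right)^{p_0-p}\!\!\le [w]_{A_p}[u]_{A_1}^{p_0-p},
\]
which is the desired estimate; the case $p=1$ is even easier since one simply uses $\avgint_Q w\le[w]_{A_1}\essinf_Q w$ in place of Hölder on $w$. Taking the supremum over $Q$ concludes part (1).

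For part (2), I would use the standard duality identity for the $A_q$ classes: $w\in A_p$ if and only if $\sigma:=w^{1-p'}\in A_{p'}$, with $[\sigma]_{A_{p'}}=[w]_{A_p}^{1/(p-1)}$. Because $p_0<p$ gives $p'<p_0'$, part (1) applied to $\sigma\in A_{p'}$ and $u\in A_1$ with indices $p'<p_0'$ yields
\[
\sigma\,u^{p'-p_0'}\in A_{p_0'},\qquad \bigl[\sigma u^{p'-p_0'}\bigr]_{A_{p_0'}}\le [\sigma]_{A_{p'}}[u]_{A_1}^{p_0'-p'}.
\]
Dualizing once more, the weight $(\sigma u^{p'-p_0'})^{1-p_0}$ lies in $A_{p_0}$ with constant raised to the power $p_0-1$. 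A short computation of exponents, using $(1-p')(1-p_0)=(p_0-1)/(p-1)$ and $(p'-p_0')(1-p_0)=(p-p_0)/(p-1)$, identifies this weight as $(w^{p_0-1}u^{p-p_0})^{1/(p-1)}$ and gives the constant bound $[w]_{A_p}^{(p_0-1)/(p-1)}[u]_{A_1}^{(p-p_0)/(p-1)}$, matching the statement.

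The main obstacle is purely bookkeeping: keeping the conjugate indices, the powers of $p$, $p_0$, $p'$, $p_0'$, and the exponents on $w$ and $u$ consistent throughout. No structural feature of spaces of homogeneous type beyond the $A_p/A_1$ definitions with respect to dyadic cubes (from Section~\ref{Sect2}) is used, so the argument is essentially identical to the Euclidean factorization theorem and goes through verbatim once Hölder's inequality for the doubling measure $\mu$ is invoked.
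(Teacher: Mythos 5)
Your proposal is correct and follows essentially the same route as the paper: for part (1), the first factor of the $A_{p_0}$ quotient is controlled by pulling out $(\essinf_Q u)^{p-p_0}$ (the paper equivalently uses the pointwise bound $u^{p-p_0}\le(\avgint_Q u)^{p-p_0}[u]_{A_1}^{p_0-p}$), and the second factor is handled via H\"older with the same conjugate exponents $\frac{p_0-1}{p-1}$ and $\frac{p_0-1}{p_0-p}$; for part (2), both the paper and you invoke the duality $[w^{1-p'}]_{A_{p'}}=[w]_{A_p}^{p'-1}$ and apply part (1) to the dual weight $\sigma=w^{1-p'}$ with indices $p'<p_0'$, the only difference being that the paper starts from the target quantity and rewrites it via duality while you apply part (1) to $\sigma$ and dualize back, which is the same argument read in reverse.
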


\begin{proof}[Proof of Lemma \ref{FactorizationLemma}]
  To prove $(1)$ we observe since $p-p_0<0$, using the definition of the $A_1$ constant we obtain

  \begin{equation*}\begin{split}
    \left(\avgint_{Q} u\right)^{p_0-p} u^{p-p_0} &\leq \left(\avgint_{Q} u\right)^{p_0-p} \left( \esssup_Q u^{-1} \right)^{p_0-p}\\
    &\leq [u]_{A_1}^{p_0-p}.
  \end{split}\end{equation*}

  So that

  \begin{equation}\label{fact_11}
    u^{p-p_0} \leq \left(\avgint_{Q} u\right)^{p-p_0} [u]_{A_1}^{p_0-p}.
  \end{equation}

  On the other hand, using H\"older inequality with exponents $\frac{1-p'}{1-p_0'}=\frac{p_0-1}{p-1}$ and $\frac{1-p'}{p_0'-p'}=\frac{p_0-1}{p_0-p}$, we have

  \begin{equation}\label{fact_12}\begin{split}
    &\left(\avgint_Q w^{1-p_0'} u^{(p-p_0)(1-p_0')}\right)^{p_0-1} \leq \\ &\leq \left(\avgint_Q w^{1-p'}\right)^{\frac{(p_0-1)(1-p_0')}{1-p'}} \left(\avgint_Q u^{\frac{(p-p_0)(1-p_0')(1-p')}{(p_0'-p')}}\right)^{\frac{(p_0'-p')(p_0-1)}{1-p'}} \\
    &= \left(\avgint_Q w^{1-p'}\right)^{p-1} \left(\avgint u \right)^{p_0-p}
  \end{split}\end{equation}

  Thus combining \eqref{fact_11} and \eqref{fact_12} we obtain

  \begin{equation*}
    \left(\avgint_Q w u^{p-p_0} \right) \left( \avgint_Q w^{1-p_0'} u^{(p-p_0)(1-p_0')}\right)^{p_0-1} \leq [w]_{A_p} [u]_{A_1}^{p_0-p}.
  \end{equation*}

  Finally taking supremum over all cubes $Q$ on the left hand side we get the desired result.

  \par To show $(2)$ we use a dualization argument on the weights and apply $(1)$. We have that

  \begin{equation*}\begin{split}
    \left[w^{\frac{p_0-1}{p-1}} u^{\frac{p-p_0}{p-1}}\right]_{A_{p_0}} &= \left[\left(w^{\frac{p_0-1}{p-1}} u^{\frac{p-p_0}{p-1}}\right)^{1-p_0'}\right]_{A_{p_0'}}^{\frac{1}{p_0'-1}} \\
    &= \left[w^{\frac{-1}{p-1}} u^{p'-p'_0}\right]_{A_{p_0'}}^{\frac{1}{p_0'-1}},
  \end{split}\end{equation*}

  since $\frac{p-p_0}{p-1}(1-p_0')=p'-p_0'$. Next, using $(1)$ since $w^{1-p'}\in A_{p'}$, $u\in A_1$ and $p_0'>p'$, we get the desired result

  \begin{equation*}\begin{split}
    \left[w^{\frac{p_0-1}{p-1}} u^{\frac{p-p_0}{p-1}}\right]_{A_{p_0}} &\leq [w]_{A_p}^{\frac{p'-1}{p_0'-1}} [u]_{A_1}^{\frac{p_0'-p'}{p_0'-1}} \\
    &\leq [w]_{A_p}^{\frac{p_0-1}{p-1}} [u]_{A_1}^{\frac{p-p_0}{p-1}},
  \end{split}\end{equation*}

  where we have used in the next to last inequality that $[w]_{A_p}^{p'-1}=[w^{1-p'}]_{A_{p'}}$ and in the last inequality the fact that $\frac{p_0'-p'}{p_0'-1}=\frac{p-p_0}{p-1}$ and $\frac{p'-1}{p_0'-1}=\frac{p_0-1}{p-1}$.

\end{proof}

The second lemma is a Rubio de Francia algorithm for building $A_1$ weights. We omit the proof of this result since it follows exactly in the same manner as in \cite{Duo}.

\begin{lemma}[Rubio de Francia's algorithm]\label{Duo_RdF_algorithm}
  Let $p>1$. Let $f$ be a nonnegative function in $L^p(w)$ and $w\in A_p$. Let $M^k$ be the $k^{th}$ iterate of $M$, $M^0=f$, and $||M||_{L^p(w)}$ be the norm of  $M$ as a bounded operator on $L^p(w)$. Define

  \begin{equation*}
    Rf(x)=\sum_{k=0}^{\infty} \frac{M^k f(x)}{2||M||_{L^p(w)}^k}.
  \end{equation*}
  Then $f(x)\leq Rf(x)$ a.e., $||Rf||_{L^p(w)} \leq 2 ||f||_{L^p(w)}$, and $Rf$ is an $A_1$ weight with constant $[Rf]_{A_1}\leq 2 ||M||_{L^p(w)}$.

\end{lemma}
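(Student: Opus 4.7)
The plan is to verify each of the three claimed properties directly, exploiting the geometric series structure of $Rf$. All three reduce to elementary manipulations once one observes that $Rf$ is designed precisely so that the "shift by one iterate of $M$" in $M(Rf)$ produces the same series multiplied by the ratio $2\|M\|_{L^p(w)}$.

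First, the inequality $f(x)\leq Rf(x)$ a.e.\ is immediate: by definition $M^0f = f$, so $f$ is the $k=0$ term of a sum of nonnegative functions. Second, to bound $\|Rf\|_{L^p(w)}$ I would apply Minkowski's inequality to the series defining $Rf$, obtaining
\[
\|Rf\|_{L^p(w)} \leq \sum_{k=0}^{\infty} \frac{\|M^k f\|_{L^p(w)}}{(2\|M\|_{L^p(w)})^{k}} \leq \sum_{k=0}^{\infty} \frac{\|M\|_{L^p(w)}^{k}\|f\|_{L^p(w)}}{(2\|M\|_{L^p(w)})^{k}} = 2\,\|f\|_{L^p(w)},
\]
where the middle step iterates the assumed boundedness $\|Mg\|_{L^p(w)}\leq \|M\|_{L^p(w)}\|g\|_{L^p(w)}$ and the last step sums the geometric series $\sum_k 2^{-k}$. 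In particular, $Rf$ is finite a.e.

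Third, for the $A_1$ estimate I would apply $M$ to $Rf$ and use subadditivity (or more precisely countable subadditivity, justified by monotone convergence since each $M^k f\geq 0$) to pass $M$ inside the sum:
\[
M(Rf)(x) \leq \sum_{k=0}^{\infty}\frac{M^{k+1}f(x)}{(2\|M\|_{L^p(w)})^{k}} = 2\|M\|_{L^p(w)}\sum_{j=1}^{\infty}\frac{M^{j}f(x)}{(2\|M\|_{L^p(w)})^{j}} \leq 2\|M\|_{L^p(w)}\,Rf(x),
\]
where in the middle equality I re-index $j=k+1$ and the final inequality uses that the tail sum is bounded by the full sum $Rf(x)$. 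This pointwise inequality is exactly the statement $[Rf]_{A_1}\leq 2\|M\|_{L^p(w)}$.

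There is no serious obstacle here; the only delicate point is the countable subadditivity of $M$ on the series, which follows from writing $M$ as a supremum of averages and exchanging the supremum with the sum of nonnegative terms (or directly from monotone convergence applied to the partial sums). The fact that the argument is agnostic to the underlying space—requiring only that $M$ is a sublinear positive operator bounded on $L^p(w)$—is what allows the proof from \cite{Duo} to transfer verbatim to the homogeneous-type setting.
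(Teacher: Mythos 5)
Your argument is correct and is exactly the standard Rubio de Francia computation; the paper in fact omits the proof entirely, deferring to Duoandikoetxea~\cite{Duo}, and your three steps (nonnegativity of the $k=0$ term, Minkowski plus the geometric series for the $L^p(w)$ bound, and countable sublinearity of $M$ with a shift of index for the $A_1$ bound) are precisely that standard argument. Your closing observation that the proof uses only sublinearity, positivity, and $L^p(w)$-boundedness of $M$ is also the correct reason it transfers to spaces of homogeneous type without modification.
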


\begin{proof}[Proof of Theorem \ref{Duo_ExtrapolationThm}]
  The proof of this result follows from Duoandikoetxea's proof except for the fact that we have to replace the sharp bound for the Hardy--Littlewood maximal function by the new Buckley's theorem in Hyt\"onen and Kairema \cite{HK}, so the constant $C_2$ in the proof now depends on $p$ and $X$.
\end{proof}

\begin{proof}[Proof of Theorem \ref{DuoThm}]
  It follows directly from the proof in \cite{Duo}.
\end{proof}

      \subsection{Proof of Theorem \ref{mixedsharp} and Corollary \ref{commutator_thm}}\label{Subsect44}

To prove the main result in this section we first need the following lemmas.  The first is a sharp reverse H\"older inequality for $A_{\infty}$ weights in spaces of homogeneous type adapted from an argument due to Hyt\"onen, P\'erez, and Rela whose proof can be found in \cite{HPR}.  Before we state and prove this, we note that in this same paper there is a weak version of this inequality stated below.  They call this result a weak inequality since on the right hand side we have the dilation $2\kappa B$ of the ball $B$.

\begin{lemma}[\cite{HPR}]
Let $w\in A_{\infty}$ and define \[r = r_w = 1+\frac{1}{\tau[w]_{A_{\infty}}} = 1+\frac{1}{6(32\kappa^2(4\kappa^2+\kappa)^2)^{D_{\mu}}[w]_{A_{\infty}}}\] where $\tau$ depends on $\kappa$, the quasimetric constant of $X$.  Then \[\left( \avgint_Bw^rd\mu\right) ^{1/r}\leq \left( 2(4\kappa)^{D_{\mu}}\avgint_{2\kappa B}wd\mu\right),\] for any ball $B\in X$.
\end{lemma}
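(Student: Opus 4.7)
The plan is to adapt the Hytönen--Pérez--Rela argument from $\mathbb{R}^n$ to the SHT setting, combining the layer cake formula with a Calderón--Zygmund stopping time on the dyadic grid inside the dilated ball $2\kappa B$, and absorbing the tail via the smallness of the exponent $r-1\sim 1/[w]_{A_\infty}$.

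First I would reduce to the case where $w$ is bounded, by replacing $w$ with its truncation $w_N=\min\{w,N\}$, noting $[w_N]_{A_\infty}\le c\,[w]_{A_\infty}$, proving the inequality for $w_N$, and then letting $N\to\infty$ by monotone convergence. Next I would use the layer cake identity
\[
\int_B w^r\, d\mu \;=\; (r-1)\int_0^\infty t^{r-2}\, w(\{x\in B:w(x)>t\})\,dt,
\]
and split the outer integral at the threshold $\lambda_0=(4\kappa)^{D_\mu}\avgint_{2\kappa B}w\,d\mu$. The low-level piece $t\le\lambda_0$ is trivially bounded using $w(\{\cdots\})\le w(B)$ and produces a contribution of order $\lambda_0^{r-1}w(B)$, which after dividing by $\mu(B)$ and taking $r$-th roots is already within the target bound.

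For the high-level tail $t>\lambda_0$, I would run a stopping-time argument on the Hytönen--Kairema dyadic grid from Theorem \ref{dyadic}: at each level $t$, extract the maximal dyadic cubes $Q_j^t\subset 2\kappa B$ with $\avgint_{Q_j^t}w>t$. Maximality combined with the sandwich property and doubling forces $\avgint_{Q_j^t}w\le C_D t$, where $C_D$ depends on $D_\mu$. By the SHT Lebesgue differentiation theorem from \cite{ACM}, almost every $x\in B$ with $w(x)>t$ lies in some $Q_j^t$, so
\[
w(\{x\in B:w(x)>t\}) \;\le\; \sum_j w(Q_j^t) \;\le\; C_D\, t\,\mu(\Omega_t),
\]
where $\Omega_t=\bigsqcup_j Q_j^t\subset\{M^d(w\chi_{2\kappa B})>t\}$. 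The Fujii--Wilson definition then gives $t\,\mu(\Omega_t)\le\int_{2\kappa B}M(w\chi_{2\kappa B})\,d\mu\le [w]_{A_\infty}\,w(2\kappa B)$. A self-improvement argument, iterating the CZ stopping inside each $Q_j^t$, upgrades this to a geometric decay in $t$ for the $w$-measure of $\Omega_t$. Choosing $r-1=1/(\tau[w]_{A_\infty})$ with $\tau=6(32\kappa^2(4\kappa^2+\kappa)^2)^{D_\mu}$ makes the resulting geometric series converge and dominated by the low-level piece up to a factor of $2$, yielding the factor $2(4\kappa)^{D_\mu}$ on the right-hand side.

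The main obstacle is the self-improvement step producing geometric decay of $w(\Omega_\lambda)$: one must iterate the CZ stopping inside each stopping cube and carefully apply the $A_\infty$ condition at every scale while tracking the quasimetric constant $\kappa$ and the doubling constant $D_\mu$ through every dilation. In particular, the appearance of $2\kappa B$ on the right-hand side (rather than $B$) is forced by the need to expand the ball to sandwich $B$ inside dyadic cubes via Theorem \ref{dyadic}(6), and each such passage costs a factor of $\kappa$ or $(4\kappa)^{D_\mu}$. The multiplicative combination of these costs, together with the number of iterations needed to convert the Fujii--Wilson integral bound into genuine exponential decay, is precisely what produces the explicit value of $\tau$ in the statement.
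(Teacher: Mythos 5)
The paper does not prove this lemma: it is quoted verbatim from \cite{HPR} as a ``weak'' (ball-dilation) form, and the paper goes on to prove a different, cube-based sharp reverse H\"older inequality (Lemma \ref{SRHI}, via Lemma \ref{SRHsublemma}). So the meaningful comparison is with that argument, which is the same mechanism as \cite{HPR}.

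Your truncation reduction, the layer-cake identity $\int_B w^r\,d\mu=(r-1)\int_0^\infty t^{r-2}\,w(\{x\in B:w>t\})\,dt$, the split at $\lambda_0=(4\kappa)^{D_\mu}\avgint_{2\kappa B}w$, and the Calder\'on--Zygmund stopping producing cubes with $\avgint_{Q_j^t}w\le C_D t$ and $\Omega_t\subset\{M^d(w\chi_{2\kappa B})>t\}$ are all the right ingredients, and the low-level piece is handled correctly.

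The gap is the high-level tail. From the stopping cubes you only extract $w(\{x\in B:w>t\})\le C_D\,t\,\mu(\Omega_t)$ and then the uniform-in-$t$ bound $t\mu(\Omega_t)\le[w]_{A_\infty}w(2\kappa B)$. That last bound is useless here: substituting it leaves $\int_{\lambda_0}^\infty t^{r-2}\,dt$, which diverges for every $r>1$. You acknowledge this and invoke ``a self-improvement argument, iterating the CZ stopping inside each $Q_j^t$, upgrades this to a geometric decay in $t$ for the $w$-measure of $\Omega_t$.'' That single sentence carries the entire theorem, and it is not the mechanism that yields the sharp exponent $r-1\sim1/[w]_{A_\infty}$. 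A good-$\lambda$-type iteration of the stopping time gives at best polynomial decay with a rate that must be tracked against the Fujii--Wilson constant scale by scale; nothing in your outline shows this produces exactly the stated $\tau=6\bigl(32\kappa^2(4\kappa^2+\kappa)^2\bigr)^{D_\mu}$, and the classical Coifman--Fefferman $A_\infty$-decay argument does not give Fujii--Wilson sharpness.

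What \cite{HPR} and the paper's Lemma \ref{SRHsublemma} actually do is an \emph{absorption}, not a decay argument. One writes $w^{1+r}\le w(Mw)^r$ and bounds $\int(Mw)^{1+r}=r\int_0^\infty\lambda^{r-1}\bigl(\int_{\Omega_\lambda}Mw\,d\mu\bigr)\,d\lambda$, localizes $Mw=M(w\chi_{Q_j})$ on each stopping cube by maximality, applies the Fujii--Wilson condition scale by scale to get $\int_{Q_j}Mw\,d\mu\le[w]_{A_\infty}w(Q_j)\le[w]_{A_\infty}D\lambda\mu(Q_j)$, and then recognizes the resulting expression $r[w]_{A_\infty}D\int\lambda^r\mu(\Omega_\lambda)\,d\lambda$ as $\tfrac{rD[w]_{A_\infty}}{1+r}\int(Mw)^{1+r}$, which can be moved to the left-hand side the moment $r$ is small enough that $\tfrac{rD[w]_{A_\infty}}{1+r}\le\tfrac12$. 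This is where $r\lesssim1/[w]_{A_\infty}$ comes from, with no iteration and no decay estimate. To repair your outline, replace the ``geometric decay'' claim with exactly this comparison-and-absorption on $\int(Mw)^{1+r}$ and then descend to $\int w^{1+r}\le\int(Mw)^rw$, as in the paper's Lemmas \ref{SRHsublemma} and \ref{SRHI}.
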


However, this lemma is not sufficient for our purposes.  The difficulty lies in the fact that the Fujii-Wilson $A_{\infty}$ constant is comparable when it is defined with respect to cubes or balls, but the constant of comparison depends on the weight $w$.  This provides a difficulty in converting between the constants, and since cubes are essential in the following lemmas, we need a sharp reverse H\"older inequality with cubes.  Here is the lemma that we use with respect to cubes.

\begin{lemma}\label{SRHI}
Let $w\in A_{\infty}$ and let
\[0<r \leq\frac{1}{\tau[w]_{A_{\infty}}-1} =\frac{1}{2D[w]_{A_{\infty}}-1},\]
with $D = 1/\ve$, where $\ve$ is the absolute constant appearing in the dyadic decomposition of $X$.  Then
\[\avgint_Q w^{1+r} d\mu\leq  2 \left( \avgint_{Q}wd\mu\right)^{1+r},\]
for any cube $Q\subset X$.
\end{lemma}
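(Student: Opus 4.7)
The plan is to adapt the proof of Hyt\"onen--P\'erez--Rela \cite{HPR} from balls to dyadic cubes. Working with cubes in place of balls is what lets the dyadic nesting keep all of the level sets inside the initial cube $Q$, which in turn eliminates the dilation factor $2\kappa$ appearing in the ball version of the inequality. First I would fix a cube $Q$ and (after rescaling) assume $w_Q := \avgint_Q w\,d\mu = 1$. Set the stopping parameter $a := 2D$, where $D = 1/\epsilon$ comes from Theorem~\ref{dyadic}. For each integer $k \geq 1$, let $\{Q_k^j\}_j$ be the collection of maximal dyadic subcubes of $Q$ with $\avgint_{Q_k^j} w > a^k$, and set $\Omega_k := \bigcup_j Q_k^j$ and $\Omega_0 := Q$. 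Three standard properties are needed: (i) the parent--child average bound $a^k < \avgint_{Q_k^j} w \leq D a^k$, from maximality and the volume ratio $\mu(\hat Q_k^j)/\mu(Q_k^j) \leq D$; (ii) the pointwise level estimate $w \leq a^{k+1}$ almost everywhere on $\Omega_k \setminus \Omega_{k+1}$, by the Lebesgue Differentiation Theorem and the maximality of $\{Q_{k+1}^l\}$; and (iii) the $\mu$-sparsity $\mu(\Omega_{k+1} \cap Q_k^j) \leq \tfrac{1}{2}\mu(Q_k^j)$, since $\mu(\Omega_{k+1} \cap Q_k^j) \leq w(Q_k^j)/a^{k+1} \leq (D/a)\mu(Q_k^j) = \tfrac{1}{2}\mu(Q_k^j)$.

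Decomposing $Q$ into the disjoint layers $\Omega_k \setminus \Omega_{k+1}$ and applying (ii) pointwise gives
\[
\int_Q w^{1+r}\,d\mu \;\leq\; a^r \sum_{k \geq 0} a^{kr}\bigl(w(\Omega_k) - w(\Omega_{k+1})\bigr),
\]
and an Abel summation rewrites the right-hand side as
\[
a^r w(Q) \;+\; a^r (a^r - 1) \sum_{k \geq 1} a^{(k-1)r}\,w(\Omega_k).
\]
At this point the combinatorics of the stopping decomposition is finished, and the remaining task is to control the tail sum using only the Fujii--Wilson $A_\infty$ condition on $w$.

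The key, and most delicate, step is to establish the geometric decay
\[
w(\Omega_{k+1}) \;\leq\; \Bigl(1 - \tfrac{1}{\tau [w]_{A_\infty}}\Bigr)\,w(\Omega_k)
\]
for an absolute $\tau$ whose value has to be matched to $a = 2D$ in order to yield the precise threshold stated. I would prove this by applying the $A_\infty$ condition locally to each $Q_k^j$, namely $\int_{Q_k^j} M(w\chi_{Q_k^j})\,d\mu \leq [w]_{A_\infty} w(Q_k^j)$, combined with a lower bound $M(w\chi_{Q_k^j}) \geq c\,a^{k+1}$ on $\Omega_{k+1} \cap Q_k^j$ that comes from the sandwich property (Theorem~\ref{dyadic}(6)): it lets one replace an average over a dyadic subcube $Q_{k+1}^l$ by the average over a comparably sized ball centered at $x_c(Q_{k+1}^l)$, without losing any factor depending on $w$. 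Iterating the decay gives $w(\Omega_k) \leq (1 - 1/(\tau [w]_{A_\infty}))^k w(Q)$. Substituting into the Abel sum produces a geometric series in the ratio $a^r\bigl(1 - 1/(\tau [w]_{A_\infty})\bigr)$, and the hypothesis $r \leq 1/(2D[w]_{A_\infty} - 1)$ is exactly what forces this ratio to be strictly less than $1$ by the margin required for the series to sum to a quantity bounded by $2 w(Q) = 2 w_Q^{1+r}\mu(Q)$, proving the reverse H\"older inequality. The main obstacle is this final $A_\infty$ step: one has to pair the local application of $A_\infty$ to each $Q_k^j$ with the sandwich property in a sufficiently sharp way so that the constant coming out of the decay matches $a = 2D$ and produces precisely the threshold $1/(2D[w]_{A_\infty} - 1)$ rather than a larger universal constant.
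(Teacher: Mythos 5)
Your proposal has a genuine gap at what you yourself flag as ``the key, and most delicate, step'': the claimed geometric decay $w(\Omega_{k+1}) \leq \bigl(1 - \tfrac{1}{\tau [w]_{A_\infty}}\bigr) w(\Omega_k)$. The argument you sketch for it does not produce a decay in the $w$--measure. Applying the Fujii--Wilson condition on $Q_k^j$ together with the lower bound $M(w\chi_{Q_k^j}) \geq c\,a^{k+1}$ on $\Omega_{k+1}\cap Q_k^j$ gives only
\[
\mu(\Omega_{k+1}\cap Q_k^j) \;\leq\; \frac{[w]_{A_\infty}}{c\,a^{k+1}}\, w(Q_k^j),
\]
and converting this to a $w$--measure bound via the stopping inequality $\avgint_{Q_{k+1}^l} w \leq D a^{k+1}$ yields
\[
w(\Omega_{k+1}\cap Q_k^j) \;\leq\; D a^{k+1}\,\mu(\Omega_{k+1}\cap Q_k^j) \;\leq\; \frac{D[w]_{A_\infty}}{c}\, w(Q_k^j),
\]
which is an expansion, not a contraction, since $D[w]_{A_\infty}/c \geq 1$. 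Worse, the decay claim is simply false with a universal $\tau$: on $Q_0=[0,1]$ in $\mathbb R$ take $w = 2^N\chi_{[0,2^{-N}]}$. Here $\ve = 1/2$, $D=2$, $a=4$, one computes $[w]_{A_\infty}\approx N/2$, and the stopping cubes at level $a^k$ are $[0,2^{-(2k+1)}]$ for $2k+1\leq N$, so $w(\Omega_k) = w(Q_0)$ for every $k \lesssim N/2$. A decay rate of $1-c/N$ per step would force $w(\Omega_1)<w(Q_0)$, a contradiction. (Indeed, for this weight the sharp reverse H\"older exponent $r\sim 1/N$ is achieved precisely because $w$ does \emph{not} thin out across $\sim N$ levels.) With the decay unavailable, the Abel sum $\sum_{k\geq 1} a^{(k-1)r} w(\Omega_k)$ in your write-up has no usable bound: the sparsity $\mu(\Omega_{k+1})\leq\tfrac12\mu(\Omega_k)$ controls $\mu$, but $w(\Omega_k)\approx a^k\mu(\Omega_k)$, and $a/2 = D \geq 1$ ruins the geometric series.

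The paper sidesteps this entirely and follows Hyt\"onen--P\'erez--Rela more faithfully than your sketch does. It first proves the auxiliary estimate (Lemma \ref{SRHsublemma})
\[
\avgint_{Q}(Mw)^{1+r}\,d\mu \;\leq\; 2[w]_{A_\infty}\Bigl(\avgint_{Q} w\,d\mu\Bigr)^{1+r}
\]
by writing $\int_{Q_0}(Mw)^{1+r}$ distributionally, splitting the $\lambda$--integral at $w_{Q_0}$, localizing $Mw=M(w\chi_{Q_j})$ on each Calder\'on--Zygmund cube $Q_j$, and then invoking $A_\infty$ plus the parent--cube comparison to obtain $\sum_j\int_{Q_j}Mw\,d\mu\leq D[w]_{A_\infty}\lambda\,\mu(\Omega_\lambda)$. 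Feeding this back into the distributional formula produces a self-referential inequality, and the hypothesis $r\leq \tfrac{1}{2D[w]_{A_\infty}-1}$ is exactly what makes the coefficient $\tfrac{rD[w]_{A_\infty}}{1+r}\leq\tfrac12$, allowing \emph{absorption} of the tail into the left-hand side. The main lemma then follows from $\int_{Q_0}w^{1+r}\leq\int_{Q_0}(Mw)^r w$, the same stopping decomposition, and the sublemma. The essential mechanism is absorption of $\int(Mw)^{1+r}$, which requires no decay of $w(\Omega_k)$ whatsoever; this is the step your plan is missing and, as far as I can see, cannot do without.
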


The proof will use the following sublemma.

\begin{lemma}\label{SRHsublemma}
Let $w\in A_{\infty}$ and $Q_0$ a cube.  Then for all
\[0<r \leq\frac{1}{2D[w]_{A_{\infty}}-1}\]
we have
\[\avgint_Q(Mw)^{1+r}d\mu\leq  2[w]_{\infty}\left( \avgint_{Q}wd\mu\right)^{1+r}.\]
\end{lemma}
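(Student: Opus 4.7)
The plan is to apply a self-improving layer-cake argument rooted in the Fujii--Wilson definition $\int_Q M(w\chi_Q)\,d\mu\leq[w]_{A_\infty}w(Q)$. It is cleanest to work with the localized dyadic maximal operator $M_Q^d w(x):=\sup\{\avgint_{Q'}w\,d\mu:x\in Q'\subseteq Q,\ Q'\in D\}$; the inequality for the original $Mw$ reduces to this one since on $Q$ the Hardy--Littlewood maximal function is dominated pointwise, up to a bounded multiple of $w_Q$, by $M_Q^d w$, and this additive $w_Q$-term is absorbed into the right-hand side.

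The heart of the matter is to establish the self-improving inequality
\begin{equation*}
\int_{\Omega_\lambda}M_Q^d w\,d\mu\leq K\lambda\,\mu(\Omega_\lambda),\qquad \lambda\geq w_Q:=\avgint_Q w\,d\mu,
\end{equation*}
with $K:=D[w]_{A_\infty}$ and $\Omega_\lambda:=\{M_Q^d w>\lambda\}\cap Q$. I would prove it by running the dyadic Calder\'on--Zygmund stopping time on $w$ inside $Q$ at level $\lambda$: as $w_Q\leq\lambda$, this produces disjoint maximal dyadic subcubes $\{F\}$ of $Q$ with $\lambda<\avgint_F w\,d\mu\leq D\lambda$ (the upper bound uses $\mu(\hat F)\leq D\mu(F)$, with $D=1/\epsilon$, from the sandwich property of Theorem \ref{dyadic}), and $\Omega_\lambda=\bigsqcup_F F$. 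Maximality forces every strict dyadic ancestor of $F$ inside $Q$ to have $w$-average $\leq\lambda$, so only dyadic subcubes of $F$ contribute to $M_Q^d w(x)$ for $x\in F$; hence $M_Q^d w\leq M(w\chi_F)$ pointwise on $F$. Applying Fujii--Wilson cube by cube and using $w(F)\leq D\lambda\mu(F)$ then yields the claim after summation over $F$.

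With that estimate in hand, Fubini gives
\begin{equation*}
\int_Q (M_Q^d w)^{1+r}d\mu=r\int_0^\infty\lambda^{r-1}\int_{\Omega_\lambda}M_Q^d w\,d\mu\,d\lambda,
\end{equation*}
which I split at $\lambda=w_Q$. For $0<\lambda<w_Q$ the inner integral is bounded crudely by $\int_Q M_Q^d w\,d\mu\leq[w]_{A_\infty}w(Q)$, contributing exactly $[w]_{A_\infty}w_Q^{1+r}\mu(Q)$ after integration in $\lambda$. For $\lambda\geq w_Q$ one applies the self-improving inequality together with the identity $(1+r)\int_0^\infty\lambda^r\mu(\Omega_\lambda)d\lambda=\int_Q(M_Q^d w)^{1+r}d\mu$ to produce the term $\frac{rK}{1+r}\int_Q(M_Q^d w)^{1+r}d\mu$. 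After a standard truncation of $w$ to ensure finiteness, one absorbs:
\begin{equation*}
\Bigl(1-\tfrac{rK}{1+r}\Bigr)\int_Q(M_Q^d w)^{1+r}d\mu\leq[w]_{A_\infty}w_Q^{1+r}\mu(Q),
\end{equation*}
and the hypothesis $r\leq 1/(2K-1)=1/(2D[w]_{A_\infty}-1)$ gives $1-\tfrac{rK}{1+r}\geq\tfrac12$ by direct calculation (equality at the endpoint), so dividing yields the claim.

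The main obstacle is producing the self-improving estimate with the sharp constant $K=D[w]_{A_\infty}$: this requires combining the Calder\'on--Zygmund stopping time (with the correct doubling factor $D=1/\epsilon$ from Theorem \ref{dyadic}) with a clean, localized application of Fujii--Wilson on each stopping cube. Everything after that---the Fubini splitting, the truncation, and the absorption---is routine bookkeeping.
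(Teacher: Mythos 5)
Your proof follows essentially the same route as the paper's: layer-cake formula for $\int_Q(Mw)^{1+r}$, split at $\lambda=w_Q$, Calder\'on--Zygmund stopping cubes at level $\lambda$ with the doubling factor $D=1/\epsilon$ from the parent, local Fujii--Wilson on each stopping cube to get $\int_{\Omega_\lambda}Mw\,d\mu\leq D[w]_{A_\infty}\lambda\,\mu(\Omega_\lambda)$, and absorption under the stated restriction on $r$. The only distinction is that you make explicit what the paper leaves implicit, namely that $M$ must be understood dyadically for the localization $Mw=M(w\chi_{Q_j})$ on $Q_j$ to hold exactly, and you phrase the stopping-time estimate as a standalone self-improving inequality; neither changes the substance of the argument.
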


\begin{proof}[Proof of Lemma \ref{SRHsublemma}]
Assume without loss of generality that $w = w\chi_{Q_0}$.  Let $\Omega_{\lambda} = Q_o\cap \{Mw>\lambda\}$.  Then
\[\int_{Q_0}(Mw)^{1+r} d\mu = \int_0^{\infty}r\lambda^{r-1}Mw(\Omega_{\lambda})d\mu d\lambda\]\[=\int_0^{w_{Q_0}}r\lambda^{r-1}\int_{Q_0}Mwd\lambda + \int_{w_{Q_0}}^{\infty}r\lambda^{r-1}Mw(\Omega_{\lambda})d\lambda.\]
Now select a dyadic cube $Q_j$ if it is maximal with respect to the following condition: $\lambda < w_{Q_j}$.  Then $\Omega_{\lambda} = \cup_jQ_j$ where  $\lambda <\avgint_{Q_j}w \leq \frac{1}{\ve}\lambda$ and $\ve$ is the absolute constant from Theorem \ref{dyadic}.  Hence we have
\[\int_{Q_0}(Mw)^{1+r}\leq w_{Q_0}^r[w]_{\infty}w(Q_0)+ \int_{w_{Q_0}}^{\infty}r\lambda^{r-1}\sum_j\int_{Q_j}Mwd\mu d\lambda.\]  Now we can localize
\[Mw(x) = M(w\chi_{Q_j})(x)\]
by the maximality of the $Q_j's$ for any $x\in Q_j$.  Then,

\begin{equation*}\begin{split}
  \int_{Q_j}Mw d\mu &= \int_{Q_j}M(w\chi_{Q_j}) \leq [w]_{\infty}w(Q_j)\leq [w]_{\infty}w(\hat{Q_j}) \\
  &=[w]_{\infty}w_{\hat{Q_j}}\mu(\hat{Q_j}) \leq [w]_{\infty}\lambda\frac{1}{\ve}\mu(Q_j),
\end{split}\end{equation*}

where $\hat{Q_j}$ is the parent of the cube $Q_j$ and we have used the definition of $A_{\infty}$ and the maximality and containment properties of the cubes.  Call $\frac{1}{\ve} = D$.  Hence

\[\sum_j\int_{Q_j}Mw d\mu \leq \sum_j[w]_{\infty}\lambda D \mu(Q_j) \leq [w]_{\infty}\lambda D\mu(\Omega_{\lambda})\]

so

\[\avgint_{Q_0}(Mw)^{1+r} \leq w_{Q_0}^{r}[w]_{\infty}w(Q_0) + r[w]_{\infty}D\int_{w_{Q_0}}^{\infty}\lambda^r\mu(\Omega_{\lambda})d\lambda.\]

Dividing by $w(Q_0)$, we obtain

\[\avgint_{Q_0}(Mw)^{1+r}\leq w_{Q_0}^{1+r}[w]_{\infty}+\frac{rD[w]_{\infty}}{1+r}\avgint_{{Q_0}} (Mw)^{1+r},\]

so by subtracting the last term on the left hand side from both sides of the equation, so to get the desired constant of 2 we must have that \[1-\frac{rD[w]_{\infty}}{1+r}\geq \frac{1}{2},\] which after some calculation results in choosing $0<r\leq \frac{1}{2D[w]_{\infty}-1}$ as stated.
\end{proof}

\begin{proof}[Proof of Lemma \ref{SRHI}]
Without loss of generality let $w = w\chi_{Q_0}$.  Then

\begin{equation*}
  \int_{Q_0}w^{1+r} \leq \int_{Q_0}(Mw)^rw = \int_0^{\infty}r\lambda^{r-1}w(\Omega_{\lambda})d \lambda,
\end{equation*}
where $\Omega_{\lambda} = Q_0\cap \{Mw>\lambda\}$.  Note that as in the previous lemma we can decompose $\Omega_{\lambda} = \cup_j Q_j$ where the $Q_j$ are the Calder\'on-Zygumnd cubes.  Then splitting up the integral we get

\begin{equation*}\begin{split}
  &\int_0^{w_{Q_0}}r\lambda ^{r-1}d\lambda + \int_{w_{Q_0}}^{\infty}r\lambda ^{r-1}w(\Omega_{\lambda})d\lambda \\
  &\leq w_{Q_{0}}^rw(Q_0)+ \int_{w_{Q_0}}^{\infty} r\lambda^{r-1}\sum_jw(Q_j)d\lambda.
\end{split}\end{equation*}

Now by the decomposition, we have that $w_{Q_j}\leq C_X\lambda \mu(Q_j)$, where $C_X = \frac{1}{\ve}$ since the decomposition is with respect to dyadic cubes, so we get

\[\int_{Q_0}(Mw)^rwd\mu \leq w_{Q_0}^rw(Q_0)+rC_X\int_{w_{Q_0}}^{\infty}r\lambda^r\sum_j\mu(Q_j)d\lambda\]\[\leq w_{Q_0}^rw(Q_0)+rC_X\int_{w_{Q_0}}^{\infty}\lambda^r\mu(\Omega_{\lambda})d\lambda \leq w_{Q_0}^rw(Q_0)+\frac{rC_X}{1+r}\int_{Q_0}(Mw)^{1+r}.\]

Hence, dividing by $w(Q_0)$ and using Lemma \ref{SRHsublemma}, we arrive at

\begin{equation*}\begin{split}
  \avgint_{Q_0}w^{1+r} &\leq w_{Q_0}^{1+r}+ \frac{r C_X)2[w]_{\infty}}{1+r}\left( \avgint_{Q_0}w\right)^{1+r} \\
                       &\leq \frac{r C_X 2[w]_{\infty}+1+r}{1+r}\left( \avgint_{Q_0}w\right)^{1+r}.
\end{split}\end{equation*}  Therefore, choosing $\ve$ in the mentioned range, we can
make the constant on the right hand side less than or equal to 2.

\end{proof}

The next lemma is a precise version of John-Nirenberg inequality in spaces of homogeneous type that will be very useful in the following results.

\begin{lemma}[John-Nirenberg inequality]\label{JohnNirenberg_lemma} There are absolute constants $0\leq \alpha_X < 1 < \beta_X$ such that

\begin{equation}\label{JohnNirenberg_ineq}
  \sup_Q \frac{1}{\mu(Q)} \int_Q \exp{\frac{\alpha_X}{||b||_{BMO}}|b(y)-b_Q|}d\mu(y) \leq \beta_X.
\end{equation}

In fact, we can take $\alpha_X=\ln{\sqrt[3]{2^{\ve}}}$, where $0<\ve<1$ is an absolute constant.
\end{lemma}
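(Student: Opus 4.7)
My plan is to carry out the classical iterated Calder\'on--Zygmund decomposition argument adapted to the dyadic grid on the space of homogeneous type, being careful to track how the sandwich property (item (5) of Theorem \ref{dyadic}) enters the constants so as to recover the explicit value $\alpha_X = \ln\sqrt[3]{2^{\ve}}$.

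\textbf{Step 1: Setup and normalization.} By homogeneity we may assume $\|b\|_{BMO}=1$. It suffices, by the dyadic--versus--ball comparison used throughout the paper, to establish \eqref{JohnNirenberg_ineq} with the supremum restricted to dyadic cubes $Q\in D$. Fix such a cube $Q_0$.

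\textbf{Step 2: One--step Calder\'on--Zygmund decomposition at level $\lambda=2$.} Consider the collection of maximal dyadic subcubes $\{Q_j^{(1)}\}\subset D$ of $Q_0$ for which $\frac{1}{\mu(Q_j^{(1)})}\int_{Q_j^{(1)}}|b-b_{Q_0}|\,d\mu>\lambda$. By Lebesgue differentiation (available in SHT by \cite{ACM}), $|b(x)-b_{Q_0}|\le\lambda$ a.e.\ outside $\bigcup_j Q_j^{(1)}$. Maximality and the sandwich property give
\[
\lambda<\frac{1}{\mu(Q_j^{(1)})}\int_{Q_j^{(1)}}|b-b_{Q_0}|\,d\mu\le\frac{\mu(\hat Q_j^{(1)})}{\mu(Q_j^{(1)})}\cdot\frac{1}{\mu(\hat Q_j^{(1)})}\int_{\hat Q_j^{(1)}}|b-b_{Q_0}|\,d\mu\le\frac{\lambda}{\ve},
\]
so in particular $|b_{Q_j^{(1)}}-b_{Q_0}|\le\lambda/\ve=2/\ve$. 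The BMO bound $\|b\|_{BMO}=1$ also yields
\[
\sum_j\mu(Q_j^{(1)})\le\frac{1}{\lambda}\int_{Q_0}|b-b_{Q_0}|\,d\mu\le\frac{\mu(Q_0)}{2}.
\]

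\textbf{Step 3: Iteration.} Repeating the construction inside each $Q_j^{(1)}$ with $b-b_{Q_j^{(1)}}$ in place of $b-b_{Q_0}$, and using the telescoping bound $|b_{Q^{(k)}}-b_{Q_0}|\le k\cdot\lambda/\ve$, we obtain after $k$ generations a family of cubes $\{Q_j^{(k)}\}$ with
\[
\sum_j\mu(Q_j^{(k)})\le 2^{-k}\mu(Q_0),\qquad |b(x)-b_{Q_0}|\le 2+(k-1)\cdot\tfrac{2}{\ve}\text{ a.e.\ off }\bigcup_j Q_j^{(k)}.
\]
Given $t\ge 2$, choosing $k$ to be the largest integer with $2+(k-1)\cdot 2/\ve\le t$ (so $k\ge\ve(t-2)/2$) yields the good--$\lambda$ estimate
\[
\frac{\mu(\{x\in Q_0:|b(x)-b_{Q_0}|>t\})}{\mu(Q_0)}\le 2\cdot 2^{-\ve t/2}\cdot 2^{\ve}.
\]

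\textbf{Step 4: Exponential integrability via layer cake.} Set $\alpha_X=\tfrac{\ve}{3}\ln 2=\ln\sqrt[3]{2^{\ve}}$, so that $\alpha_X<\tfrac{\ve}{2}\ln 2$ with a definite spectral gap $(\tfrac{\ve}{2}-\tfrac{\ve}{3})\ln 2=\tfrac{\ve}{6}\ln 2$. Then Cavalieri's principle gives
\[
\frac{1}{\mu(Q_0)}\int_{Q_0}e^{\alpha_X|b-b_{Q_0}|}d\mu\le e^{2\alpha_X}+\alpha_X\int_2^{\infty}e^{\alpha_X t}\cdot 2^{1+\ve}\cdot 2^{-\ve t/2}\,dt.
\]
The first term is bounded by $2^{2/3}$, and the integral in the second term converges thanks to the gap, contributing at most $\alpha_X\cdot 2^{1+\ve}/((\tfrac{\ve}{2}-\tfrac{\ve}{3})\ln 2)\le 2\cdot 2^{1+\ve}\le 8$. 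Thus $\beta_X$ is an absolute constant, uniform over $Q_0$, completing the proof.

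\textbf{Main obstacle.} The routine ingredients are standard, but matching the precise value $\alpha_X=\ln\sqrt[3]{2^{\ve}}$ requires two coordinated choices: picking $\lambda=2$ so that the halving $\sum\mu(Q_j^{(1)})\le\mu(Q_0)/2$ is exact, and sacrificing a factor of $2/3$ of the naive decay exponent $(\ve/2)\ln 2$ to get a quantitative gap ensuring $\beta_X$ is bounded independently of $\ve$. The delicate point is that \emph{both} the additive overshoot per step (controlled by $\lambda/\ve$ via the sandwich property) and the geometric decay of measures (controlled by $\lambda$) share the parameter $\lambda$, so the sharp constant in $\alpha_X$ is intrinsically tied to this single free parameter, and any loss in the sandwich constant $\ve$ propagates linearly.
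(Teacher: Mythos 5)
Your proof is correct, and it takes a genuinely different route from the paper's. The paper performs a \emph{single} Calder\'on--Zygmund stopping-time decomposition of $(b-b_{Q_0})\chi_{Q_0}$ at level $2\|b\|_{BMO}$ and then derives a \emph{self-referential (bootstrap) inequality}: letting $X(\alpha)=\sup_Q\frac{1}{\mu(Q)}\int_Q\exp\bigl(\frac{\alpha}{\|b\|_{BMO}}|b-b_Q|\bigr)d\mu$, the decomposition gives $\frac{1}{\mu(Q_0)}\int_{Q_0}e^{\alpha|b-b_{Q_0}|}d\mu\le e^{2\alpha}+\tfrac12 e^{(1/\ve+2)\alpha}X(\alpha)$, whence $X(\alpha)\bigl(1-\tfrac12 e^{(1/\ve+2)\alpha}\bigr)\le e^{2\alpha}$; this forces $\alpha<\tfrac{\ve\ln 2}{2\ve+1}$, and the paper then simply notes that $\alpha_X=\tfrac{\ve\ln 2}{3}=\ln\sqrt[3]{2^{\ve}}$ is an admissible smaller choice since $\ve<1$. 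You instead \emph{iterate} the stopping-time construction to obtain the superlevel-set (good-$\lambda$) estimate $\mu(\{|b-b_{Q_0}|>t\})/\mu(Q_0)\lesssim 2^{-\ve t/2}$, and then integrate by the layer-cake formula, picking $\alpha_X=\tfrac{\ve\ln 2}{3}$ deliberately to leave a definite gap $\tfrac{\ve\ln 2}{6}$ below the decay rate $\tfrac{\ve\ln 2}{2}$, which cancels the prefactor $\alpha_X$ and yields a $\beta_X$ independent of $\ve$. Both approaches rely on the same two ingredients --- the choice $\lambda=2$ to make the measure halving $\sum_j\mu(Q_j^{(1)})\le\mu(Q_0)/2$ exact, and the sandwich/child-measure bound $\mu(Q)\ge\ve\mu(\hat Q)$ to control the per-step overshoot $\lambda/\ve$ --- but the paper's bootstrap is more economical (no iteration, no tail integral), requiring only the a priori finiteness of $X(\alpha)$ for bounded $b$, while your iteration-plus-Cavalieri argument is more transparent and, notably, makes the choice $\alpha_X=\ln\sqrt[3]{2^{\ve}}$ look canonical rather than ad hoc, since the factor $1/3$ is exactly what is needed to decouple the final constant $\beta_X$ from $\ve$. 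Your account of the ``main obstacle'' (both the overshoot and the measure decay are governed by the single parameter $\lambda$) is accurate and in fact explains why the two methods, though structurally different, land on the same admissible $\alpha_X$.
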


\begin{proof}[Proof of Lemma \ref{JohnNirenberg_lemma}]
For the proof of this lemma we follow the scheme of proof showed in \cite{Jour}. Suppose that $b$ is bounded, so that the above supremum makes sense for all $\alpha$. Then we will prove \eqref{JohnNirenberg_ineq} with a bound independent of $||b||_{\infty}$.

\par Fix a cube $Q_0$ and a dyadic cube $Q\in\mathscr{D}(Q_0)$. Denote by $\tilde{Q}$ the parent of $Q$, namely, the unique element in $\mathscr{D}(Q_0)$ which contains $Q$ and lies in the previous generation of cubes.

\par Then we can show that

\begin{equation}\label{lemma_JN}
  \left|b_Q-b_{\tilde{Q}}\right| \leq \frac{1}{\ve} ||b||_{BMO},
\end{equation}

where $0<\ve<1$ is an absolute constant as in \cite{AV} (see also \cite{C} or \cite{HK} for further details). Indeed,

\begin{equation*}\begin{split}
  \left| b_Q - b_{\tilde{Q}} \right| &\leq \frac{1}{\mu(Q)}\int_Q \left| b - b_{\tilde{Q}}\right| \\
  &\leq \frac{\mu(\tilde{Q})}{\mu(Q)} \frac{1}{\mu(\tilde{Q})} \int_{\tilde{Q}} \left|b - b_{\tilde{Q}} \right| \\
  &\leq \frac{1}{\ve} ||b||_{BMO}.
\end{split}\end{equation*}

Next, consider the Calder\'on--Zygmund decomposition of $\left(b-b_{Q_0}\right)\chi_{Q_0}$ described in \cite[Thm. 2.7.]{ACM} for the level $2||b||_{BMO}$. Then there exists a collection of pairwise disjoint cubes $\{Q_i\}\subset \mathscr{D}$, maximal with respect to inclusion, satisfying

\begin{equation*}
  2||b||_{BMO} < \frac{1}{\mu(Q_i)}\int_{Q_i} \left| \left(b - b_{Q_0}\right)\chi_{Q_0} \right| < 2 C_X ||b||_{BMO}
\end{equation*}

and

\begin{equation*}
  \left| \left(b - b_{Q_0}\right)\chi_{Q_0} \right| < 2||b||_{BMO}, \text{  on $(\cup Q_i)^c$.}
\end{equation*}

Clearly, $Q_i\subset Q_0$ for each $j$, and

\begin{equation*}
  \mu(\cup Q_i) \leq \frac{||(b-b_{Q_0})\chi_{Q_0}||_{L^1}}{2||b||_{BMO}} \leq \frac{\mu(Q_0)}{2}.
\end{equation*}

Since the cubes $Q_i$ are maximal, we have that $(|b-b_{Q_0}|)_{\widetilde{Q_i}}\leq 2||b||_{BMO}$. Next, using the last inequality together with \eqref{lemma_JN} we get

\begin{equation*}
  |b_{Q_i}-b_{Q_0}| \leq |b_{Q_i}-b_{\widetilde{Q_i}}| + |b_{\widetilde{Q_i}}-b_{Q_0}| \leq \left(\frac{1}{\ve}+2\right)||b||_{BMO}.
\end{equation*}

 Denote $X(\alpha)=\sup_Q \frac{1}{\mu(Q)}\int_Q \exp{\frac{\alpha}{||b||_{BMO}}|b-b_Q|}d\mu(x)$, which is finite since we are assuming that $b$ is bounded. From the properties of the cubes $Q_i$ we arrive at

\begin{equation*}\begin{split}
  &\frac{1}{\mu(Q_0)}\int_{Q_0} \exp{\left(\frac{\alpha}{||b||_{BMO}}|b-b_{Q_0}|\right)}d\mu(x) \\
  &\leq \frac{1}{\mu(Q_0)}\int_{Q_0\setminus \cup Q_i} e^{2\alpha}d\mu(x) \\ &+ \sum_{j}\frac{\mu(Q_i)}{\mu(Q_0)}\frac{1}{\mu(Q_i)}\left(\int_{Q_i}\exp{\left(\frac{\alpha}{||b||_{BMO}} |b-b_{Q_i}|d\mu(x) \right)}e^{(\frac{1}{\ve}+2)\alpha}\right)\\
  &\leq e^{2\alpha} +\frac{1}{2} e^{(\frac{1}{\ve}+2)\alpha} X(\alpha).
\end{split}\end{equation*}

Taking the supremum over all cubes $Q_0$, we get the bound

\begin{equation*}
  X(\alpha) \left( 1 - \frac{1}{2} e^{(\frac{1}{\ve}+2)\alpha} \right) \leq  e^{2\alpha},
\end{equation*}

which implies that $X(\alpha)\leq C$, if $\alpha$ is small enough.

\par Since $0<\ve<1$, if we impose that $\frac{1}{2}e^{(\frac{1}{\ve}+2)\alpha}<1$, then $\alpha < \frac{\ve \ln{2}}{2\ve+1}$. Therefore we can choose an smaller parameter $\alpha$, such as $\alpha_X=\ln{\sqrt[3]{2^{\ve}}}$.
\end{proof}

Now we will prove two lemmas related to the $A_2$ and $A_{\infty}$ constants of a particular weight that we will need in the following, extended from those in \cite{HPR}.

\begin{lemma}\label{JNlemma}
There are absolute constants $\gamma$ and $c$ such that
\[  [we^{2Rez b}]_{A_2} \leq c[w]_{A_2}\]
for all
\[|z|\leq \frac{\gamma}{\|b\|_{BMO}([w]_{A_{\infty}}+[\sigma]_{A_{\infty}})},\]
where $\gamma=\max\{C_1\alpha_X, C_2\alpha_X\}$ with $C_1$ and $C_2$ absolute constants.
\end{lemma}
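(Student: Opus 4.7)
The plan is to estimate the $A_2$ characteristic of $we^{2\Re z\, b}$ cube by cube. Fix a cube $Q$, set $\sigma := w^{-1}$ (which lies in $A_\infty$ since we may assume $w\in A_2$), and write $u := 2\Re z\cdot b$. I need to control
\[
  \mathcal{A}(Q) \;:=\; \Bigl(\avgint_Q we^{u}\,d\mu\Bigr)\Bigl(\avgint_Q w^{-1}e^{-u}\,d\mu\Bigr).
\]
The first step is to apply H\"older's inequality to each factor with exponents tailored to the sharp reverse H\"older exponents supplied by Lemma \ref{SRHI}: use $(r,r')$ with $r=1+\tfrac{1}{\tau[w]_{A_\infty}}$ in the first integral and $(s,s')$ with $s=1+\tfrac{1}{\tau[\sigma]_{A_\infty}}$ in the second. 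Lemma \ref{SRHI} converts $\bigl(\avgint_Q w^r\bigr)^{1/r}\leq 2^{1/r}\avgint_Q w$, and analogously for $\sigma$. Multiplying and using $2^{1/r+1/s}\leq 4$ gives
\[
  \mathcal{A}(Q) \;\leq\; 4\,[w]_{A_2}\;\Bigl(\avgint_Q e^{r'u}\Bigr)^{1/r'}\Bigl(\avgint_Q e^{-s'u}\Bigr)^{1/s'}.
\]

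The second step is to peel off the mean of $u$ on $Q$: write $u=(u-u_Q)+u_Q$, factor out $e^{r'u_Q}$ and $e^{-s'u_Q}$ from their respective integrals, and observe that after taking the $1/r'$ and $1/s'$ powers the resulting factors $e^{u_Q}$ and $e^{-u_Q}$ cancel. Thus I am reduced to bounding
\[
  \Bigl(\avgint_Q e^{r'(u-u_Q)}\Bigr)^{1/r'}\Bigl(\avgint_Q e^{-s'(u-u_Q)}\Bigr)^{1/s'}.
\]
Using the crude inequality $e^{\pm t}\leq e^{|t|}$ together with $|u-u_Q|\leq 2|z||b-b_Q|$ bounds each factor by $\bigl(\avgint_Q e^{2r'|z||b-b_Q|}\bigr)^{1/r'}$ and the analogous $s'$ expression.

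The third step is to invoke Lemma \ref{JohnNirenberg_lemma}: provided $2r'|z|\leq\alpha_X/\|b\|_{BMO}$ and $2s'|z|\leq\alpha_X/\|b\|_{BMO}$, each average is at most $\beta_X$, and raising to powers $\leq 1$ only decreases it. Since $r'=1+\tau[w]_{A_\infty}\leq C_1[w]_{A_\infty}$ and $s'\leq C_2[\sigma]_{A_\infty}$ (the $+1$ is absorbed because $[w]_{A_\infty},[\sigma]_{A_\infty}\geq 1$), both John--Nirenberg hypotheses are met as soon as
\[
  |z|\;\leq\;\frac{\alpha_X}{2\max(C_1,C_2)\,\|b\|_{BMO}\bigl([w]_{A_\infty}+[\sigma]_{A_\infty}\bigr)},
\]
which is precisely the claimed range after setting $\gamma:=\alpha_X/(2\max(C_1,C_2))$. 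Combining everything and taking supremum over $Q$ yields $[we^{2\Re z b}]_{A_2}\leq 4\beta_X^{2}\,[w]_{A_2}$, so $c=4\beta_X^{2}$ works.

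The only real obstacle is bookkeeping: one must use \emph{two different} H\"older exponents $(r,s)$ calibrated separately to $[w]_{A_\infty}$ and $[\sigma]_{A_\infty}$ so that the resulting John--Nirenberg thresholds can be combined into a single condition involving the sum $[w]_{A_\infty}+[\sigma]_{A_\infty}$, and one must use the cube-based sharp reverse H\"older inequality of Lemma \ref{SRHI} rather than the weak ball-based version, since the latter would introduce a $w$-dependent comparison constant and destroy sharpness (exactly the point emphasized in the preliminaries).
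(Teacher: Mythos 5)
Your proof is correct and follows essentially the same strategy as the paper: apply H\"older with two separately calibrated exponents $r,s$ so that the cube-based sharp reverse H\"older inequality (Lemma~\ref{SRHI}) handles $w$ and $\sigma$ respectively, peel off $e^{\pm u_Q}$ so the two means cancel, and invoke the sharp John--Nirenberg inequality (Lemma~\ref{JohnNirenberg_lemma}) once $|z|$ is small enough to absorb $r'$ and $s'$. One small remark: the paper's displayed computation actually bounds $\bigl(\avgint_Q we^{\Re z\,b}\bigr)\bigl(\avgint_Q w^{-1}e^{-\Re z\,b}\bigr)$, i.e.\ it drops the factor of $2$ in the exponent relative to the lemma statement, whereas you carry $u=2\Re z\,b$ consistently; since this only rescales the admissible range of $|z|$ by an absolute constant (absorbed into $\gamma$), both versions give the stated conclusion, but your bookkeeping is the more faithful to the statement as written.
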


\begin{proof}[Proof of Lemma \ref{JNlemma}]
We will use the sharp reverse H\"older inequality twice, first for $r= 1+\frac{1}{\tau[w]_{A_{\infty}}}$ and then for $r=1+\frac{1}{\tau[\sigma]_{A_{\infty}}}$.  With the sharp reverse H\"older inequality for the first choice of $r$, H\"older's inequality and the sharp John-Nirenberg inequality \eqref{JohnNirenberg_ineq}, we have

\[\avgint_Qwe^{Rez b} \leq \left( \avgint_Qw^r\right) ^{1/r}\left( \avgint_Qe^{r'Rez(b-b_Q)}\right) ^{1/r'} e^{Rez b_Q}\]\[\leq \left( 2\avgint_{Q}w\right) \cdot \beta_X \cdot e^{Rez b_Q},\]

for $|z| \leq \frac{C_1 \alpha_X}{\|b\|_{BMO}[w]_{A_{\infty}}}.$  Note that the constant $\alpha_X$ comes from \eqref{JohnNirenberg_ineq} and $C_1$ is an absolute constant from the sharp reverse H\"older inequality since by our choice of $r$, $r' = C_1[w]_{A_{\infty}}$ (we can even calculate that $\tau [w]_{A_{\infty}} <r' \leq (\tau +1) [w]_{A_{\infty}}$.  We can also get a similar bound as above for the second choice of $r =  1+\frac{1}{\tau[\sigma]_{A_{\infty}}}$, giving us
\[  \avgint_Qw^{-1}e^{-Rez b}\leq \left( 2\avgint_{Q}w^{-1}\right) \cdot \beta_X\cdot e^{-Rez b_Q} \] for $|z| \leq \frac{C_2 \alpha_X}{\|b\|_{BMO}[\sigma]_{A_{\infty}}}.$
Multiplying these two estimates and taking supremum, we finish the proof by showing that for all $z$ as in the assumption
\[\left( \avgint_Qwe^{Rez  b}\right) \left( \avgint_Qw^{-1}e^{-Rez b}\right) \leq 4\beta_X^2[w]_{A_2}.\]
\end{proof}

We also have a similar lemma for the $A_{\infty}$ weight constant.

\begin{lemma}\label{JNlemma_Ainfy}
There are absolute constants $\gamma$ and $c$ such that
\[  [we^{2Rez b}]_{A_{\infty}} \leq c[w]_{A_{\infty}}\]
for all
\[|z|\leq \frac{\gamma'}{\|b\|_{BMO}([w]_{A_{\infty}})},\]
where we can take \[\gamma'=\frac{\alpha_X}{4\tau}\]  being $\tau$ an absolute constant from Lemma \ref{SRHI}.
\end{lemma}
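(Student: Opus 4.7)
My plan is to prove the bound via the equivalence between the Fujii--Wilson $A_\infty$ constant and the sharp reverse H\"older exponent: it suffices to show that $v := w e^{2\Re z\, b}$ satisfies
\[
\avgint_Q v^{1+s}\,d\mu \le K \left(\avgint_Q v\,d\mu\right)^{1+s}
\]
for every cube $Q$, with $K$ an absolute constant and $s$ a positive absolute multiple of $1/[w]_{A_\infty}$. The converse of Lemma \ref{SRHI} --- obtained by applying H\"older's inequality together with the $L^{1+s}$-boundedness of $M$ (whose operator norm is $\lesssim 1/s$ as $s\to 0^+$, by Marcinkiewicz interpolation between the weak $(1,1)$ and $L^\infty$ bounds) --- then yields $[v]_{A_\infty}\lesssim 1/s \approx [w]_{A_\infty}$, as desired. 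This mirrors the structure of Lemma \ref{JNlemma}, but since the Fujii--Wilson functional is one-sided, we avoid needing any hypothesis on $[\sigma]_{A_\infty}$.

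For the upper bound on $\avgint_Q v^{1+s}$ I would set $r:=1/(\tau[w]_{A_\infty}-1)$ as in Lemma \ref{SRHI}, choose $s$ as a small definite fraction of $r$, and apply H\"older with exponents $(1+r)/(1+s)$ and $(1+r)/(r-s)$. Lemma \ref{SRHI} controls the $w^{1+r}$ factor, while factoring $e^{2\Re z\, b} = e^{2\Re z\, b_Q}\cdot e^{2\Re z(b-b_Q)}$ brings the remaining factor into a form to which the sharp John--Nirenberg inequality (Lemma \ref{JohnNirenberg_lemma}) applies. The John--Nirenberg hypothesis requires $2(1+s)(1+r)/(r-s)\cdot|\Re z|\cdot\|b\|_{BMO} \le \alpha_X$; since $(1+r)/r = \tau[w]_{A_\infty}$ by the choice of $r$, the assumption $|z|\le \alpha_X/(4\tau[w]_{A_\infty}\|b\|_{BMO})$ is precisely what makes this hold. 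The outcome is
\[
\avgint_Q v^{1+s}\,d\mu \le C \bigl(e^{2\Re z\, b_Q} w_Q\bigr)^{1+s},
\]
with $C$ absolute.

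The main obstacle is establishing the matching lower bound $\avgint_Q v \gtrsim e^{2\Re z\, b_Q} w_Q$. Writing $\avgint_Q v = e^{2\Re z\, b_Q}\avgint_Q we^{2\Re z(b-b_Q)}$ and applying Jensen's inequality with the probability measure $w\,d\mu/w(Q)$ on $Q$, this reduces to showing $|\Re z|\cdot|\langle b\rangle_w - b_Q| = O(1)$, where $\langle b\rangle_w := \frac{1}{w(Q)}\int_Q bw\,d\mu$ is the $w$-weighted average. By H\"older with exponents $1+r$ and $(1+r)'$, Lemma \ref{SRHI} for $w^{1+r}$, and the $L^{(1+r)'}(Q)$ moment bound on $b-b_Q$ extracted from Lemma \ref{JohnNirenberg_lemma} (using that $(1+r)' = \tau[w]_{A_\infty}$), one obtains $|\langle b\rangle_w - b_Q| \lesssim \tau[w]_{A_\infty}\|b\|_{BMO}/\alpha_X$. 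The factor $4$ in the hypothesis $|z|\le \alpha_X/(4\tau[w]_{A_\infty}\|b\|_{BMO})$ is then exactly what is needed to make $|\Re z|\cdot|\langle b\rangle_w - b_Q|$ an absolute constant, giving $\avgint_Q v \ge c\, e^{2\Re z\, b_Q} w_Q$ and closing the reverse H\"older argument.
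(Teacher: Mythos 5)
Your argument is correct in its essential structure, and since the paper's own proof of this lemma consists of the single sentence ``The proof follows in a similar way as in \cite{CPP}, substituting the appropriate constants from the sharp John--Nirenberg inequality in Lemma \ref{JohnNirenberg_lemma} and the sharp reverse H\"older inequality in Lemma \ref{SRHI},'' what you have written is a self-contained reconstruction of that deferred argument rather than a genuinely different route.

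The three pieces of your plan each check out. The converse passage from a reverse H\"older inequality with exponent $1+s$ and absolute constant $K$ to $[v]_{A_\infty}\lesssim K^{1/(1+s)}/s$ is exactly the H\"older-plus-$\|M\|_{L^{1+s}}\lesssim 1/s$ computation you describe, and it is valid in a space of homogeneous type because $M$ is weak $(1,1)$ and $L^\infty$-bounded there. The upper bound on $\avgint_Q v^{1+s}$ via factoring out $e^{2\Re z\,b_Q}$, applying H\"older with exponents $(1+r)/(1+s)$ and $(1+r)/(r-s)$, Lemma \ref{SRHI} for $w^{1+r}$, and Lemma \ref{JohnNirenberg_lemma} for the exponential of $b-b_Q$ is correct. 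And the point you rightly flag as the main obstacle --- the matching lower bound $\avgint_Q v\gtrsim e^{2\Re z\,b_Q}w_Q$, which has no analogue in Lemma \ref{JNlemma} because there the $e^{\pm 2\Re z\,b_Q}$ factors cancel when the $w$ and $\sigma$ estimates are multiplied --- is handled correctly by the $w$-weighted Jensen inequality together with the $L^{(1+r)'}$ moment bound $\bigl(\avgint_Q|b-b_Q|^q\,d\mu\bigr)^{1/q}\lesssim q\|b\|_{BMO}/\alpha_X$ that one extracts from \eqref{JohnNirenberg_ineq} by Taylor expansion; with $(1+r)'=\tau[w]_{A_\infty}$ this gives $|\langle b\rangle_w-b_Q|\lesssim\tau[w]_{A_\infty}\|b\|_{BMO}/\alpha_X$ as you claim.

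The only caveat is cosmetic: with the specific choice ``$s$ a small definite fraction of $r$'' the constraint $2(1+s)\tfrac{1+r}{r-s}|\Re z|\|b\|_{BMO}\le\alpha_X$ produces $\gamma'=\tfrac{(1-c)\alpha_X}{4(1+c)\tau}$ for the fraction $c$ chosen, not exactly $\alpha_X/(4\tau)$. Since the lemma only asserts the existence of an absolute $\gamma'$ (``we can take'' is a normalization, and $\tau$ itself is only determined up to absolute factors), this does not affect the validity of your proof; it just means your constant differs from the paper's stated one by an absolute factor.
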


\begin{proof}[Proof of Lemma \ref{JNlemma_Ainfy}]
The proof follows in a similar way as in \cite{CPP}, substituting the appropriate constants from the sharp John Nirenberg inequality in Lemma \ref{JohnNirenberg_ineq} and the sharp reverse H\"older inequality in Lemma \ref{SRHI}.
\end{proof}

Next we will prove Theorem \ref{mixedsharp} where a mixed $A_2-A_{\infty}$ bound for Calder\'on--Zygmund operators in spaces of homogeneous type is obtained.  Due to Lerner's decomposition in spaces of homogeneous type from \cite{ACM}, we can fairly easily prove the mixed result.  The proof essentially follows from \cite{Hyt2}.  Only a brief sketch is given below.

\begin{proof}[Proof of Theorem \ref{mixedsharp}]
As stated in \cite[Sect. 2D]{Hyt2}, Theorem \ref{mixedsharp} follows from verifying the following testing conditions:
\begin{enumerate}
\item{$\|S_Q(\sigma\cdot \chi_Q)\|_{L^2(w)}\leq C_1\|\chi_Q\|_{L^2(\sigma)}$}
\item{$\|S_Q(w\cdot \chi_Q)\|_{L^2(\sigma)}\leq C_2\|\chi_Q\|_{L^2(w)}$}
\end{enumerate}
where \[S_Qf = \sum_{L\in S, L\subseteq Q}\left( \avgint_L f\right)\chi_L,\] and $S$ is a sparse family (this is a sparse operator).
This is still the case in spaces of homogeneous type due to the mentioned result of Treil \cite{T}, which holds in spaces of homogeneous type.

To verify the testing conditions, one simply follows the argument outlined in \cite[Sect. 5A]{Hyt2}.
\end{proof}

Finally, we can prove the results concerning to the commutator and its iterates.

\begin{proof}[Proof of Corollary \ref{commutator_thm}]

  Firstly, we start proving \eqref{comm1}.  Let us ``conjugate'' the operator $T$ as follows, that is, for any complex number $z$ we define

  \begin{equation}\label{Tz}
    T_z(f) = e^{zb} T(e^{-zb}f).
  \end{equation}

  By using the Cauchy integral theorem, we get for appropriate functions,

  \begin{equation*}
    T_b(f)=\frac{d}{dz}T_z(f)|_{z=0} = \frac{1}{2\pi i} \int_{|z|=\ve} \frac{T_z(f)}{z^2}d\mu(z), \text{\quad} \ve>0,
  \end{equation*}
  Therefore we can write

  \begin{equation*}\begin{split}
  \|T_b(f)\|_{L^2(w)} &= \left\| (2\pi i)^{-1}\int_{|z|=\ve}\frac{T(fe^{-zb})}{z^2} e^{zb}\right\|_{L^2(w)} \\
   &\leq \frac{1}{C\ve^2}\int_{|z|=\ve}\left( \int_X|T(fe^{-zb})e^{zb}|^2wd\mu(x)\right) ^{1/2}|d\mu(z)|\\
    &=\frac{C}{\ve}\|T(fe^{-zb})\|_{L^2(we^{2Rez b})} \\
    &\leq \frac{C}{\ve}[we^{2Rez b}]_{A_2}^{1/2}([we^{2Rezb}]_{A_{\infty}}+[\sigma e^{2Rezb}]_{A_{\infty}})^{1/2} \\
    &\times \left( \int_X|fe^{-zb}|^2we^{2Rezb}d\mu\right) ^{1/2} \\
    &= \frac{C}{\ve}[we^{2Rez  b}]_{A_2}^{1/2}([we^{2Rez b}]_{A_{\infty}}+[\sigma e^{2Rez b}]_{A_{\infty}})^{1/2} \\
    &\times \left( \int_X|f|^2wd\mu(x) \right) ^{1/2},
  \end{split}\end{equation*}
  where we have used the Minkowski inequality for integrals and the $A_2$ theorem for spaces of homogeneous type \cite{AV}.
  We also have
  \begin{equation}
  \label{weightexponential}
  [we^{2Rez b}]_{A_2} \leq c[w]_{A_2}
  \end{equation}
   and similarly for the $A_{\infty}$ constants,
   for all $|z|\leq \frac{\delta}{\|b\|_{BMO}([w]_{A_{\infty}}+[\sigma]_{A_{\infty}})}$ where the $\delta$ is the minimum of the absolute constants from the corresponding lemmas.

  All that remains is to bound

  \[\frac{C}{\ve}[w]_{A_2}\|f\|_{L^2(w)}.\]

 Since $|z| = \ve$ we are restricted to certain $\ve$ by \eqref{weightexponential}, so we choose $\ve = \frac{\gamma}{\|b\|_{BMO}([w]_{A_{\infty}}+[\sigma]_{A_{\infty}})}$, so that \[\frac{1}{\ve} = \frac{1}{\delta}([w]_{A_{\infty}}+[\sigma]_{A_{\infty}})\|b\|_{BMO}\] as wanted.

 Putting everything together gives us the desired bound

 \begin{equation*}
    ||T_b(f)||_{L^2(w)} \leq C [w]_{A_2}^{1/2} ([w]_{A_{\infty}}+[\sigma]_{A_{\infty}})^{3/2} \|b\|_{BMO} ||f||_{L^2(w)}.
 \end{equation*}

 Finally, to prove the general estimate \eqref{comm2}, we use again the Cauchy integral theorem to write the k-th commutator for appropriate functions as

  \begin{equation*}
    T^k_b(f)= \frac{d^k}{dz^k}T_z(f)|_{z=0} = \frac{k!}{2\pi i} \int_{|z|=\ve} \frac{T_z(f)}{z^{k+1}}d\mu(z), \text{\quad} \ve>0,
  \end{equation*}

  where $T_z$ is defined as in \eqref{Tz}. Then, following the computation for $T_b$ we can arrive at the desired bound for $T_b^k$.

\end{proof}

\begin{remark}
  Corollary \ref{commutator_thm} can be proved under the weaker assumption that $T$ is a linear operator that satisfies the sharp weak mixed $A_2-A_{\infty}$ in spaces of homogeneous type.
\end{remark}

%
%

\thanks{\textbf{Acknowledgments.}{The authors would like to express their gratitude to Prof. Carlos P\'erez for helpful comments and motivating discussions in Seville, Spain.  The first author would like to thank Tuomas Hyt\"onen for suggestions and the opportunity to enjoy the fantastic working environment at the University of Helsinki.  The second author would also like to thank Prof. Jill Pipher for the invitation to continue this collaboration with the first author at Brown University, Providence.}

%
%
%

\end{document}